\documentclass[reqno, 10pt]{amsart}
\usepackage{amssymb,mathrsfs}
\usepackage[usenames, dvipsnames]{color}
\usepackage{esint}
\usepackage[colorlinks=true,linkcolor=blue,citecolor=red,urlcolor=black]{hyperref}
\usepackage{todonotes}
\usepackage{verbatim}
\usepackage{enumerate}
\usepackage{bbm}

\usepackage{cite}
\usepackage[nobysame,non-sorted-cites,initials]{amsrefs}
\def\MR#1{}

\theoremstyle{plain}
\newtheorem{theorem}{Theorem}[section]
\newtheorem{lemma}[theorem]{Lemma}

\newtheorem{proposition}[theorem]{Proposition}

\theoremstyle{definition}

\theoremstyle{remark}
\newtheorem{remark}[theorem]{Remark}

\newcommand{\dv}{\operatorname{div}}
\newcommand{\osc}{\operatorname{osc}}

\numberwithin{equation}{section}

\newcommand{\bR}{\mathbb{R}}

\newcommand{\bZ}{\mathbb{Z}}
\newcommand{\bS}{\mathbb{S}}

\newcommand\cD{\mathcal{D}}

\newcommand\cG{\mathcal{G}}

\newcommand\cQ{\mathcal{Q}}

\newcommand\sA{\mathscr{A}}

\makeatletter
\def\dashint{\operatorname%
	{\,\,\text{\bf--}\kern-.98em\DOTSI\intop\ilimits@\!\!}}
\makeatother

\newcommand\dashnorm[2]{\nparallel\kern-.2em #1 \Vert_{#2}}

\begin{document}
	\title[The conductivity problem with imperfect bonding interfaces]{The conductivity problem with imperfect bonding interfaces and finite internal conductivities}
	
	\author[H. Dong]{Hongjie Dong}
	
	\author[Z. Yang]{Zhuolun Yang}
	
	\author[H. Zhu]{Hanye Zhu}
	
	\address[H. Dong]{Division of Applied Mathematics, Brown University, 182 George Street, Providence, RI 02912, USA}
	\email{Hongjie\_Dong@brown.edu}

	\address[Z. Yang]{Department of Mathematics, The Ohio State University, 231 West 18th Avenue, Columbus, OH 43210, USA}
	\email{yang.8242@osu.edu}

	\address[H. Zhu]{Department of Mathematics, Duke University, 120 Science Drive, Durham, NC 27708, USA}
	\email{hanye.zhu@duke.edu}
	
	\thanks{H. Dong was partially supported by the NSF under grant DMS-2350129}
	\thanks{Z. Yang was partially supported by the NSF under grant DMS-2550221}
	\thanks{H. Zhu was partially supported by the NSF under grants DMS-2006372 and DMS-2306726}
	
	\subjclass[2020]{35B44, 35J25, 35Q74, 74E30, 74G70}
	
	\keywords{Optimal gradient estimates, high contrast coefficients, Robin boundary condition, imperfect bonding interfaces, conductivity of composite media}
	
	\begin{abstract}
		We study the field concentration phenomenon between two closely spaced inclusions with imperfect bonding interfaces of low conductivity type. The inclusions are assumed to have finite conductivities. The problem is governed by a system of elliptic equations coupled with Robin-type boundary conditions.  While it is known that finite-conductivity inclusions with ideal interfaces yield bounded gradients, in this paper we show that with imperfect bonding interfaces, the gradient of the solution may blow up as $\varepsilon$ (the distance between two inclusions) tends to zero when the bonding parameter $\gamma$ is large, while it is uniformly bounded independently of $\varepsilon$ when the bonding parameter $\gamma$ is small. Moreover, we identify the threshold of $\gamma$ and the optimal blow-up rates under certain symmetry assumptions. Compared to the case when the inclusions are perfect conductors, we find a novel logarithmic blow-up phenomenon at the critical value of $\gamma$.
	\end{abstract}
	
	\maketitle
	
	\section{Introduction}
	\subsection{Background}
	The study of stress and electric field concentrations in composite media with closely spaced inclusions has attracted extensive attention over the past few decades due to its strong physical background and mathematical challenges. When inclusions with extreme material properties are located close to each other, the narrow gap between them often gives rise to a significant build-up of stress or field intensity, which may lead to material failure (see \cites{BASL,Kel}) or field enhancement (see \cites{acsphotonics,KANG20191670}). Quantitative analysis of these field concentrations traces back to the pioneering work of Keller \cite{Keller} on the effective conductivity of dense arrays of cylinder or sphere packings. Over the past three decades, substantial progress has been made in quantitatively characterizing the field concentration for perfectly bonded inclusions (i.e., with continuity of potential and normal flux across interfaces): optimal estimates  and asymptotic characterizations near touching regimes have been established.
	
	However, in real-world applications, the interface between the matrix and the inclusions is rarely ideal. Physical phenomena such as thermal or electrical contact resistance, surface roughness, coating layers, or partial debonding give rise to imperfect bonding conditions. It is known in the physics literature that interfacial effects play a significant role in a variety of systems. For example, they may dramatically alter the effective properties (\cites{prl,LipVer,CHIEW198390,BenY,EVERY1992123,HASHIN1992767}). As emphasized in \cite{prl}, the study of such interfacial effects is challenging both experimentally and theoretically due to the complexity of the underlying microstructure. A widely accepted mathematical framework models these non-ideal interfaces via transmission conditions of low-conductivity type (LC-type, see \cites{fukushima2024finiteness,BenMil}). Under such transmission conditions, the flux remains continuous across the interfacial boundary while the potential experiences a jump discontinuity proportional to the flux. The field concentration phenomenon for perfect conductors with LC-type non-ideal bonding interfaces was systematically analyzed in our previous work \cite{DongYangZhu26} (see also \cites{fukushima2024finiteness,DongLiZhao25}), where a fundamental dichotomy was established: for small interfacial resistance, the local field remains uniformly bounded independently of the distance between the inclusions, whereas for large interfacial resistance, the local field may become arbitrarily large as the distance approaches 0. 
	Despite these advances, the full physical picture remains incomplete. Previous investigations into LC-type imperfect interfaces \cites{fukushima2024finiteness, DongYangZhu26, DongLiZhao25} assumed infinite internal conductivity. 
	It was unclear whether finite internal conductivities would completely eliminate field concentration under LC-type transmission conditions, or if a critical transition would persist.
	In this work, we address this fundamental problem by establishing optimal gradient estimates for inclusions with finite conductivities and LC-type imperfect interfaces.
	
	Let us describe the mathematical framework. Since our primary interest is the behavior of the electric field in the narrow region between two closely spaced inclusions, we formulate the problem in a domain containing two inclusions. Let $n\geq 2$, and let $\Omega\subset \bR^n$ contain two relatively strictly convex open sets $\cD_1$ and $\cD_2$ satisfying
	\[
	\operatorname{dist}(\cD_1\cup \cD_2,\partial\Omega)>1.
	\]
	We denote the distance between the two inclusions by
	\[
	\varepsilon:=\operatorname{dist}(\cD_1,\cD_2),
	\]
	and set
	\[
	\widetilde{\Omega}:=\Omega\setminus\overline{(\cD_1\cup \cD_2)}.
	\]
	When the inclusions are perfectly bonded to the surrounding matrix, the conductivity problem is described by the elliptic equation
	\begin{equation*}
		\begin{cases}
			\operatorname{div}(a(x)\nabla u)=0 & \text{in }\Omega,\\
			u=\varphi(x) & \text{on }\partial\Omega,
		\end{cases}
	\end{equation*}
	where the conductivity distribution is given by
	\[
	a=k_1\chi_{\cD_1}+k_2\chi_{\cD_2}+\chi_{\widetilde{\Omega}}.
	\]
	Here, $u$ denotes the voltage potential, while $-\nabla u$ represents the associated electric field. A fundamental characteristic of perfect bonding is the continuity of both the potential and the normal flux across the inclusion boundaries:
	\begin{equation}\label{transmission}
		u|_+=u|_-,\qquad
		\left.\frac{\partial u}{\partial\nu}\right|_+
		=k_i\left.\frac{\partial u}{\partial\nu}\right|_-
		\quad\text{on }\partial \cD_i.
	\end{equation}
	Throughout, the subscripts $+$ and $-$ indicate the limiting values from the exterior and interior of the inclusion, respectively, and $\nu$ denotes the inward normal vector on $\partial \cD_1\cup\partial \cD_2$.
	
	When the conductivities $k_i$ remain uniformly bounded away from both zero and infinity, it was established in \cites{BV,LV,LN} that the gradient of the solution remains uniformly bounded independently of the distance $\varepsilon$ between the inclusions. In contrast, when the conductivities degenerate toward either infinity or zero, the gradient may become unbounded as $\varepsilon\to0$. 
	
	In the perfectly conducting regime $k_1=k_2=\infty$, the problem reduces to
	\begin{equation*}
		\begin{cases}
			\Delta u=0 & \text{in }\widetilde{\Omega},\\
			u=U_j & \text{on }\partial \cD_j,\quad j=1,2,\\
			\displaystyle\int_{{\partial \cD_j}}\partial_\nu u\,d\sigma=0 & j=1,2,\\
			u=\varphi & \text{on }\partial\Omega,
		\end{cases}
	\end{equation*}
	where $U_j$ are constants determined by the zero-flux conditions.
	The optimal estimates for the gradient in the narrow region have been established in a number of works \cites{AKLLL,AKL,Y1,Y2,BLY1,BLY2}. More precisely,
	\begin{equation*}
		\begin{cases}
			\|\nabla u\|_{L^\infty(\widetilde{\Omega})}
			\le C\varepsilon^{-1/2}\|\varphi\|_{C^2(\partial\Omega)},
			& n=2,\\[2mm]
			\|\nabla u\|_{L^\infty(\widetilde{\Omega})}
			\le C|\varepsilon\ln\varepsilon|^{-1}\|\varphi\|_{C^2(\partial\Omega)},
			& n=3,\\[2mm]
			\|\nabla u\|_{L^\infty(\widetilde{\Omega})}
			\le C\varepsilon^{-1}\|\varphi\|_{C^2(\partial\Omega)},
			& n\ge4.
		\end{cases}
	\end{equation*}
	These estimates are known to be optimal.
	
	The opposite limiting regime, corresponding to insulating inclusions with $k_1=k_2=0$, leads to
	\begin{equation*}
		\begin{cases}
			\Delta u=0 & \text{in }\widetilde{\Omega},\\
			\partial_\nu u=0 & \text{on }\partial \cD_j,\quad j=1,2,\\
			u=\varphi & \text{on }\partial\Omega.
		\end{cases}
	\end{equation*}
	While the optimal gradient blow-up rate in two dimensions was established approximately two decades ago in \cites{AKL,AKLLL}, the corresponding optimal rates in dimensions $n\ge3$ have only recently been obtained in \cites{DLY,DLY2,li2024optimalgradientestimatesinsulated}. These higher-dimensional rates are closely related to the first nonzero eigenvalue of an elliptic operator on $\bS^{n-2}$, with the operator determined by the principal curvatures of the inclusions. More recently, for the mixed case $k_1=0$ and $k_2=\infty$, it was shown in \cites{JiKang,DonYan23} that the gradient remains uniformly bounded independently of $\varepsilon$. Thus, the field concentration phenomenon under perfect bonding, including the determination of optimal blow-up rates, is now relatively well understood. A more comprehensive account of the existing literature can be found in the survey articles \cites{Kang,DY26}.
	
	The situation becomes substantially different when the interfaces between the inclusions and the surrounding matrix are non-ideal. In this setting, at least one of the classical transmission conditions \eqref{transmission} fails. In this work, we focus on imperfect interfaces of low-conductivity type (LC-type). Such an interface can be viewed as the limiting model of an inclusion surrounded by a thin shell of low conductivity as the shell thickness tends to zero. The limiting transmission conditions take the form
	\begin{equation}\label{transmission_new}
		\left.\frac{\partial u}{\partial\nu}\right|_+
		={k_i}\left.\frac{\partial u}{\partial\nu}\right|_-
		=-\gamma^{-1}(u|_+-u|_-)\quad {\text{on }\partial\cD_i,\quad i=1,2}.
	\end{equation}
	We refer to this as an imperfect interface of LC-type, with $\gamma>0$ representing the bonding parameter. Unlike the perfectly bonded case, the potential is no longer continuous across the interface.
	There is also another type of imperfect interface, known as the high-conductivity type (HC-type), for which continuity of the flux is lost instead. In two dimensions, the HC-type problem is dual to the LC-type problem. Detailed derivations and discussions of these imperfect transmission conditions can be found in \cites{fukushima2024finiteness,BenMil}.
	
	The case when both inclusions $\cD_1$ and $\cD_2$ have {LC-type imperfect interfaces and} their conductivities $k_1=k_2=\infty$ has been studied very recently. Under these assumptions, the conductivity problem becomes
	\begin{equation*}
		\begin{cases}
			\Delta u=0 & \text{in }\widetilde{\Omega},\\
			u+\gamma\partial_\nu u=U_j & \text{on }\partial \cD_j,\quad j=1,2,\\
			\displaystyle\int_{\partial \cD_j}\partial_\nu u\,d\sigma=0 & j=1,2,\\
			u=\varphi & \text{on }\partial\Omega,
		\end{cases}
	\end{equation*}
	where the constants $U_j$ are determined by the zero-flux conditions. Fukushima et al. \cite{fukushima2024finiteness} investigated this model for two equal-sized disks centered on the $x_2$ axis in $\Omega=\mathbb{R}^2$, subject to the far-field condition $\varphi=x_2$. They established that the gradient remains uniformly bounded, independently of $\varepsilon$. This is in sharp contrast to the corresponding perfect conductivity problem with ideal interface, where the gradient blows up as $\varepsilon\to0$. In our recent work \cite{DongYangZhu26}, we discovered an unexpected {dichotomy in field concentration} depending on the size of $\gamma$. Specifically, we showed that the gradient of the solution is uniformly bounded independently of $\varepsilon$ when $\gamma$ is sufficiently small. However, the gradient may blow up when $\gamma$ is large. Moreover, under certain symmetry assumptions, the critical threshold was identified as $\gamma=1/\mu$, where $\mu$ is the relative curvature at the narrowest point of the gap defined as in \eqref{fg_2} below. The gradient was shown to be uniformly bounded when $\gamma\le 1/\mu$, and the optimal gradient blow-up rate was established when $\gamma>1/\mu$.
	
	However, in realistic physical applications, composite inclusions may possess finite conductivities as well as non-ideal interfaces.
	While it is known that finite-conductivity inclusions with ideal interfaces yield bounded gradients (\cites{BV,LV,LN}) and {perfectly conducting inclusions} with LC-type imperfect interfaces yield a threshold-governed blow-up (\cites{DongYangZhu26}{)}, the combined interaction between finite internal conductivities ($0 < k_1, k_2 < \infty$) and interfacial resistance ($\gamma>0$) remained unexplored. In particular, it is unclear whether the interaction between finite conductivity and
	interfacial resistance suppresses, modifies, or induces new resonance patterns in
	field concentration.
	
	In this work, we address this fundamental question by establishing optimal gradient estimates for inclusions with finite conductivities $k_1, k_2\in(0,\infty)$ subject to LC-type transmission conditions \eqref{transmission_new}. We reveal that while the critical threshold $\gamma = 1/\mu$ persists, the interplay between finite internal conductivity and interfacial resistance induces a novel logarithmic resonance phenomenon at $\gamma = 1/\mu$, bridging the gap between ideal interfaces, infinite conductivity models, and realistic composite systems.
	
	\subsection{Problem setup and notation}
	
	Throughout this article, we assume that
	$$
	\gamma \ge \gamma_0 > 0, \quad \mbox{and} \quad k_1,k_2 \ge k_0 > 0.
	$$
	We consider the conductivity problem
	\begin{equation}\label{robin}
		\begin{cases}
			\Delta u_+ =0&\mbox{in}~\widetilde\Omega,\\
			\Delta u_- =0&\mbox{in}~\mathcal{D}_1\cup\mathcal{D}_2,\\ \partial_\nu u_+=k_i \partial_\nu u_-= -\gamma^{-1}(u_+-u_-)&\mbox{on}~\partial \mathcal{D}_i, \quad i=1,2,\\
			u_+= \varphi&\mbox{on}~\partial\Omega.
		\end{cases}
	\end{equation}
	where $\nu$ is the unit normal vector {pointing into} $\mathcal{D}_i$ on $\partial\mathcal{D}_i$.  We say that $u$ is a piecewise $H^1$ solution to \eqref{robin} if
	$$
	u = \begin{cases}
		u_+ &\mbox{in}~\widetilde\Omega,\\
		u_- &\mbox{in}~\mathcal{D}_1\cup\mathcal{D}_2,
	\end{cases}
	$$
	with $u_+ \in H^1 (\widetilde\Omega)$ and $u_- \in H^1(\mathcal{D}_1\cup\mathcal{D}_2)$ satisfying equation \eqref{robin}.
	
	We use the notation $x = (x', x_n)$, where $x' \in \bR^{n-1}$. We {choose coordinates} so that near the origin, {the portions of $\partial \cD_1$ and $\partial \cD_2$}, denoted by $\Gamma_+$ and $\Gamma_-$, are respectively the graphs of two $C^{1,1}$ functions in terms of $x'$. That is, for some $R_0 > 0$,
	\begin{align*}
		\Gamma_+ =& \left\{ x_n = \frac{\varepsilon}{2}+h_1(x'),~|x'|<R_0\right\},\\
		\Gamma_- =& \left\{ x_n = -\frac{\varepsilon}{2}+h_2(x'),~|x'|<R_0\right\},
	\end{align*}
	where $h_1(x')$ and $h_2(x')$ are $C^{1,1}$ functions satisfying
	\begin{equation}\label{fg_0}
		h_1(0')=h_2(0')=0,\quad \nabla_{x'}h_1(0')=\nabla_{x'}h_2(0')=0,
	\end{equation}
	\begin{equation}\label{fg_1}
		c_1 |x'|^2 \le h_1(x')-h_2(x')\quad\mbox{for}~~0\le |x'|<R_0,
	\end{equation}
	\begin{equation}\label{def:c_2}
		\|h_1\|_{C^{1,1}}+\|h_2\|_{C^{1,1}}\leq c_2,
	\end{equation}
	{with positive constants $c_1$ and $c_2$.} For $x_0 \in \widetilde\Omega$ with $|x_0'|< R_0$, $0 < r\leq R_0 - |x_0'|$, we denote
	\begin{align*}
		\Omega_{x_0,r}:=\left\{(x',x_{n})\in \widetilde{\Omega}:~-\frac{\varepsilon}{2}+h_2(x')<x_{n}<\frac{\varepsilon}{2}+h_1(x'),~|x'-x_0' |<r\right\}
	\end{align*}
	and $\Omega_{r}:=\Omega_{0,r}$.  For any domain $\cD$, we denote the H\"older {seminorm} and H\"older norm of a function $u$ by
	$$
	[u]_{C^\alpha(\cD)}:= \sup_{\substack{x,y \in \cD\\x \neq y}}\frac{|u(x)-u(y)|}{|x-y|^\alpha}, \quad \|u\|_{C^\alpha(\cD)}:= [u]_{C^\alpha(\cD)} + \|u\|_{L^\infty(\cD)}.$$
	We denote a partial H\"older seminorm and the corresponding norm by
	$$
	[u]_{C_{x'}^\alpha(\cD)}:= \sup_{\substack{x,y \in \cD\\x \neq y, x_n = y_n}}\frac{|u(x)-u(y)|}{|x-y|^\alpha}, \quad \|u\|_{C_{x'}^\alpha(\cD)}:= [u]_{C_{x'}^\alpha(\cD)} + \|u\|_{L^\infty(\cD)}.
	$$
	We also denote the averaged $L^p$ norm of $u$ over $\mathcal{D}$ by
	$$
	\dashnorm{u}{L^p(\cD)}:= \left( \fint_{\cD} |u|^p \right)^{1/p},
	$$
	and the average of $u$ over $\mathcal{D}$ by
	$$
	(u)_{\cD} : = \fint_{\cD} u.
	$$
	{We also consider} the following problem near the origin:
	\begin{equation}\label{main_problem_narrow}
		\begin{cases}
			\Delta u =0&\mbox{in}~\Omega_{R_0},\\
			u + \gamma \partial_\nu u = g_1&\mbox{on}~\Gamma_+,\\
			u + \gamma \partial_\nu u = g_2&\mbox{on}~\Gamma_-,
		\end{cases}
	\end{equation}
	where $\nu$ is the unit normal vector pointing upward on $\Gamma_+$ and pointing downward on $\Gamma_-$. Since $g_1, g_2$ are only defined on $\Gamma_\pm$ respectively, they are functions only in terms of $x'$, and {we write}
	$$
	g_1(x') = g_1\Big(x',\frac{\varepsilon}{2} + h_1(x') \Big) \quad \mbox{and} \quad g_2(x') = g_2\Big(x',-\frac{\varepsilon}{2} + h_2(x') \Big)
	$$
	for simplicity. {For boundary data, all $C^\beta_{x'}$ seminorms compare the data at arbitrary distinct tangential points using their graph representations.}

	When considering the global problem \eqref{robin}, we use $g_1$ and $g_2$ to denote $u_-|_{\Gamma_+}$ and $u_-|_{\Gamma_-}$ near the origin respectively.
	
	Throughout this article, we say that a constant $C$ is universal if it depends only on $n$, $\gamma_0$, $k_0$, $\mu$, $\cD_1$, $\cD_2$, and $\Omega$. In particular, a universal constant $C$ is independent of $\varepsilon$.
	
	\subsection{Main results}
	
	In the main results below, we further assume ${\cD_1, \cD_2}$ to be $C^{2,\sigma}$ for some $\sigma \in (0,1)$. In addition to \eqref{fg_0} and \eqref{def:c_2}, the corresponding graph functions $h_1$ and $h_2$ will satisfy
	\begin{equation}\label{fg_2}
		h_1(x') - h_2(x') = \mu |x'|^2 + O(|x'|^{2+\sigma}),
	\end{equation}
	and
	\begin{equation}\label{fg_3}
		\nabla_{x'} h_1(x')-\nabla_{x'} h_2(x')=2\mu x'+O(|x'|^{1+\sigma}),
	\end{equation}
	for $0\le|x'|<R_0,~ \mu > 0$.
	
	For our first main result below, 
	we also assume that for some $1 \le j \le n-1$, the domain $\widetilde\Omega$ is symmetric with respect to $x_j$, {and the boundary datum $\varphi \in C^{1,1}$ is odd} in $x_j$. 
	We define
	\begin{equation}
		\label{alpha_n}
		\alpha=\alpha(n,\gamma, \mu):= \frac {-(n-1)+\sqrt{(n-1)^2+4(n-2 +  2/(\mu\gamma))}}{2}.
	\end{equation}
	Note that $\alpha$ is always positive, and $\alpha <1$ if and only if $\gamma >  1/\mu$.

	\begin{theorem}\label{thm_upperbound}
		For $n \ge 2$, $\varepsilon \in (0,R_0/4)$, $\gamma > 0$, $\mu > 0$, let $h_1,h_2$ satisfy \eqref{fg_0}, \eqref{def:c_2}, \eqref{fg_2}, and \eqref{fg_3}. { Let $\Omega_{R_0}$ be symmetric in $x_j$ and let $u$ be the solution of \eqref{robin} that is odd in $x_j$ for some $1 \le j \le n-1$.} Then there exists a positive constant $C$, depending only on $n$, $\gamma$, $k_0$, $\mu$, $R_0$, $\|h_1\|_{C^{2,\sigma}}$, and $\|h_2\|_{C^{2,\sigma}}$, such that 
		\begin{itemize}
			\item when $0 < \gamma < 1/\mu$,
			\begin{equation}\label{grad_u_upperbound1}
				|\nabla u(x)| \le C \|\varphi\|_{L^\infty(\partial\Omega)}\quad \mbox{in}~ \Omega_{R_0/2},
			\end{equation}
			\item when $\gamma =1/\mu$,
			\begin{equation}\label{grad_u_upperbound3}
				|\nabla u(x)| \le C \|\varphi\|_{L^\infty(\partial\Omega)}\left( |\log (\varepsilon+|x'|^2)| \Big( \frac{1}{k_1} + \frac{1}{k_2} \Big) + 1 \right) \quad \mbox{in}~ \Omega_{R_0/2},
			\end{equation}
			\item when $\gamma >1/\mu$,
			\begin{equation}\label{grad_u_upperbound2}
				|\nabla u(x)| \le C \|\varphi\|_{L^\infty(\partial\Omega)}(\varepsilon + |x'|^2)^{\frac{\alpha-1}{2}} \quad \mbox{in}~  \Omega_{R_0/2},
			\end{equation}
		\end{itemize}
		where $\alpha$ is given in \eqref{alpha_n}.
	\end{theorem}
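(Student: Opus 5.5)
Theorem~\ref{thm_upperbound} reduces to the local problem \eqref{main_problem_narrow} in the narrow region, with data $g_i=u_-|_{\Gamma_\pm}$. The task is then to show that $g_1,g_2$ are regular enough for the perfect-conductor analysis of \cite{DongYangZhu26} to run as a perturbation.

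\emph{Step 1: global bounds.} The energy identity for \eqref{robin} gives $\int_{\widetilde\Omega}|\nabla u_+|^2+\sum_i k_i\int_{\cD_i}|\nabla u_-|^2+\gamma^{-1}\int_{\partial\cD_i}|u_+-u_-|^2\le C\|\varphi\|^2$. The maximum principle (comparing constants with Robin coupling) gives $|u|\le\|\varphi\|_{L^\infty}$. Oddness in $x_j$ is inherited by $u_\pm$, so $u_\pm=0$ on $\{x_j=0\}$ and $g_i(0')=0$.

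\emph{Step 2: regularity of the interior traces.} Since $\Delta u_-=0$ in $\cD_i$ with Neumann data $k_i^{-1}\partial_\nu u_+$, I would show that $g_i\in C^{1,\beta}_{x'}$ near the origin with $|\nabla_{x'}g_i|\le C(1+k_i^{-1}\sup|\nabla u_+|)$. The factor $1/k_i$ and the gradient of $u_+$ enter here; this coupling is what will produce the $1/k_1+1/k_2$ in \eqref{grad_u_upperbound3}. More precisely, I expect a weighted estimate
\[
|\nabla_{x'}g_i(x')|\le C\Big(1+k_i^{-1}\int_{|y'|<R_0}\frac{|\nabla u_+(y')|}{|x'-y'|^{n-1}}\,dy'\Big),
\]
obtained from the Neumann-to-Dirichlet map of the convex inclusion.

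\emph{Step 3: the narrow-region problem, and closing the loop.} For \eqref{main_problem_narrow} I would use the result of \cite{DongYangZhu26} in the odd case: averaging in $x_n$ gives $\bar u(x')$, which to leading order solves the degenerate ODE/PDE
\[
\dv_{x'}\big((\varepsilon+\mu|x'|^2)\nabla_{x'}\bar u\big)-\tfrac{2}{\gamma}\bar u=-\tfrac{1}{\gamma}(g_1+g_2).
\]
Its homogeneous odd solutions behave like $r^{\alpha}Y(\theta)$, with $\alpha$ as in \eqref{alpha_n}: the first odd spherical harmonic has eigenvalue $n-2$, and the indicial equation is $\alpha^2+(n-1)\alpha-(n-2)-2/(\mu\gamma)=0$. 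The forcing $g_1+g_2$ vanishes at $0$ and is Lipschitz, so it acts like an $r^1$ source. For $\alpha>1$ the particular solution is $O(r)$ and the gradient is bounded. For $\alpha<1$ the homogeneous part $r^\alpha$ dominates, giving \eqref{grad_u_upperbound2}. At $\alpha=1$ there is resonance, and the particular solution is $r\log r$, with coefficient proportional to $\nabla g_i(0)$. Technically I would run a Campanato-type iteration on $\Omega_{x_0,r}$ with $r\sim\varepsilon+|x_0'|^2$, with the $x_n$-direction handled by rescaled boundary estimates for the Robin problem, and then feed in Step~2.

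\emph{Main obstacle.} The hard part is the bootstrap between Steps~2 and~3 in the critical case. The gradient bound for $u_+$ involves $\|g_i\|_{C^{1,\beta}}$, which in turn depends on $\nabla u_+$ through $1/k_i$. One must show that the logarithmic loss does not feed back and amplify. I would first try to prove that $\nabla_{x'}g_i$ remains bounded, because the singular kernel applied to $(\varepsilon+|y'|^2)^{(\alpha-1)/2}$ or to $|\log|$ is integrable over an $(n-1)$-dimensional domain. Only the constant term $\nabla g_i(0)\cdot x'$ then resonates, which yields exactly a $k_i^{-1}|\log(\varepsilon+|x'|^2)|$ correction. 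For $\gamma>1/\mu$ the same integrability closes the loop without any log.
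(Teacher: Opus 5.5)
Your reduction to the gap, the first-mode indicial equation giving $\alpha$, and the resonance mechanism at $\gamma=1/\mu$ all match the paper. In that mechanism the linear part of $g_1+g_2$ forces the homogeneous solution $r$, producing $r\log r$ with a coefficient of size $|\nabla g_1(0')+\nabla g_2(0')|\lesssim 1/k_1+1/k_2$. The gap is Step~2, which is exactly where the coupling has to be broken, and the mechanism you propose there does not work.

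The bound $|\nabla_{x'}g_i|\le C(1+k_i^{-1}\sup|\nabla u_+|)$ is circular, because $\sup|\nabla u_+|$ is the quantity that blows up. The refined weighted estimate is not meaningful either. The kernel $|x'-y'|^{1-n}$ is not locally integrable on an $(n-1)$-dimensional set; it is the critical Calder\'on--Zygmund kernel of the tangential gradient of the Neumann-to-Dirichlet map. So your integral diverges at $y'=x'$ even when $\nabla u_+$ is bounded, and the integrability of $(\varepsilon+|y'|^2)^{(\alpha-1)/2}$ at the origin does not help. More fundamentally, such an operator is not bounded on $L^\infty$, so no size bound on the Neumann data can give a bounded, let alone H\"older, $\nabla g_i$. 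You need H\"older continuity of the data, and as long as you control $\partial_\nu u_+$ through bounds on $\nabla u_+$ you have no access to it.

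The missing observation is that the Neumann data of $u_-$ is $k_i^{-1}\partial_\nu u_+=-(k_i\gamma)^{-1}(u_+-u_-)$. Hence only an $\varepsilon$-independent H\"older bound on $u_+$ near the gap is needed, and the blowing-up tangential derivatives never enter. The paper proves this first, in Lemma~\ref{lem:u_holder}, in the following order:
\begin{enumerate}
\item The $W^{1,p}$ Neumann estimate gives $[u_-]_{C^\theta}\le C/k_i$, so after subtracting $a+b_nx_n$ the Robin data are $O(|x'|^\theta)$.
\item Inserting this crude source into the reduced equation, together with the $\varepsilon^{-1/2}$ local estimate, yields a non-optimal bound $|\nabla u|\le C(\varepsilon+|x'|^2)^{(\beta-1)/2}$ for some small $\beta>0$, hence $\|u\|_{C^\beta(\Omega_{R_0/4})}\le C$.
\item Schauder estimates for the Neumann problem in $\cD_i$ then give $\|\nabla g_i\|_{C^\beta}\le C/k_i$, independently of the eventual blow-up rate.
\end{enumerate}
This ordering decouples the loop, so the feedback issue you identify as the main obstacle never arises. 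The H\"older bound on $\nabla g_i$ is also what makes your claim that only $\nabla g_i(0')\cdot x'$ resonates rigorous, since the remainder is then $O(|x'|^{1+\beta})$.

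Separately, Step~3 presupposes a local gradient estimate in the thin region for nonconstant Robin data. This does not follow from \cite{DongYangZhu26}, because reflection then produces large conormal jumps. The paper needs Theorem~\ref{thm-1/2} for it, which uses an affine normalization of the data at $x_0'$ and absorbs the jumps into a divergence-form forcing.
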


	Estimates \eqref{grad_u_upperbound3} and \eqref{grad_u_upperbound2} indicate that when $\gamma \ge 1/\mu$, the gradient of the solution may blow up as $\varepsilon \to 0$. {The following example shows that these blow-up rates are attained.}
	
	\begin{theorem}
		\label{main_thm}
		For $n \ge 2$, $\mu > 0$, $\varepsilon \in (0,1/(4\mu))$, let $\Omega = B_{5/\mu}$, $\cD_1 , \cD_2$ be balls of radius $1/\mu$ centered at $(0', 1/\mu + \varepsilon/2)$ and $(0', -1/\mu - \varepsilon/2)$, respectively, $\varphi(x) = x_1$, and let $u$ be the solution of \eqref{robin}. Then there exist positive constants $c$ and $C$, depending only on $n$, $k_1$, $k_2$, and $\mu$, such that
		\begin{itemize}
			\item when $\gamma=1/\mu$,
			\begin{equation}
				\label{grad_u_lower_bound_new}
				\|\nabla u\|_{L^\infty(\Omega_{c \sqrt\varepsilon})} \ge \frac{1}{C}|\log\varepsilon|, 
			\end{equation}
			\item when $ \gamma > 1/\mu$,
			\begin{equation}
				\label{grad_u_lower_bound}
				\|\nabla u\|_{L^\infty(\Omega_{c \sqrt\varepsilon})} \ge \frac{1}{C}\varepsilon^{\frac{\alpha-1}{2}}, 
			\end{equation}
			where $\alpha$ is given in \eqref{alpha_n}.
		\end{itemize}
	\end{theorem}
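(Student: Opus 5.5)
The lower bounds in Theorem \ref{main_thm} come from the explicit symmetric geometry. Since $\Omega$, $\cD_1$, $\cD_2$ and $\varphi=x_1$ are symmetric, the solution is odd in $x_1$ and even in $x_2,\dots,x_{n-1}$. By the oddness, $u(0',x_n)=0$ on the axis. It therefore suffices to show that, at some height $x_n$ inside the gap near the origin, the tangential quantity $u(x)/x_1$ is large for $|x'|\sim\sqrt\varepsilon$. Concretely, we aim to bound from below $|u(\sqrt\varepsilon e_1,0)|/\sqrt\varepsilon$. The mean value theorem then gives a point in $\Omega_{c\sqrt\varepsilon}$ where $|\partial_1 u|$ is at least this quotient.

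\emph{Step 1: the limiting profile.} In the narrow region we reduce \eqref{robin} to a one-dimensional-in-$x'$ ODE. Averaging $u_+$ in $x_n$ across the gap of width $\delta(x')=\varepsilon+\mu|x'|^2+O(|x'|^{2+\sigma})$ and using the Robin conditions $u_++\gamma\partial_\nu u_+=g_{1,2}$, the average $\bar u(x')$ approximately satisfies
\[
\dv_{x'}(\delta\nabla_{x'}\bar u)\approx \frac{2}{\gamma}\bigl(\bar u-\tfrac{g_1+g_2}{2}\bigr).
\]
With $\delta=\mu|x'|^2$ (the case $\varepsilon=0$) and the ansatz $\bar u=r^\alpha\cos\theta$, where $\cos\theta$ is the first odd spherical harmonic $x_1/|x'|$, the exponent $\alpha$ solves $\alpha^2+(n-1)\alpha-(n-2)-2/(\mu\gamma)=0$. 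This is exactly \eqref{alpha_n}. The interior terms $g_i=u_-|_{\Gamma_\pm}$ are lower order: $u_-$ is harmonic in a ball with flux $k_i^{-1}\partial_\nu u_+$, so $g_i$ is Lipschitz with bounds of size $k_i^{-1}$ times the flux norm. For $\gamma>1/\mu$ (so $\alpha<1$), $g_i\sim x_1$ is subdominant relative to $r^\alpha$. At $\gamma=1/\mu$ we have $\alpha=1$, and $g_i$ resonates with the homogeneous solution $x_1$, which produces the term $x_1\log|x'|$. This is the source of the logarithm, with coefficient proportional to $1/k_1+1/k_2$.

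\emph{Step 2: comparison functions.} We construct explicit subsolutions in $\Omega_{R_0}\cap\{x_1>0\}$. For $\gamma>1/\mu$ we take
\[
w=x_1(\varepsilon+|x'|^2)^{(\alpha-1)/2}\bigl(1+\kappa\, x_n\text{-corrections}\bigr).
\]
This choice makes $\Delta w\ge0$ and $w+\gamma\partial_\nu w\le g_i$ up to errors of order $|x'|^{\sigma}$ relative to $w$. At $\gamma=1/\mu$ we take $w=x_1\bigl(A|\log(\varepsilon+|x'|^2)|+B\bigr)$. A positivity input is needed first: we must show $u_+>0$ on $\{x_1>0\}$ and $u_-\ge c x_1$ near the gap. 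We expect to get this from the maximum principle for the Robin transmission system, which is valid because of the coupling structure, applied to $u$ against $x_1$-type barriers, together with the explicit ball geometry. We then compare $u$ with $c\,w$ on $\Omega_{R_0/2}\cap\{x_1>0\}$. The comparison uses $u=w=0$ on $x_1=0$ and $u\ge c$ on $|x'|=R_0/2$, the latter from the positivity and the a priori bounds of Theorem \ref{thm_upperbound}. The conclusion is $u\ge c\,w$, and evaluating at $x_1=\sqrt\varepsilon$ gives the rates \eqref{grad_u_lower_bound} and \eqref{grad_u_lower_bound_new}.

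\emph{Main obstacle.} The hard step is the logarithmic case. There the subsolution must capture a resonance driven by $g_i$, so we need a quantitative lower bound $g_i\ge c x_1$ that is uniform in $\varepsilon$. This requires understanding $u_-$ inside the balls, whose Neumann data blow up mildly. We would first try to solve the interior problem explicitly through the Poisson/Neumann kernel of the ball, and use the sign of the flux $\partial_\nu u_+\le0$ on $\{x_1>0\}\cap\Gamma_+$ to deduce monotonicity of $g_i$ in $x_1$.
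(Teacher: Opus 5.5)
Your overall strategy is a direct barrier comparison in the gap, which is a different route from the paper's. It works for the logarithmic case, but the barrier you propose for $\gamma>1/\mu$ is not a subsolution, so that case fails as written.

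\textbf{The barrier for $\gamma>1/\mu$.} Take $\mu=1$ and let $L$ be the radial operator \eqref{def_L} acting on the first harmonic. Integrate $\Delta w\ge0$ across the gap and insert $w+\gamma\partial_\nu w\le g_i$, exactly as in Section~\ref{sec4}. For $w$ of the form $\frac{x_1}{|x'|}\Psi(|x'|)$ plus $x_n$-corrections, this yields the necessary condition $(\varepsilon+r^2)L\Psi\ge -Cr$ up to lower-order errors, because $g_1+g_2=O(x_1)$. Using $\alpha^2+(n-1)\alpha=n-2+2/\gamma$, however, one computes
\[
L\bigl[r(\varepsilon+r^2)^{\frac{\alpha-1}{2}}\bigr]=-(\alpha-1)^2\,\varepsilon\, r\,(\varepsilon+r^2)^{\frac{\alpha-5}{2}}<0 .
\]
So $x_1(\varepsilon+|x'|^2)^{(\alpha-1)/2}$ is a strict supersolution. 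Its defect is comparable to the leading terms wherever $|x'|\lesssim\sqrt\varepsilon$, so it is not an $O(|x'|^\sigma)$ error. Because the obstruction sits in the vertical average, lower-order $x_n$-corrections cannot remove it.

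The necessary condition then forces the amplitude of $w$ to be $O(\varepsilon^{(1-\alpha)/2})$. Hence $w\lesssim x_1$ for $|x'|\sim\sqrt\varepsilon$, and the comparison gives nothing beyond $u\ge x_1$. Your profile has the shape of the upper bound, not the lower one.

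A lower barrier must be allowed to vanish near the axis. The radial factor $(r^\alpha-(2\sqrt\varepsilon)^\alpha)_+$, which is the subsolution of $L$ used in Lemma~\ref{lemma_subsolution}, is $\gtrsim\varepsilon^{\alpha/2}$ at $r=4\sqrt\varepsilon$. You should therefore evaluate at $|x'|\approx4\sqrt\varepsilon$, not at $x_1=\sqrt\varepsilon$. You would still have to build its $x_n$-corrections and absorb the geometric errors for $|x'|\gg\sqrt\varepsilon$, for instance using the slack from $g_1+g_2\ge c\,x_1$.

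The paper avoids gap barriers altogether. It projects \eqref{equation_v_bar_2} onto $Y_{1,1}$ and uses reduction of order to write $V_{1,1}=C_1(\varepsilon)h+p_1+p_2$, with $h$ the exact homogeneous solution of Lemma~\ref{ODE_lemma} and $|p_1+p_2|\le Cr^{1+\min\{\alpha,\beta\}}$. It then uses $V_{1,1}\ge c_nr$ to get $C_1(\varepsilon)\gtrsim1$, and concludes with $h(2c\sqrt\varepsilon)\gtrsim\varepsilon^{\alpha/2}$.

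\textbf{The critical case.} Your barrier is the right one at the reduced level. For $\gamma\mu=1$ one has $L[r(B-A\log(\varepsilon+r^2))]=-2(n+1)Ar/(\varepsilon+r^2)$. Hence $x_1(A|\log(\varepsilon+|x'|^2)|+B)$ is a reduced subsolution once $2(n+1)\gamma A<\lambda_1+\lambda_2$, where $g_i\ge\lambda_ix_1$. This is the same coefficient as in \eqref{V11_lower_2}, where the paper obtains the logarithm from the explicit particular solution $p_1$. It too needs $x_n$-corrections, since by itself it is superharmonic in $\{x_1>0\}$.

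\textbf{The positivity input.} Everything rests on the uniform bound $g_i\ge\lambda_ix_1$, which you leave open. Your proposed route to it rests on a misconception: the Neumann datum of $u_-$ is $-(k_i\gamma)^{-1}(u_+-u_-)$, which the maximum principle keeps bounded, so it does not blow up. No kernel computation is needed. On spheres of radius $1/\mu$ one has $\partial_\nu x_1=-\mu x_1$, so the pair $v_+=x_1$, $v_-=x_1/(1+k_i\mu\gamma)$ satisfies the interior transmission condition exactly. It also satisfies $\partial_\nu v_++\gamma^{-1}(v_+-v_-)=\mu x_1\bigl(\frac{k_i}{1+k_i\mu\gamma}-1\bigr)\le0$ on $\{x_1\ge0\}$ when $\mu\gamma\ge1$. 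Applying Lemma~\ref{lem:maximum_principle} on $\Omega\cap\{x_1>0\}$, with $u=0$ on $\{x_1=0\}$ by oddness, gives $u_+\ge x_1$ and $u_-\ge x_1/(1+k_i\mu\gamma)$; this is Lemma~\ref{linear_lower}.

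The same lemma supplies the lateral condition for your comparison. What you need is $u\ge x_1$ on $|x'|=R$, not $u\ge c$. Theorem~\ref{thm_upperbound} cannot provide it, since it only gives upper bounds.

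\textbf{What the repaired route buys.} With a corrected barrier and this lemma, your route would need only the lower bound on $g_i$ and the Robin maximum principle in the gap. It would avoid the $C^{1,\beta}$ bounds on $g_i$ and the dimension-reduction error estimates that the paper uses. The cost is constructing the $x_n$-corrections and the error absorption by hand.
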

	
	\begin{remark}
		As $k_1,k_2 \to \infty$, Theorem \ref{thm_upperbound} recovers our previous result \cite{DongYangZhu26}*{Theorem 1.4}. In particular, estimates \eqref{grad_u_upperbound3} and \eqref{grad_u_lower_bound_new} reveal a new blow-up phenomenon at the critical bonding parameter, caused by the resonance between two inclusions of finite conductivity.
		Moreover, the constant $C$ in \eqref{grad_u_upperbound1} depends on $\gamma$ through $\gamma_0$ and the positive lower bound of $1- \gamma\mu$, while the constant in \eqref{grad_u_upperbound2} depends on $\gamma$ only through the positive lower bound of $\gamma\mu - 1$ { but does not depend on the upper bound of $\gamma$}. As a consequence, estimate \eqref{grad_u_upperbound2} { is consistent with} the optimal gradient estimate for the insulated problem obtained in \cite{DLY} as $\gamma \to \infty$. We note that the discussion on $\gamma$ dependence does not apply to Theorems \ref{thm_upperbound_3} and \ref{thm_upperbound_2} below.
	\end{remark}
	
	We also define
	\begin{equation}
		\label{alpha_n_zero}
		\alpha_0=\alpha_0(n,\gamma, \mu):= \frac {-(n-1)+\sqrt{(n-1)^2+{8/(\mu\gamma)}}}{2}.
	\end{equation}
	Note that $0<\alpha_0\le \alpha$, $\alpha_0=\alpha$ if and only if $n=2$, and $\alpha_0 <1$ if and only if $\gamma >  2/(n\mu)$.
	
	In two dimensions, the optimal estimates in Theorem \ref{thm_upperbound} actually hold {for general domains and boundary data} without assumptions on symmetry.
	
	\begin{theorem}\label{thm_upperbound_3}
		For $n=2$, $\varepsilon \in (0,R_0/4)$, $\gamma > 0$, $\mu > 0$, let $h_1,h_2$ satisfy \eqref{fg_0}, \eqref{def:c_2}, \eqref{fg_2}, and \eqref{fg_3}, and let $u$ be the solution of \eqref{robin}. 
		{ Then the assertions of {Theorem} \ref{thm_upperbound}} hold.
	\end{theorem}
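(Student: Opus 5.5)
In two dimensions we will prove Theorem \ref{thm_upperbound_3} by showing that the symmetry hypothesis is only used to kill one mode. In $n=2$ the tangential variable is one-dimensional. The analysis in the narrow region reduces to an ODE in $x_1$ for the vertical average $\bar u(x_1)=\fint u\,dx_2$, with Robin data $u+\gamma\partial_\nu u=g_i$ on $\Gamma_\pm$.

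The plan is to decompose $u$ in $\Omega_{R_0}$ into an even part and an odd part. Since the domain is not symmetric, we do this only at the level of the model problem. We write $u = u(0)+ (u-u(0))$, and the constant part solves the Robin problem with data $g_i-u(0)$. Integrating $\Delta u=0$ over vertical segments, and using the Robin condition together with $h_1-h_2=\mu x_1^2+O(|x_1|^{2+\sigma})$, gives an approximate equation
\[
\big((\varepsilon+\mu x_1^2)\bar u'\big)' - \frac{2}{\gamma}\bar u \approx -\frac{1}{\gamma}(g_1+g_2).
\]
Its homogeneous solutions behave like $|x_1|^{\alpha}$ and $|x_1|^{-1-\alpha}$ away from $|x_1|\lesssim\sqrt\varepsilon$. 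Here $\alpha$ solves $\alpha(\alpha+1)=2/(\mu\gamma)$, which is exactly \eqref{alpha_n} with $n=2$, and coincides with $\alpha_0$.

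The key observation is that in $n=2$ the only degree of freedom not controlled by odd symmetry is the constant (even) mode. This mode behaves like the homogeneous solution that is even in $x_1$, and the even solution regular at $0$ grows like $(\varepsilon+x_1^2)^{\alpha/2}$ as well. The gradient is then governed by $\bar u'$, which is bounded by $(\varepsilon+x_1^2)^{(\alpha-1)/2}$ times a normalization constant, and not by the value $\bar u(0)$. So the estimates should hold without symmetry. Concretely, we will rerun the iteration/comparison argument used for Theorem \ref{thm_upperbound}, either a maximum-principle barrier or an energy decay in annuli $\Omega_{2^{-k}}\setminus\Omega_{2^{-k-1}}$, applied to $u-(u)_{\Omega_{\sqrt\varepsilon}}$ instead of $u$. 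Oddness was only used to have $u(0)=0$ so that the mode $|x'|^{\alpha}$ with $\alpha<1$ had zero constant term. In $n=2$ the first nonconstant mode on $\bS^{0}$ is the odd one, and the even correction is a constant whose derivative vanishes.

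The main obstacle is the inhomogeneous coupling through $g_i=u_-|_{\Gamma_\pm}$, the interior traces in the finite-conductivity inclusions. In the odd case $g_i$ vanishes at $0$ and has size $O(|x'|)$. In general $g_1+g_2$ has a nonzero value at $0$, and its derivative $g_i'$ must be controlled. We plan to bound $\|g_i\|_{C^{1,\beta}_{x'}}$ by $C(1+k_i^{-1}\|\nabla u_+\|)$ via the interior equation $k_i\partial_\nu u_-=\partial_\nu u_+$ and boundary estimates inside $\cD_i$. This leads to a bootstrap in which the forcing term $g_1'+g_2'$ in the derivative equation produces, at the resonance $\alpha=1$ ($\gamma=1/\mu$), the logarithm $|\log(\varepsilon+x_1^2)|(k_1^{-1}+k_2^{-1})$. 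We expect the even constant part of $g_i$ to contribute only to $\bar u$ and not to its derivative. Closing the bootstrap requires the forcing to be only linear in the gradient bound with small factor on small scales, and we will try a dyadic absorption argument for this. Combining these steps with the global bounds $\|u\|_{L^\infty}\le C\|\varphi\|_{L^\infty}$ yields \eqref{grad_u_upperbound1}--\eqref{grad_u_upperbound2} for $n=2$.
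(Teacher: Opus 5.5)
Your structural observation for $n=2$ is right. The even part of $\bar v$ satisfies the same ODE as the odd part, so $\alpha_0=\alpha$ and the zero mode decays at the optimal rate; this is the paper's Lemma \ref{lemma_v1_zero_mode_2d}, proved with the supersolution $r^{\min\{\alpha,1\}}$. You also correctly trace the logarithm to the resonance of the linear part of $g_1+g_2$ with the first mode.

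\textbf{Main gap.} What is missing is the step that actually replaces the symmetry: turning decay of $\bar v-\bar v(0)$ (or of $\bar u'$) into pointwise bounds on $\nabla u$. Pointwise bounds come only from the local estimate of Theorem \ref{thm-1/2}. The version used for Theorem \ref{thm_upperbound}, \eqref{gradient-1/2a}, controls $r|\nabla u|$ by $\|u-(g_1(0')+g_2(0'))/2\|_{L^2(\Omega_{x_0,2r})}$. This quantity does not change when you replace $u$ by $u-c$, because $u-c$ has Robin data $g_i-c$. Constants do not solve the homogeneous Robin problem; that is exactly what the term $-2\gamma^{-1}\bar v$ in the reduced equation encodes. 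So ``the even correction is a constant whose derivative vanishes'' is not a valid reduction, and rerunning the argument for $u-(u)_{\Omega_{\sqrt\varepsilon}}$ gains nothing.

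Your claim that $\nabla u$ is governed by the oscillation and not by $\bar u(0)$ is true, but it is precisely the new estimate \eqref{gradient-1/2}, which your plan neither states nor proves. In the paper it comes from the following chain:
\begin{itemize}
\item The Robin-to-conormal substitution $\tilde v=\tilde u e^{\psi}$ produces a zeroth-order coefficient $c_L$ with $-c_L\sim(\gamma r^2)^{-1}$.
\item Dividing the equation by $c_L$ and integrating bounds the cell average by the gradient.
\item The tangential gradient is then interpolated against the oscillation.
\item The vertical derivative needs the separate bound $|\partial_{x_n}u|\le C$. This follows from the Robin condition, the preliminary $\varepsilon^{-1/2}$ bound, and the maximum principle for the harmonic function $\partial_{x_n}u$.
\end{itemize}
Alternatively, you could show that $\bar v(0)$ itself is small. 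With $\widehat\alpha=\min\{\alpha,1\}$, comparison with $(\varepsilon+r^2)^{\widehat\alpha/2}$ gives
$|V_0(\rho)|\le\big((\varepsilon+\rho^2)/(\varepsilon+R^2)\big)^{\widehat\alpha/2}|V_0(R)|$ for the even part of the homogeneous piece, and then \eqref{gradient-1/2a} applies. Your plan, however, explicitly avoids controlling $\bar u(0)$, so as written it does neither.

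\textbf{Second gap: the interior traces $g_i=u_-|_{\Gamma_\pm}$.} A bound $\|g_i\|_{C^{1,\beta}}\le C(1+k_i^{-1}\|\nabla u_+\|)$ feeds the blowing-up gradient back into $G_2$. The ``dyadic absorption'' you propose is unspecified, and it is unclear that it closes at the optimal rate.

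The paper avoids any circularity, in this order:
\begin{itemize}
\item The Neumann data of $u_-$ are $-(k_i\gamma)^{-1}(u_+-u_-)$, which involve only values of $u$ and are bounded by the maximum principle.
\item $W^{1,p}$ estimates then give $g_i\in C^\theta$ with norm $C/k_i$ for any $\theta<1$.
\item One pass of the reduction with this crude data, again via \eqref{gradient-1/2}, gives an $\varepsilon$-uniform $C^\beta$ bound for $u$ across the gap (Lemma \ref{lem:u_holder}).
\item Only then do Schauder estimates give $\|\nabla g_i\|_{C^\beta}\le C/k_i$.
\end{itemize}
With this in hand, one subtracts $a+b\cdot x$ so that $\tilde g_i(0')=0$. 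In the normalization $\mu=1$, take $b'=-(\nabla g_1(0')+\nabla g_2(0'))/(2(\gamma-1))$ when $\gamma\ne1$, so that $G_2=O(|x'|^{1+\beta})$. When $\gamma=1$, the remaining term of size $(k_1^{-1}+k_2^{-1})|x'|$ produces the logarithm with the stated dependence on $k_i$.
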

	
	In three and higher dimensions, we have the following estimates for general domains and boundary data, { but optimality is unclear.}
	\begin{theorem}\label{thm_upperbound_2}
		For $n \ge 3$, $\varepsilon \in (0,R_0/4)$, $\gamma > 0$, $\mu > 0$, let $h_1,h_2$ satisfy \eqref{fg_0}, \eqref{def:c_2}, \eqref{fg_2}, and \eqref{fg_3}, and let $u$ be the solution of \eqref{robin}. Then there exists a positive constant $C$, depending only on $n$, $\gamma$, $k_0$, $\mu$, $R_0$, $\|h_1\|_{C^{2,\sigma}}$, and $\|h_2\|_{C^{2,\sigma}}$, such that 
		\begin{itemize}
			\item when $0 < \gamma \le 2/(n\mu)$,
			\begin{equation}\label{grad_u_upperbound1_zero}
				|\nabla u(x)| \le C \|\varphi\|_{L^\infty(\partial\Omega)}\quad \mbox{in}~ \Omega_{R_0/2},
			\end{equation}
			\item when $\gamma >2/(n\mu)$,
			\begin{equation}\label{grad_u_upperbound2_zero}
				|\nabla u(x)| \le C \|\varphi\|_{L^\infty(\partial\Omega)}(\varepsilon + |x'|^2)^{\frac{\alpha_0-1}{2}} \quad \mbox{in}~  \Omega_{R_0/2},
			\end{equation}
		\end{itemize}
		where $\alpha_0$ is given in \eqref{alpha_n_zero}.
	\end{theorem}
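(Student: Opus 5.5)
We will prove Theorem \ref{thm_upperbound_2} by the same scheme used for Theorem \ref{thm_upperbound}, dropping the symmetry assumption. Without symmetry the solution's leading mode is the radially symmetric ($\bS^{n-2}$ zeroth harmonic) mode, which is why the exponent $\alpha_0$ (no $n-2$ term) appears instead of $\alpha$.

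\textbf{Step 1: global reduction to the narrow-region problem.} First, energy estimates and standard regularity away from the gap give $\|u\|_{L^\infty}\le C\|\varphi\|_{L^\infty(\partial\Omega)}$. Away from $\Omega_{R_0/2}$, this also gives boundedness of $\nabla u_\pm$, the latter using $k_i\ge k_0$ and the Robin coupling. Inside each $\cD_i$, $u_-$ is harmonic with the Neumann-type condition $k_i\partial_\nu u_-=\partial_\nu u_+$. Hence we may regard $u_+$ as solving \eqref{main_problem_narrow} with $g_i=u_-|_{\Gamma_\pm}$. Bootstrapping between inside and outside, we expect to show $\|g_i\|_{C^{1,\beta}_{x'}}\le C\|\varphi\|_\infty$, up to the factors that feed back from $\nabla u_+$. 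Concretely, flux bounds on $\partial\cD_i$ of size $\sup|\nabla u_+|$ translate into $C^{1,\beta}$ bounds on $u_-$ with a gain of $1/k_i$.

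\textbf{Step 2: the narrow-region estimate without symmetry.} Following the approach for \eqref{main_problem_narrow}, we average in $x_n$. Set $\bar u(x')=\fint u\,dx_n$ over the vertical segment of length $\delta(x')=\varepsilon+h_1-h_2$. Integrating $\Delta u=0$ in $x_n$ and using the Robin conditions yields, to leading order, the degenerate equation
\[
\dv_{x'}(\delta\nabla_{x'}\bar u)-\tfrac{2}{\gamma}\bar u=-\tfrac{1}{\gamma}(g_1+g_2)+\text{error}.
\]
With $\delta\approx\varepsilon+\mu|x'|^2$, radial power solutions $|x'|^{s}$ of $\mu\,\dv(|x'|^2\nabla w)=\tfrac{2}{\gamma}w$ satisfy $\mu s(s+n-1)=2/\gamma$. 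Its positive root is $s=\alpha_0$ from \eqref{alpha_n_zero}.

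We then build barriers (sub/supersolutions of the form $(\varepsilon+|x'|^2)^{\alpha_0/2}$ plus corrections) and use the comparison principle for the Robin problem. This shows that $u-\tfrac12(g_1+g_2)(0)$, or more precisely $u$ minus a particular solution accounting for the data, is bounded by $C(\varepsilon+|x'|^2)^{\min(\alpha_0,1)/2}$ (with a log when $\alpha_0=1$). The data $g_i$ are $C^{1,\beta}$, so their linear parts contribute growth of order $|x'|$. That growth dominates precisely when $\alpha_0\ge1$, i.e.\ $\gamma\le 2/(n\mu)$, which is the regime giving a bounded gradient.

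\textbf{Step 3: oscillation to gradient.} Rescale the cylinder $\Omega_{x_0,\sqrt{\delta(x_0')}}$ to unit size. In rescaled variables the Robin coefficient becomes $\gamma/\delta$, which is favorable. Apply boundary $C^{1}$ estimates for Robin problems to turn the oscillation bound $\delta^{\alpha_0/2}$ over a cell of width $\delta$ into $|\nabla u|\le C\delta^{(\alpha_0-1)/2}$, giving \eqref{grad_u_upperbound2_zero} and \eqref{grad_u_upperbound1_zero}.

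\textbf{Main obstacle.} The hardest point is closing the coupling of Step 1 with the blow-up of Step 2. The data $g_i=u_-$ depend on $\nabla u_+$ through the flux, which may blow up. We would handle this by showing that $|\nabla u_+|\lesssim\delta^{(\alpha_0-1)/2}$ is integrable enough on $\Gamma_\pm$ (as $\alpha_0>0$) to keep $u_-$ in $C^{1,\beta}$ uniformly. The critical case $\alpha_0=1$ needs care, since there the feedback produces a logarithm. We expect it to be absorbed because the claimed bound at $\gamma=2/(n\mu)$ is just boundedness. This absorption must be checked via a quantitative iteration, and if it fails there one may need the strict inequality.
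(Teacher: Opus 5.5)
There is a genuine gap, and it sits exactly at the step you flag as the main obstacle. You have misdiagnosed the coupling: the flux never blows up. By the maximum principle, $k_i\partial_\nu u_-=\partial_\nu u_+=-\gamma^{-1}(u_+-u_-)$ is bounded by $2\gamma^{-1}\|\varphi\|_{L^\infty(\partial\Omega)}$. But an $L^\infty$ flux bound, of any size, only gives $u_-\in W^{1,p}$, i.e.\ $g_i\in C^\theta$ with constant $C/k_i$ as in \eqref{u_holder_inside}; it never gives $C^{1,\beta}$. To get $g_i\in C^{1,\beta}$ you need a $C^\beta$ flux, i.e.\ a H\"older bound for $u_+$ near the gap that is uniform in $\varepsilon$. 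Deriving this from the final bound $|\nabla u_+|\lesssim\delta^{(\alpha_0-1)/2}$ is circular, because your Step 2 uses $g_i\in C^{1,\beta}$ to prove that bound. The paper breaks the circle with a preliminary non-optimal pass (Lemma~\ref{lem:u_holder}). It subtracts $a+b_nx_n$ so the data vanish at $0'$ and are $O(|x'|^\theta)$, using only \eqref{u_holder_inside}. It then controls $\bar v-\bar v(0)$ via the zero-mode barrier (Lemma~\ref{lemma_v1_zero_mode}, Proposition~\ref{prop_grad_v_bar_control}(iii)). Finally it converts this into $|\nabla u|\le C(\varepsilon+|x'|^2)^{(\beta-1)/2}$ through the oscillation form \eqref{gradient-1/2} of the local estimate, together with $|\partial_n u|\le C$ from the maximum principle for $\partial_n u$. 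The result is $u\in C^\beta$ uniformly, hence $\|\nabla u_-\|_{C^\beta(\cD_i)}\le C/k_i$ by the Neumann Schauder estimate. Since $\beta$ is fixed once and for all, there is no feedback in the later bootstrap.

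As written, you also do not reach the endpoint $\gamma=2/(n\mu)$ of \eqref{grad_u_upperbound1_zero}. Your Step 2 itself produces a logarithm when $\alpha_0=1$, and ``boundedness should absorb it'' is not an argument. The logarithm comes from leaving the linear part of $g_1+g_2$ as an $O(|x'|)$ forcing, which is of the same order as the rate-one barrier. The missing step is to subtract $a+b\cdot x$ with $b'$ chosen as in \eqref{subtr}--\eqref{tildeg}, which cancels the linear part of $\tilde g_1+\tilde g_2$. This works precisely when $\gamma\mu\neq1$, because $x_1$ solves the homogeneous reduced equation only when $\gamma\mu=1$. At $\gamma=2/(n\mu)$ one has $\gamma\mu=2/n\neq1$ since $n\ge3$, so the remaining forcing is $O(|x'|^{1+\beta})$, strictly subordinate, and the iteration with $1+\sigma-2s_k\neq1$ gives a bounded gradient with no logarithm. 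At $\gamma\mu=1$ the linear forcing cannot be removed, but it is harmless there because $\alpha_0<1$.

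Two further steps are asserted rather than proved. First, the barrier ``corrections'' are where the work lies. For $\mu=1$, $L_0(\varepsilon+r^2)^{\alpha_0/2}=-\alpha_0^2\varepsilon(\varepsilon+r^2)^{\alpha_0/2-2}$ has a margin of relative size only $\varepsilon/(\varepsilon+r^2)$. That cannot absorb geometric errors of relative size $|x'|^\sigma$, nor the $\nabla u$-dependent terms $F,G_1$ that appear if you compare at the averaged level. The paper instead splits $\bar v=v_1+v_2$, uses the barrier only for the homogeneous radial mode and Lemma~\ref{lemma_v2} for the forced part, and iterates in $s_k$. Second, Step 3 is not a routine rescaling. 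The cell $\Omega_{x_0,r}$ has dimensions $r\times r^2$, so standard boundary Robin estimates are not uniform; this is what Theorem~\ref{thm-1/2} supplies via flattening, the Robin-to-conormal transform, reflection and laminate estimates. Its input must also be $u$ measured against the data, as in \eqref{gradient-1/2a}, or the oscillation plus $r\|\partial_{x_n}u\|_{L^\infty}$, as in \eqref{gradient-1/2} — not the oscillation alone.
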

	These estimates remain bounded at $\gamma=2/(n\mu)$ without a logarithmic loss. However, it is not clear to us whether the estimates are optimal or whether the threshold is indeed $2/(n\mu)$ in this case.

    \subsection{Novelty, main difficulties, and organization}
	The main novelty is that finite internal conductivities do not eliminate field concentration for LC-type imperfect interfaces and can produce the critical logarithmic blow-up. Compared with \cites{DongYangZhu26}, where the inclusions are perfect conductors so the interior potentials are constants, the present problem involves a coupled elliptic system for the interior and exterior potentials. The interior traces provide unknown, nonconstant Robin data for the exterior problem, and their regularity must be controlled simultaneously with the exterior solution. A key new ingredient is the derivation of local gradient estimates adapted to this coupling. A direct application of the reflection argument in \cites{DongYangZhu26} introduces huge jumps in the conormal derivative arising from the nonconstant Robin data. We overcome this difficulty by first subtracting an affine function to normalize the data at a reference point and then absorbing the jumps produced by reflection into a divergence-form forcing term. Moreover, we exploit the negative leading part of the zeroth-order coefficient generated by the Robin-to-conormal transformation to derive an estimate in terms of the oscillation of the solution. Together with control of the zero spherical harmonic mode, this stronger estimate yields optimal estimates without symmetry assumptions in two dimensions and upper bounds for general domains and data in higher dimensions, whose optimality remains open. The coupling also introduces a linear forcing term in the reduced equation that resonates with the first spherical harmonic mode at the critical value $\gamma=1/\mu$, producing the optimal logarithmic blow-up absent in \cites{DongYangZhu26}.
	
	The rest of the paper is organized as follows. Section \ref{sec:prelim} establishes the variational formulation, the maximum principle, and preliminary regularity estimates. Sections \ref{sec:local-estimates}
and \ref{sec4} develop the crucial local gradient estimates and the dimensional reduction argument, respectively, which are used in Section \ref{sec:upper-bounds} to prove the upper gradient bounds. Section \ref{sec:two-dimensional} treats the two-dimensional case without symmetry assumptions, while Section \ref{sec:optimality} establishes the optimality of the blow-up rates. Section \ref{sec:variable-bonding} discusses extensions to variable bonding parameters. Appendix \ref{sec_appendix} contains the construction and estimates for the normal-preserving flattening map used in the local analysis.

	\section{\texorpdfstring{{Preliminaries}}{Preliminaries}}\label{sec:prelim}
	
	The problem \eqref{robin} admits a variational structure. We shall show that the piecewise $H^1$ solution $u$ to equation \eqref{robin} is indeed the unique minimizer of a functional in an appropriate function space: $I[u] = \min_{v \in \sA}I[v]$, where
	\begin{equation}\label{minimizer}
		\begin{aligned}
			I[v] &:= \int_{\widetilde\Omega} |\nabla v_+|^2+ \sum_{j=1}^2 \int_{{\cD_j}} k_j |\nabla v_-|^{2} + \sum_{j=1}^2 \gamma^{-1}\int_{\partial \cD_j} |v_+ - v_-|^2,\\
			\sA &:= \{ v~ \mbox{piecewise}~ H^{1}:\, v_+ = \varphi~~\mbox{on}~~\partial\Omega \}.
		\end{aligned}
	\end{equation}
	
	{We first prove the maximum principle} for the equation \eqref{robin}.

	\begin{lemma}\label{lem:maximum_principle}
		Let $n\ge 2$, $\Omega$, $\cD_1$, and $\cD_2$ be Lipschitz domains in $\bR^n$. Let $u$ be a piecewise $H^1$ subsolution to the equation \eqref{robin}, that is,
		$u_+\in H^1(\widetilde\Omega)$ and $u_- \in H^1(\cD_1 \cup \cD_2)$ satisfy
		\begin{equation}\label{subrobin}
			\begin{cases}
				\Delta u_+ \ge 0&\mbox{in}~\widetilde\Omega,\\
				\Delta u_- \ge 0&\mbox{in}~\mathcal{D}_1\cup\mathcal{D}_2,\\ 
				\partial_\nu u_+ \le  -\gamma^{-1}(u_+-u_-)&\mbox{on}~\partial \mathcal{D}_i, \quad i=1,2,\\
				k_i \partial_\nu u_- \ge -\gamma^{-1}(u_+-u_-)&\mbox{on}~\partial \mathcal{D}_i, \quad i=1,2,\\
				u_+ \le \varphi&\mbox{on}~\partial\Omega.
			\end{cases}
		\end{equation}
		Then it holds that
		\begin{equation*}
			u \le \sup_{\partial \Omega} \varphi \quad \mbox{in } \Omega.
		\end{equation*}
	\end{lemma}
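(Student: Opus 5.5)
Set $M:=\sup_{\partial\Omega}\varphi$ and assume $M<\infty$. We test the weak form of \eqref{subrobin} against the positive parts
\[
w_+:=(u_+-M)^+\in H^1(\widetilde\Omega),\qquad w_-:=(u_--M)^+\in H^1(\cD_1\cup\cD_2).
\]
Since $u_+\le\varphi\le M$ on $\partial\Omega$, $w_+$ vanishes on $\partial\Omega$. Both functions are nonnegative and admissible test functions, because truncation preserves $H^1$ and commutes with traces on Lipschitz boundaries.

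The weak meaning of \eqref{subrobin} is taken as follows. For every nonnegative piecewise $H^1$ function $\eta=(\eta_+,\eta_-)$ with $\eta_+=0$ on $\partial\Omega$,
\[
\int_{\widetilde\Omega}\nabla u_+\cdot\nabla\eta_+ +\sum_i k_i\int_{\cD_i}\nabla u_-\cdot\nabla\eta_- +\gamma^{-1}\sum_i\int_{\partial\cD_i}(u_+-u_-)(\eta_+-\eta_-)\le 0 .
\]
To justify this, integrate by parts separately in $\widetilde\Omega$ and in $\cD_i$, keeping in mind that $\nu$ points into $\cD_i$. In $\widetilde\Omega$ the outward normal on $\partial\cD_i$ is $\nu$. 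This gives $-\int\Delta u_+\,\eta_+ = \int\nabla u_+\cdot\nabla\eta_+ - \int_{\partial\cD_i}\partial_\nu u_+\,\eta_+$, and together with $\Delta u_+\ge0$ and $\partial_\nu u_+\le-\gamma^{-1}(u_+-u_-)$ this yields $\int\nabla u_+\cdot\nabla\eta_+ +\gamma^{-1}\int(u_+-u_-)\eta_+\le 0$. Inside $\cD_i$ the outward normal is $-\nu$. The same computation with $k_i\partial_\nu u_-\ge-\gamma^{-1}(u_+-u_-)$ gives $k_i\int\nabla u_-\cdot\nabla\eta_- -\gamma^{-1}\int(u_+-u_-)\eta_-\le0$. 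Adding the two inequalities gives the displayed one. I expect the main care to lie in checking these sign conventions; the rest is routine.

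Now take $\eta=(w_+,w_-)$. The gradient terms become $\int|\nabla w_+|^2$ and $k_i\int|\nabla w_-|^2$. For the boundary term we use the elementary monotonicity inequality
\[
(a-b)\big((a-M)^+-(b-M)^+\big)\ge\big((a-M)^+-(b-M)^+\big)^2\ge0 ,
\]
which holds because $t\mapsto(t-M)^+$ is nondecreasing and $1$-Lipschitz. Hence
\[
\int_{\widetilde\Omega}|\nabla w_+|^2+\sum_i k_i\int_{\cD_i}|\nabla w_-|^2+\gamma^{-1}\sum_i\int_{\partial\cD_i}|w_+-w_-|^2\le0 .
\]
It follows that every term vanishes. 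So $w_+$ is constant on each component of $\widetilde\Omega$, and $w_-$ is constant on each $\cD_i$.

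It remains to show these constants are zero. We assume $\widetilde\Omega$ is connected, as it is in the setting, with its boundary containing $\partial\Omega$. Since $w_+=0$ on $\partial\Omega$, we get $w_+\equiv0$. The boundary term then gives $w_-=w_+=0$ on $\partial\cD_i$, so $w_-\equiv0$ in each $\cD_i$. Therefore $u\le M$ everywhere in $\Omega$, which is the claim.
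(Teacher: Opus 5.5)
Your proof is correct and is essentially the paper's argument: you test against the positive parts $(u_\pm-M)^+$, integrate by parts with the given normal conventions, add the two inequalities so the interface terms combine into $\gamma^{-1}\int_{\partial\cD_i}(w_+-w_-)^2$, and conclude from the zero trace on $\partial\Omega$ and connectedness. Your monotonicity inequality for $t\mapsto(t-M)^+$ is a combined form of the paper's two pointwise bounds $v_1(u_+-u_-)\ge v_1^2-v_1v_2$ and $v_2(u_--u_+)\ge v_2^2-v_1v_2$.
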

	
	\begin{proof}
		By considering $u_\pm -\sup_{\partial \Omega} \varphi$ in place of $u_\pm$, we may assume $u_+\le 0$ on $\partial\Omega$, and it suffices to show that
		\begin{equation}\label{v_upper}
			u \le 0 \quad \mbox{in } \Omega.
		\end{equation}
		We denote $v_1:=\max\{u_+,0\}$ and $v_2:=\max\{u_-,0\}$. We multiply the first equation in \eqref{subrobin} by $v_1$ and integrate by parts in $\widetilde\Omega$. We have
		\begin{align*}
			0& \ge \int_{\widetilde\Omega} |\nabla v_1|^{2}- \sum_{j=1}^2 \int_{\partial \cD_j} v_1\,\partial_\nu u_+ \\
			& \ge \int_{\widetilde\Omega} |\nabla v_1|^{2}+ \gamma^{-1}\sum_{j=1}^2 \int_{\partial \cD_j} (v_1^2 -v_1 u_- )\\
			& \ge \int_{\widetilde\Omega} |\nabla v_1|^{2} +  \gamma^{-1}\sum_{j=1}^2 \int_{\partial \cD_j} (v_1^2 -v_1 v_2).
		\end{align*}
		We then multiply the second equation in \eqref{subrobin} by $k_j v_2$ and integrate by parts in $\cD_j$, $j = 1,2$. We have
		\begin{align*}
			0& \ge \sum_{j=1}^2 \int_{\cD_j} k_j |\nabla v_2|^{2} + \sum_{j=1}^2 k_j \int_{\partial \cD_j} v_2\,\partial_\nu u_- \\
			& \ge \sum_{j=1}^2 \int_{\cD_j} k_j |\nabla v_2|^{2}+ \gamma^{-1}\sum_{j=1}^2 \int_{\partial \cD_j} (v_2^2 -v_2 u_+) \\
			& \ge \sum_{j=1}^2 \int_{\cD_j} k_j |\nabla v_2|^{2} +  \gamma^{-1}\sum_{j=1}^2 \int_{\partial \cD_j} (v_2^2 -v_1 v_2).
		\end{align*}
		Summing up the two inequalities above yields
		$$
		\int_{\widetilde\Omega} |\nabla v_1|^{2} + \sum_{j=1}^2 \int_{\cD_j} k_j |\nabla v_2|^{2} +  \gamma^{-1}\sum_{j=1}^2 \int_{\partial \cD_j} (v_1- v_2)^2 \le 0.
		$$
		Therefore,
		$$
		\nabla v_1 \equiv 0 ~\mbox{in}~\widetilde\Omega, \quad \nabla v_2 \equiv 0 ~\mbox{in}~\cD_1 \cup \cD_2, \quad v_1 \equiv v_2 ~\mbox{on}~ \partial \cD_1 \cup \partial \cD_2.
		$$
		Since $v_1 = 0$ on $\partial\Omega$, it follows that $v_1 \equiv 0$ in $\widetilde \Omega$, and hence $v_2 \equiv 0$ in $\cD_1 \cup \cD_2$. Therefore, \eqref{v_upper} is proved.
	\end{proof}
	
	Using the maximum principle Lemma \ref{lem:maximum_principle}, we can show that the equation \eqref{robin} is equivalent to the variational problem \eqref{minimizer}.
	
	\begin{lemma}
		Let $n\ge 2$, $\Omega$, $\cD_1$, and $\cD_2$ be Lipschitz domains in $ \bR^n$, {and let $\varphi\in H^{1/2}(\partial\Omega)$. Then} $u$ is the minimizer of \eqref{minimizer} if and only if $u$ is a piecewise $ H^1$ solution to \eqref{robin}. Therefore, there exists a unique piecewise $ H^1$ solution $u$ to the equation \eqref{robin}.
	\end{lemma}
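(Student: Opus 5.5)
The plan is to run the standard direct method on $I$ over $\sA$ and identify its Euler--Lagrange equation with the weak form of \eqref{robin}. Existence of a minimizer is obtained first; then the Euler--Lagrange equation gives the "minimizer $\Rightarrow$ solution" direction; convexity of $I$ gives the converse; and Lemma \ref{lem:maximum_principle} gives uniqueness.

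\emph{Existence.} Extend $\varphi$ to some $\Phi\in H^1(\widetilde\Omega)$ and set $v_+=\Phi$, $v_-=0$. This shows $\sA$ is nonempty and $\inf I<\infty$. For coercivity, write $v_+=\Phi+w$ with $w\in H^1(\widetilde\Omega)$, $w=0$ on $\partial\Omega$. Poincaré's inequality then bounds $\|v_+\|_{H^1(\widetilde\Omega)}$ by $I[v]$ plus a constant.

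For $v_-$, the trace inequality gives $\|v_+\|_{L^2(\partial\cD_j)}\le C(I[v]+1)^{1/2}$. The boundary term then controls $\|v_-\|_{L^2(\partial\cD_j)}$, and the Poincaré-type inequality
\[
\|w\|_{L^2(\cD_j)}\le C\big(\|\nabla w\|_{L^2(\cD_j)}+\|w\|_{L^2(\partial\cD_j)}\big)
\]
on Lipschitz $\cD_j$ bounds $\|v_-\|_{H^1(\cD_j)}$. Hence minimizing sequences are bounded in $H^1(\widetilde\Omega)\times H^1(\cD_1\cup\cD_2)$. Extract weakly convergent subsequences. Traces converge strongly in $L^2(\partial\cD_j)$ by compactness of the trace map, and $\sA$ is weakly closed. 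Weak lower semicontinuity of the Dirichlet terms, together with strong convergence in the boundary term, yields a minimizer.

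\emph{Minimizer $\Rightarrow$ solution.} Perturb $v\mapsto u+t\eta$ with $\eta=(\eta_+,\eta_-)$ piecewise $H^1$ and $\eta_+=0$ on $\partial\Omega$. Setting the derivative at $t=0$ to zero gives
\[
\int_{\widetilde\Omega}\nabla u_+\cdot\nabla\eta_++\sum_j k_j\int_{\cD_j}\nabla u_-\cdot\nabla\eta_-+\sum_j\gamma^{-1}\int_{\partial\cD_j}(u_+-u_-)(\eta_+-\eta_-)=0 .
\]
This identity is taken as the definition of a weak piecewise $H^1$ solution of \eqref{robin}. To see that it encodes \eqref{robin}:
\begin{itemize}
\item compactly supported $\eta_\pm$ give $\Delta u_\pm=0$;
\item general $\eta_+$ with $\eta_-=0$ give $\partial_\nu u_+=-\gamma^{-1}(u_+-u_-)$ on $\partial\cD_j$, since $\nu$ points into $\cD_j$ and the integration by parts over $\widetilde\Omega$ produces $-\int\eta_+\partial_\nu u_+$;
\item $\eta_+=0$ with general $\eta_-$ gives $k_j\partial_\nu u_-=-\gamma^{-1}(u_+-u_-)$, where the sign now comes from the outward normal of $\cD_j$ being $-\nu$.
\end{itemize}
These boundary identities hold in $H^{-1/2}(\partial\cD_j)$.

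\emph{Solution $\Rightarrow$ minimizer.} Conversely, a piecewise $H^1$ solution satisfies the identity above; this is the same integration by parts read backwards, interpreted weakly. For any $v\in\sA$, put $\eta=v-u$, which is admissible. Then
\[
I[v]=I[u]+I_0[\eta]+2\cdot(\text{first variation})=I[u]+I_0[\eta]\ge I[u],
\]
where $I_0$ denotes the homogeneous quadratic form of $I$. So every solution is a minimizer.

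\emph{Uniqueness.} If $u^1,u^2$ are two solutions, their difference solves \eqref{robin} with $\varphi=0$. Applying Lemma \ref{lem:maximum_principle} to the difference and to its negative gives $u^1=u^2$. (Alternatively, strict convexity works: $I_0[\eta]=0$ forces $\eta$ constant in each component with matching traces and $\eta_+=0$ on $\partial\Omega$, hence $\eta\equiv0$.) Together with existence of a minimizer, this gives the unique solution.

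The main obstacle is the coercivity step, namely controlling $v_-$ inside the inclusions. There the only link to the boundary data is through the Robin term, so one must chain the trace inequality for $v_+$ with the Friedrichs-type inequality on each Lipschitz $\cD_j$. The remaining steps are routine.
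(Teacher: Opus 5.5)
Your proposal is correct and follows essentially the paper's argument: the direct method for existence, the Euler--Lagrange equation for harmonicity and the transmission conditions, and uniqueness from Lemma~\ref{lem:maximum_principle}. The only real difference is the converse direction. You prove ``solution $\Rightarrow$ minimizer'' via the expansion $I[v]=I[u]+I_0[v-u]$, whereas the paper gets it for free: solutions are unique, and the minimizer solves \eqref{robin}.

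One sign slip to fix. Since $\nu$ points into $\cD_j$, it is the outward normal of $\widetilde\Omega$ on $\partial\cD_j$, so integrating by parts over $\widetilde\Omega$ gives $+\int_{\partial\cD_j}\eta_+\partial_\nu u_+$, not $-\int_{\partial\cD_j}\eta_+\partial_\nu u_+$. With the sign you state you would get $\partial_\nu u_+=+\gamma^{-1}(u_+-u_-)$, not the correct condition $\partial_\nu u_+=-\gamma^{-1}(u_+-u_-)$ that you wrote.
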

	
	\begin{proof}
		First, by Lemma \ref{lem:maximum_principle}, we know that the equation \eqref{robin} has at most one solution. {For $\varphi\in H^{1/2}(\partial\Omega)$, a trace lifting makes $\sA$ nonempty. The boundary Poincar\'e and trace inequalities, together with the jump terms, 
        give coercivity on variations with zero outer trace. Weak lower semicontinuity yields a minimizer by the direct method, and strict positivity of the quadratic form on nonzero variations gives uniqueness.} Therefore, it suffices to show that the minimizer $u$ of \eqref{minimizer} satisfies \eqref{robin}.
		
		By a standard argument, one can see that the minimizer $u$ of \eqref{minimizer} is harmonic in $\widetilde\Omega$ and $\cD_1 \cup \cD_2$. It remains to show that $u$ satisfies the transmission conditions \eqref{robin}$_3$.
		
		We choose an arbitrary piecewise $H^1$ function $v$ such that $v = 0$ in $\cD_1 \cup \cD_2$ and on ${\partial (\Omega\setminus \overline \cD_2)}$. Then the equation
		\begin{equation*}
			0 = \left.\frac{d}{dt} \right|_{t = 0}  I[u + t v]
		\end{equation*}
		{gives}
		$$
		0 = \int_{\widetilde\Omega} \nabla u_+ \cdot \nabla v_+ + \gamma^{-1} \int_{\partial \cD_1} (u_+ - u_-) v_+.
		$$
		Using the harmonicity of $u_+$ and integration by parts, we have
		$$
		0 = \int_{\partial \cD_1} \big( \partial_\nu u_+ + \gamma^{-1}  (u_+ - u_-) \big) v_+,
		$$
		which implies 
		$$
		\partial_\nu u_+ =  -\gamma^{-1}(u_+-u_-)\qquad\mbox{on}~\partial \mathcal{D}_1.
		$$
		The rest of the transmission conditions follow similarly.
	\end{proof}
	
	Next, we show that the solution is H\"older continuous in the inclusions $\cD_1$ and $\cD_2$, and its gradient is also H\"older continuous outside of the narrow region.
	
	\begin{lemma}
		Let $n\ge 2$, $\beta \in (0,1)$, $\Omega$, $\cD_1$, and $\cD_2$ be $C^{1,\beta}$ domains in $\bR^n$. Let { $\varphi\in C^{1,\beta}(\partial \Omega)$ and} $u$ be the piecewise $H^1$ solution to the equation \eqref{robin}. Then there exists a positive constant $C$, depending only on $n$, $\beta$, $\gamma_0$, $R_0$, $\cD_1$, $\cD_2$, and  $\|\partial \Omega\|_{C^{1,\beta}}$, such that 
		\begin{align}
			\label{u_C0}
			\|u\|_{L^\infty(\Omega)}  & \le \|\varphi\|_{L^\infty(\partial \Omega)},\\
			\label{u_holder_inside}
			[u]_{C^\beta(\mathcal{D}_i)} &\le \frac{C}{k_i}\|\varphi\|_{L^\infty(\partial \Omega)}, \quad i = 1,2,\\
			\label{u_C1_outside}
			\|u\|_{{C^{1,\beta}}(\widetilde\Omega \setminus \Omega_{R_0/2})} &\le C \Big( 1 + \frac{1}{k_1} + \frac{1}{k_2}\Big)\|\varphi\|_{C^{1,\beta}(\partial \Omega)}.
		\end{align}
		
	\end{lemma}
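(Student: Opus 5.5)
The $L^\infty$ bound \eqref{u_C0} comes directly from Lemma \ref{lem:maximum_principle}. The pair $(u_+,u_-)$ is a solution, hence a subsolution, so $u\le\sup_{\partial\Omega}\varphi$. The pair $(-u_+,-u_-)$ solves \eqref{robin} with data $-\varphi$, so the same lemma gives $-u\le\sup_{\partial\Omega}(-\varphi)$. Together these give $\|u\|_{L^\infty(\Omega)}\le\|\varphi\|_{L^\infty(\partial\Omega)}$.

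For \eqref{u_holder_inside}, view $u_-$ in $\cD_i$ as a harmonic function with Neumann data
\[
\partial_\nu u_- = -\frac{1}{k_i\gamma}(u_+-u_-) \quad \text{on } \partial\cD_i .
\]
By \eqref{u_C0} this data is bounded by $2\|\varphi\|_{L^\infty}/(k_i\gamma_0)$. The subtlety is that the Neumann data is not known a priori to be small in a norm that controls $u_-$ up to constants. It is, however, bounded in $L^\infty(\partial\cD_i)$. Standard $W^{1,p}$ / $C^\beta$ estimates for the Neumann problem on the $C^{1,\beta}$ domain $\cD_i$ give
\[
\bigl[u_- - (u_-)_{\cD_i}\bigr]_{C^\beta(\cD_i)} \le C\,\|\partial_\nu u_-\|_{L^\infty(\partial\cD_i)} .
\]
These are estimates of De Giorgi--Nash--Moser type with $L^\infty$ conormal data; one can also use $W^{1,p}$ estimates for large $p$ plus Morrey. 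Since subtracting a constant does not change the seminorm, this yields $[u]_{C^\beta(\cD_i)}\le \frac{C}{k_i}\|\varphi\|_{L^\infty}$ with $C$ depending on $\gamma_0$ and $\cD_i$. The compatibility condition $\int_{\partial \cD_i}\partial_\nu u_-=0$ holds automatically, since $u_-$ is harmonic.

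For \eqref{u_C1_outside}, use that away from the narrow region the two inclusions are separated by a fixed positive distance. Then $u_+$ solves, near each boundary piece of $\widetilde\Omega\setminus\Omega_{R_0/2}$, a Robin problem
\[
\partial_\nu u_+ + \gamma^{-1}u_+ = \gamma^{-1}g ,\qquad g=u_-|_{\partial\cD_i},
\]
together with the Dirichlet condition $u_+=\varphi\in C^{1,\beta}$ on $\partial\Omega$. By the previous step, $\|g\|_{C^\beta}\le C(1+1/k_i)\|\varphi\|_{L^\infty}$. Interior and boundary Schauder estimates for the oblique/Robin problem on $C^{1,\beta}$ boundaries (for instance in the Lieberman / Gilbarg--Trudinger framework) then give local $C^{1,\beta}$ bounds in terms of $\|u_+\|_{L^\infty}$, $\|g\|_{C^\beta}$ and $\|\varphi\|_{C^{1,\beta}}$. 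A finite covering of $\widetilde\Omega\setminus\Omega_{R_0/2}$ by balls of radius comparable to $R_0$ then finishes the argument. The bound is also needed on the interior side for completeness. There $u_-$ has Neumann data $(k_i\gamma)^{-1}(u_- - u_+)$, whose $C^\beta$ norm is controlled once $u_+\in C^\beta$. So an iteration yields $u_-\in C^{1,\beta}$ away from the gap as well, with bound $C(1/k_i)(1+1/k_j)$-type terms absorbed into the stated constant.

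The main obstacle is the covering step near $\partial\Omega_{R_0/2}\cap\partial\cD_i$. There the balls must not reach into the gap where the two boundaries approach each other. This is handled by choosing radii $\sim R_0/4$ centered at points with $|x'|\ge R_0/2$, where the vertical gap is at least $c_1R_0^2/4$. The resulting constants may then depend on $R_0$, which is allowed.
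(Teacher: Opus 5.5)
Your proposal is correct and follows essentially the paper's route: the maximum principle (applied to $\pm u$) for \eqref{u_C0}; a $W^{1,p}$ Neumann estimate on $\cD_i$ with bounded data of size $C\|\varphi\|_{L^\infty}/k_i$, plus Morrey, for \eqref{u_holder_inside}; and local Lieberman-type boundary estimates away from the gap for \eqref{u_C1_outside}. The differences are minor. You keep $\gamma^{-1}u_+$ inside a Robin boundary operator, whereas the paper first bounds $[u_+]_{C^\beta}$ away from the gap by a local $W^{1,p}$ estimate and then treats $-\gamma^{-1}(u_+-u_-)$ as given $C^\beta$ Neumann data. Your extra interior $C^{1,\beta}$ step for $u_-$ is unnecessary, since $\widetilde\Omega\setminus\Omega_{R_0/2}$ lies outside the inclusions.
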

	
	\begin{proof}
		First, \eqref{u_C0} simply follows from the maximum principle Lemma \ref{lem:maximum_principle}. For $i = 1, 2$, $u_-$ satisfies
		$$
		\begin{cases}
			\Delta u_- = 0&\mbox{in}~\mathcal{D}_i,\\ 
			\partial_\nu u_- = -(k_i\gamma)^{-1}(u_+-u_-)&\mbox{on}~\partial \mathcal{D}_i.
		\end{cases}
		$$
		By the $W^{1,p}$ estimate for the Neumann boundary problem (see \cite{DongLi21}*{Theorem 2.6}), we have
		$$
		\| u_- - (u_-)_{\cD_i} \|_{W^{1,p}(\cD_i)} \le  \frac{C}{k_i} \| u_+ - u_-\|_{ L^{p} (\partial \cD_i)},
		$$
		which implies \eqref{u_holder_inside} by \eqref{u_C0} and Morrey's inequality. Recall that $u_+$ satisfies
		$$
		\begin{cases}
			\Delta u_+ = 0&\mbox{in}~\widetilde\Omega,\\ 
			\partial_\nu u_+ = {-}\gamma^{-1}(u_+-u_-)&\mbox{on}~\partial \mathcal{D}_1 \cup \partial \mathcal{D}_2,\\
			u_+ = \varphi &\mbox{on}~\partial \Omega.
		\end{cases}
		$$
		Away from the narrow gap region, one can apply a similar { local $W^{1,p}$ estimate} to see that
		$$
		[u_+]_{{C^{\beta}}(\widetilde\Omega \setminus \Omega_{R_0/4})} \le C \| \varphi\|_{C^\beta (\partial \Omega)}.
		$$
		Then one can apply the $C^{1,\beta}$ estimate for the Neumann boundary problem (see \cite{Lieberman87}*{Theorem 1.2}) to obtain
		$$
		\|u\|_{{C^{1,\beta}}(\widetilde\Omega \setminus \Omega_{R_0/2})} \le C \Big( [u_+]_{C^\beta(\widetilde\Omega \setminus \Omega_{R_0/4})} + {\|u_-\|_{C^\beta(\mathcal{D}_1)} + \|u_-\|_{C^\beta(\mathcal{D}_2)}} + \| \varphi\|_{C^{1,\beta} (\partial \Omega)} \Big),
		$$
		which implies \eqref{u_C1_outside}.
	\end{proof}

	\section{\texorpdfstring{$\varepsilon^{-1/2}$}{epsilon to the minus one-half} estimate}\label{sec:local-estimates}
	
	{The main result of this section is the following result on the $C^{1,\beta}$ estimate of solutions to \eqref{main_problem_narrow} in a narrow region.}
	
	\begin{theorem}
		\label{thm-1/2}
		Let $h_1$ and $h_2$ be $C^{1,1}$ functions satisfying \eqref{fg_0}--\eqref{def:c_2}, $n \ge 2$, $\gamma \ge \gamma_0 > 0$, $\varepsilon \in(0,R_0/4)$, and $u \in H^1({\Omega}_{R_0})$ be a solution of \eqref{main_problem_narrow} for some  functions $g_1,g_2 \in C^{\beta}_{x'}$ for some $\beta \in (0,1)$. Then there exists  a constant $C_1>0$ depending only on $n$, $\gamma_0$, $\beta$, $R_0$, $c_1$, and $c_2$, and a constant $C_2 >0$ further depending on $\gamma$, such that for any $x_0\in \Omega_{R_0/2}$ and $r=\frac{1}{4}(\varepsilon+|x'_0|^2)^{\frac{1}{2}}$, we have
        		\begin{equation}\label{gradient-1/2a}
			\begin{aligned}
				&
                r\|\nabla u\|_{L^\infty(\Omega_{x_0,r})} + r^{1+\beta} [\nabla u]_{C^\beta(\Omega_{x_0,r})} \\
				& \le  C_1 \Big( \dashnorm{u-(g_1(0') + g_2(0'))/2}{L^2(\Omega_{x_0,2r})}  + r|g_1(0') - g_2(0')|\\ 
				&+ r^{\beta}([g_1]_{C^\beta_{x'}(\Gamma^+_{x_0,4r})} + [g_2]_{C^\beta_{x'}(\Gamma^-_{x_0,4r})}) 
				\Big),
			\end{aligned}
		\end{equation}
        and 
		\begin{equation}\label{gradient-1/2}
			\begin{aligned}
				&r\|\nabla u\|_{L^\infty(\Omega_{x_0,r})} + r^{1+\beta} [\nabla u]_{C^\beta(\Omega_{x_0,r})} \\
				& \le  C_2 \Big( \dashnorm{u- (u)_{\Omega_{x_0,2r}}}{L^2(\Omega_{x_0,2r})}  + r|g_1(0') - g_2(0')|\\ 
				&+ r^{\beta}([g_1]_{C^\beta_{x'}(\Gamma^+_{x_0,4r})} + [g_2]_{C^\beta_{x'}(\Gamma^-_{x_0,4r})}) 
				{+r\|\partial_{x_n} u\|_{L^\infty(\Omega_{x_0,2r})}}
				\Big),
			\end{aligned}
		\end{equation}
		where $\Gamma^\pm_{x_0,4r} := \Gamma_\pm \cap \overline{\Omega_{x_0,4r}}$.
	\end{theorem}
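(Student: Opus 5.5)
Throughout, fix $x_0\in\Omega_{R_0/2}$ and set $r=\frac14(\varepsilon+|x_0'|^2)^{1/2}$. The plan is to rescale the narrow cell $\Omega_{x_0,4r}$ to unit size and apply a uniform $C^{1,\beta}$ estimate for a Robin (conormal) problem, after two preparatory steps: normalizing the Robin data and flattening the boundary.

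\textbf{Normalization.} Let $\bar g=(g_1(0')+g_2(0'))/2$ and $\delta=g_1(0')-g_2(0')$. Subtract from $u$ the affine function
\[
\ell(x)=\bar g+\frac{\delta\, x_n}{\varepsilon+2\gamma},
\]
which is harmonic. On $\Gamma_\pm$ one has $\ell+\gamma\partial_\nu\ell=g_{1,2}(0')+O(|\delta|\,|x'|)$, using $|\nabla h_i(x')|\le C|x'|$ and $|x_n|\lesssim \varepsilon+|x'|^2$. Hence $w:=u-\ell$ satisfies the same problem with data $\tilde g_i=g_i-g_i(0')+O(|\delta||x'|)$. This correction is $C^1$ in $x'$, so over a cell of radius $4r\lesssim 1$ it contributes at most $C r|\delta|$ to the final right-hand side. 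Moreover $\|\nabla\ell\|\le C|\delta|$, which accounts for the term $r|g_1(0')-g_2(0')|$.

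\textbf{Flattening and scaling.} Straighten $\Gamma_\pm$ in $\Omega_{x_0,4r}$ with the normal-preserving flattening map from the Appendix, then dilate by $r$. The cell becomes a domain of unit size in $x'$ whose height $\sim(\varepsilon+|x'|^2)/r\sim r$ is comparable to its width, thanks to the choice of $r$ and \eqref{fg_1}, \eqref{def:c_2}. The transformed coefficients $A$ are uniformly elliptic with a uniform modulus, and the Robin condition becomes
\[
A\nabla v\cdot\nu+\frac{r}{\gamma}\,v=\frac{r}{\gamma}\,\tilde g \quad\text{on the flat boundary pieces}.
\]
The zeroth-order coefficient $r/\gamma$ is small and has the good sign. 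I would then reflect evenly across the flat boundaries, as in \cite{DongYangZhu26}, to obtain a problem on a full neighborhood. The Robin data, and the conormal jumps it generates, are placed in a divergence-form forcing term $\dv F$ with $[F]_{C^\beta}\lesssim r^\beta[\tilde g]_{C^\beta}$; the normalization is what keeps these jumps from being huge. A standard interior or flat-boundary Schauder $C^{1,\beta}$ estimate for divergence-form equations with $C^\beta$ coefficients then gives
\[
\|\nabla v\|+[\nabla v]_{C^\beta}\le C\bigl(\|v\|_{L^2}+[F]_{C^\beta}\bigr).
\]
Scaling back yields \eqref{gradient-1/2a}, with $C_1$ depending on $\gamma$ only through $\gamma_0$, since $r/\gamma\le r/\gamma_0$.

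\textbf{The estimate \eqref{gradient-1/2} and the main obstacle.} The expected obstacle is replacing $\bar g$ by the average $(u)_{\Omega_{x_0,2r}}$ on the right-hand side. Subtracting a general constant $c$ instead of $\bar g$ produces the Robin data $(r/\gamma)(c-\bar g)$, which is not controlled a priori. I would handle this through the boundary condition itself: on $\Gamma_+$,
\[
u-g_1=-\gamma\partial_\nu u\approx-\gamma\partial_{x_n}u+O(|x'|\,|\nabla u|).
\]
Combined with the analogous identity on $\Gamma_-$, this bounds $|(u)_{\Omega_{x_0,2r}}-\bar g|$ by
\[
C\Bigl(\gamma\|\partial_{x_n}u\|_{L^\infty(\Omega_{x_0,2r})}+|\delta|+r^\beta[g]+\text{osc}\,u+r\|\nabla u\|\Bigr).
\]
The oscillation term is controlled by the $L^2$ deviation via the local estimate, and the $r\|\nabla u\|$ term has a small coefficient because $|x'|\lesssim r$ is small relative to scale, so it can be absorbed by a standard iteration over nested cells. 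The resulting constant depends on $\gamma$, giving $C_2$. The other delicate point, and the one I would verify most carefully, is that the reflected forcing term remains $C^\beta$ uniformly in $\varepsilon$, which depends on the precise form of the flattening map from the Appendix.
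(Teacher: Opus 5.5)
Your affine normalization and the idea of pushing the conormal jumps into a divergence-form forcing match the paper. However, the step that is supposed to give \eqref{gradient-1/2} loses a full factor of $r$.

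\textbf{The gap in \eqref{gradient-1/2}.} You use $u-g_1=-\gamma\partial_\nu u$ on $\Gamma_+$ and $u-g_2=-\gamma\partial_\nu u$ on $\Gamma_-$ separately. That can only give $|(u)_{\Omega_{x_0,2r}}-\bar g|\lesssim\gamma\|\partial_{x_n}u\|_{L^\infty}+|\delta|+\cdots$, which is what you wrote. Inserted into \eqref{gradient-1/2a}, this puts $C\gamma\|\partial_{x_n}u\|_{L^\infty(\Omega_{x_0,2r})}$ and $C|\delta|$ on the right-hand side, instead of $r\|\partial_{x_n}u\|_{L^\infty(\Omega_{x_0,2r})}$ and $r|\delta|$.

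The weaker inequality is useless where the estimate is needed. In Lemma \ref{lem:u_holder} and in the proof of Theorem \ref{thm_upperbound_2}, only $|\partial_{x_n}u|\le C$ is known. Your version then returns just $|\nabla u|\lesssim r^{-1}$, so the bootstrap never starts.

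The missing idea is the cancellation between the top and bottom fluxes, which you see only by integrating the equation over the cell. The Robin terms sit on the top and bottom, of area $\sim r^{n-1}$, while the lateral boundary has area $\sim r^{n}$. Hence $\gamma^{-1}|(u)-\bar g|\,r^{n-1}\lesssim r^{n}\|\nabla u\|+\cdots$, that is, $|(u)-\bar g|\lesssim\gamma r\|\nabla u\|+\cdots$. The paper implements this by dividing the transformed equation by its negative Lipschitz zeroth-order coefficient $\bar c_L\sim-\gamma^{-1}$ and integrating over $B_{1.1}$.

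Even then, the resulting $\gamma r\|\nabla_{x'}u\|$ term cannot be absorbed as you propose. On $\Omega_{x_0,2r}$, $|x'|$ is comparable to $r$, not small relative to it, so the coefficient is of order $\gamma$. The paper instead controls the horizontal derivatives by interpolation against $[\nabla\bar v]_{C^\beta_{y'}}$ and the oscillation. It keeps only the vertical derivative on the right-hand side, since that one cannot be interpolated across the thin layers.

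\textbf{The flaw in your route to \eqref{gradient-1/2a}.} After flattening and dilating by $r$, the cell has width $\sim 1$ but height $\sim r$. These are not comparable, and that is precisely the difficulty.

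To reach a unit ball, the paper reflects evenly and periodically across $\sim 1/r$ interfaces. Across each one $\partial_{y_n}\tilde v$ jumps, because the conormal data are nonzero, so the extended function is not $C^{1,\beta}$. The forcing $f^n=\sum_k\mathbbm{1}_{\{y_n>(2k+1)r^2\}}\cG_k$ is a step function in $y_n$ and is only $C^\beta$ in $y'$. So the bound $[F]_{C^\beta}\lesssim r^\beta[\tilde g]_{C^\beta}$ you planned to verify is false. A standard Schauder estimate does not apply; what is needed is the laminate estimate of \cite{Dong2012}. It bounds $\|\bar v\|_{W^{1,\infty}}$, $[\nabla\bar v]_{C^\beta_{y'}}$, and $[\nabla\bar v]_{C^\beta}$ within the original layer.

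Moreover, the Robin coefficient $r/\gamma$ is not a negligible perturbation. It acts on $\sim 1/r$ interfaces, so its cumulative effect is of order $\gamma^{-1}$, and it enters the jumps together with the unknown. The paper removes it with $\tilde v=\tilde u\,e^{\psi}$, which turns the Robin conditions into pure conormal conditions with data proportional to $\tilde g_i$. That substitution also produces the bulk coefficient $\bar c_L$, which is exactly what makes \eqref{gradient-1/2} provable.
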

	
		The proof is based on a local flattening and reflection argument in the spirit of \cite{DongYangZhu26}. A direct adaptation, however, is obstructed by the inhomogeneous Robin data: after the boundary conditions are converted into co-normal form, reflection would produce jumps that are too large to control. To overcome this difficulty, we first subtract an affine function $Ax_n+B$ so that {the new Robin data vanish} at $x_0'$. After flattening the boundaries, a suitable exponential change of variables is used to convert the Robin conditions into co-normal conditions and introduce a favorable negative zeroth-order coefficient with a Lipschitz leading term $c_L$. We then reflect evenly and periodically across the flattened boundaries and, following the idea of \cites{DongXu1,DongXu2}, absorb the remaining jumps of the co-normal derivative into a divergence-form forcing. The desired estimates follow from an interior $C^{1,\beta}$ estimate for laminate domains established in \cite{Dong2012}. The negative zeroth-order coefficient also yields an important improvement over \cite{DongYangZhu26}: it allows us to prove the estimate \eqref{gradient-1/2} with the oscillation of $u$ on the right-hand side {and an additional vertical derivative which will be shown to be bounded}. With this stronger estimate, we can treat more general domains and boundary data without imposing symmetry assumptions (see Theorems \ref{thm_upperbound_3} and \ref{thm_upperbound_2}).

	\begin{proof}
		It suffices to prove estimates \eqref{gradient-1/2a} and \eqref{gradient-1/2} for $r < r_0$, where $r_0$ is a small $\varepsilon$-independent constant to be specified during the proof.
		Without loss of generality, we assume $g_2(x_0')=-g_1(x_0')$, since otherwise we can consider the equation for $u-(g_1(x_0')+g_2(x_0'))/2$ instead of $u$.
		
		\textbf{Step 1:} Subtract a linear function in $x_n$.
		
		{Fix $x_0 \in \Omega_{R_0/2}$ and define}
		$$
		\hat{u}:= u - (A x_n + B).
		$$
		The first step is to find $A$ and $B$ such that $\hat u + \gamma \partial_\nu \hat u = 0$ at $x' = x_0'$ on $\Gamma_\pm$. By direct computations, we have
		on $\Gamma_+$,
		\begin{align*}
			\hat{u} + \gamma \partial_\nu \hat{u} &= g_1(x') - A \left( \frac{\varepsilon}{2} + h_1(x') + \frac{\gamma}{\sqrt{1 + |\nabla h_1|^2}} \right) - B;
		\end{align*}
		on $\Gamma_-$,
		\begin{align*}
			\hat{u} + \gamma \partial_\nu \hat{u} &= g_2(x') - A \left( -\frac{\varepsilon}{2} + h_2(x') - \frac{\gamma}{\sqrt{1 + |\nabla h_2|^2}} \right) - B.
		\end{align*}
		Set
		$$
		\left\{
		\begin{aligned}
			A \left( \frac{\varepsilon}{2} + h_1(x_0') + \frac{\gamma}{\sqrt{1 + |\nabla h_1(x_0')|^2}} \right) {+} B &= g_1(x_0'),\\
			A \left( -\frac{\varepsilon}{2} + h_2(x_0') - \frac{\gamma}{\sqrt{1 + |\nabla h_2(x_0')|^2}} \right) {+} B &= {g_2(x_0')}.
		\end{aligned}
		\right.
		$$
		Then one can solve for $A$ and $B$ explicitly, and see that
		\begin{equation}\label{est:ab}
			|A| \le C |g_1(x_0')|, \quad |B| \le C |x_0'|^2 |g_1(x_0')|.
		\end{equation}
		Thus, we conclude that $\hat{u}$ satisfies
		\begin{equation}\label{main_problem_narrow_2}
			\begin{cases}
				\Delta \hat u =0&\mbox{in}~\Omega_{R_0},\\
				\hat u + \gamma \partial_\nu \hat u = \tilde g_1&\mbox{on}~\Gamma_+,\\
				\hat u + \gamma \partial_\nu \hat u = \tilde g_2&\mbox{on}~\Gamma_-,
			\end{cases}
		\end{equation}
		with $\tilde g_1(x_0') = \tilde g_2(x_0') = 0$.

		\textbf{Step 2:} Flatten the boundaries.
		
		For any $x_0 \in \Omega_{R_0/2}$, we denote
		$$r:=\frac{1}{4}(\varepsilon+|x_0'|^2)^{1/2},$$
		and
		$$
		Q_{s,t} := \{ y = (y', y_n) \in \bR^n:~ |y'-x_0'| < s,
        \,|y_n| < t \},
		$$
		Using the flattening map $\Phi$ defined in \eqref{Phi} {which satisfies
$\Phi(Q_{1.9r, r^2})\subset \Omega_{x_0,2r}$ and $\Omega_{x_0,r} \subset \Phi(Q_{1.1r, r^2})$}, and defining $\tilde{u}(y) = \hat u(\Phi(y))$, where $ \hat u$ is a solution of \eqref{main_problem_narrow_2}, we know that $\tilde{u}(y)$ satisfies the equation
		\begin{equation}\label{tilde_u_equation}
			\left\{
			\begin{aligned}
				\partial_{y_i}(a^{ij}\partial_{y_j}\tilde u)&=0  &&\mbox{in }Q_{1.9r, r^2},\\
				\tilde u+\gamma \,F_1^{-1} \partial_{y_n}\tilde u&= \tilde g_1 &&\mbox{on } \{y_n=r^2\}\cap \overline{Q_{1.9r, r^2}},\\
				\tilde u-\gamma \, F_2^{-1} \partial_{y_n}\tilde u&= \tilde g_2 &&\mbox{on } \{y_n=-r^2\}\cap \overline{Q_{1.9r, r^2}},
			\end{aligned}
			\right.
		\end{equation}
		where 
		$a:=(a^{ij})_{n\times n}=(\nabla \Phi)^{-1}((\nabla\Phi)^{-1})^T{|\det(\nabla \Phi)|}\in C^{0,1}(\overline{Q_{1.9r, r^2}}:\bR^{n\times n})$ satisfies
		\begin{equation}\label{est:a}
			\frac{I}{C} \le a \le C I, \quad |\nabla_y a| \le \frac{C}{r},
		\end{equation}
		and
		\begin{equation}\label{ortho}
			{a^{nj}=a^{jn}}=0 \quad \mbox{on } \{y_n=\pm r^2\}\cap \overline{Q_{1.9r, r^2}}
		\end{equation}
		for any  $j\in \{1,2,\ldots, n-1\}$,
		and $F_i\in C^{0,1}(\overline{Q_{1.9r, r^2}})\cap C^{1,1}(Q_{1.9r, r^2})$ ($i=1,2$) are defined as
		\begin{equation*}
			\left\{
			\begin{aligned}
				&F_1(y):=\frac{1}{2r^2}(\varepsilon+h_1(y')-h_2(y'))\sqrt{1+|\nabla_{y'} \widehat h_1(y)|^2},\\
				&F_2(y):=\frac{1}{2r^2}(\varepsilon+h_1(y')-h_2(y'))\sqrt{1+|\nabla_{y'} \widehat h_2(y)|^2},
			\end{aligned}
			\right.
		\end{equation*}
		$\widehat{h}_i$ denotes the mollification defined in \eqref{mollification}. It follows from \eqref{est:mollification} that 
		\begin{equation}\label{F_bound}
			\left\{
			\begin{aligned}
				&\frac{1}{C}\le F_i \le C, \quad |\nabla_{y'} F_i|\le \frac{C}{r}, \quad |\partial_{y_n} F_i|\le C\, r^2,\\
				& |\nabla^2_{y'}F_i(y)|\le \frac{C\,r^2}{r^4-y_n^2},\quad |\partial_{y_n}\nabla_{y'}F_i(y)|\le \frac{C\,r^3}{r^4-y_n^2}, \quad 
				|\partial^2_{y_n}F_i(y)|\le \frac{C\,r^4}{r^4-y_n^2},
			\end{aligned}
			\right.
		\end{equation}
		Furthermore, we have
		\begin{equation}\label{est:a2}
			{a^{ii} \sim 1, \qquad |a^{ij}|} \le Cr \quad \mbox{for}~i \neq j. 
		\end{equation}
		
		\textbf{Step 3:} {Reduce to conormal boundary conditions}.
		
		Let $\tilde v= \tilde u \, e^{\psi}$, where $\psi=\psi(y)$ is a function to be determined. Note that by the chain rule, 
		$$\partial_{y_n}\tilde v=e^{\psi}(\partial_{y_n}\tilde u+\tilde u\,\partial_{y_n} \psi).$$
		We choose $$\psi=(y_n+r^2)(y_n-r^2)(\tilde{A}\,y_n+\tilde{B}),$$ where $\tilde A$ and $\tilde B$ are functions such that
		\begin{equation*}
			\left\{
			\begin{aligned}
				&\partial_{y_n}\psi(y',r^2)=2r^2(\tilde A r^2+\tilde B)=\gamma^{-1} F_1,\\
				&\partial_{y_n}\psi(y',-r^2)=-2r^2(-\tilde A r^2+\tilde B)=-\gamma^{-1} F_2,
			\end{aligned}
			\right.
		\end{equation*}
		namely, 
		$$\tilde A:=(4\gamma r^4)^{-1}(F_1-F_2) \quad \mbox{and}\quad \tilde B:=(4\gamma r^2)^{-1}(F_1+F_2).$$
		The equation \eqref{tilde_u_equation} becomes
		\begin{equation}\label{tilde_v_equation}
			\left\{
			\begin{aligned}
				\partial_{y_i}(a^{ij}\partial_{y_j}\tilde v) + b^i \partial_{y_i}\tilde v + c \tilde v &=0  &&\mbox{in }Q_{1.9r, r^2},\\
				\partial_{y_n}\tilde v&= e^\psi \frac{F_1}{\gamma} \tilde g_1 &&\mbox{on } \{y_n=r^2\}\cap \overline{Q_{1.9r, r^2}},\\
				\partial_{y_n}\tilde v&= -e^\psi \frac{F_2}{\gamma} \tilde g_2 &&\mbox{on } \{y_n=-r^2\}\cap \overline{Q_{1.9r, r^2}},
			\end{aligned}
			\right.
		\end{equation}
		where
		$$
		b^i = - 2a^{ij} \partial_{y_j} \psi,
		$$
		and
		$$
		c = -(\partial_{y_i}a^{ij})(\partial_{y_j} \psi) + a^{ij} (\partial_{y_i} \psi) (\partial_{y_j} \psi) - a^{ij} \partial_{y_i y_j} \psi.
		$$
		In light of \eqref{ortho}, the boundary conditions in \eqref{tilde_v_equation} can also be written as
		$$
		a^{nj} \partial_{y_j}\tilde v =  a^{nn} e^\psi \frac{F_1}{\gamma} \tilde g_1 \quad \mbox{on } \{y_n=r^2\}\cap \overline{Q_{1.9r, r^2}},
		$$
		and
		$$
		a^{nj} \partial_{y_j}\tilde v =  -a^{nn} e^\psi \frac{F_2}{\gamma} \tilde g_2 \quad \mbox{on } \{y_n=-r^2\}\cap \overline{Q_{1.9r, r^2}}.
		$$
		{Following the same computations} as in \cite{DongYangZhu26}, we have
		\begin{equation}\label{est:psi}
			|\psi| \le Cr^2, \quad |\nabla_y \psi| \le C, \quad |\nabla_{y'}^2 \psi| \le C, \quad |\nabla_{y'}\partial_{y_n} \psi| \le Cr^{-1}, \quad |\partial_{y_n}^2 \psi| \le C r^{-2}.
		\end{equation}
		Furthermore, by a direct computation, we have
		$$
		{ \partial_{y_n}^2 \psi  = 6 \tilde A y_n + 2\tilde B + 4y_n (y_n\partial_{y_n} \tilde A  + \partial_{y_n} \tilde B) + (y_n^2 - r^4) (2 \partial_{y_n} \tilde A + \partial_{y_n}^2 \tilde A y_n + \partial_{y_n}^2 \tilde B)}.
		$$
		{
			By \eqref{est:a}, \eqref{F_bound}, \eqref{est:a2}, and \eqref{est:psi}, we can conclude that the leading order term from $c$, denoted by $c_L$, is
			$$
			c_L:= - 2 a^{nn} \tilde B.
			$$
			It satisfies
			$$
			{\frac{1}{C\gamma r^2} \le -c_L \le \frac{C}{\gamma r^2}} \quad \mbox{and} \quad |\nabla c_L| \le \frac{C}{\gamma r^3} .
			$$
			The remaining terms from $c$, denoted by $c_R:= c- c_L$, satisfy
			$$
			|c_R| \le Cr^{-1}.
			$$
			Furthermore, direct computation yields
			$$
			|b| \le C.
			$$
		}
		
		\textbf{Step 4:} Extend and remove the discontinuities of the co-normal boundary conditions.
		
		We define
		$$
		\cQ_k : =  \{ y = (y', y_n) \in \bR^n:~ |y'-x_0'| < 1.9r,\, (2k-1)r^2 < y_n < (2k+1)r^2 \}
		$$
		for $k \in \bZ$,
		and denote the upper boundary of $\cQ_k$ by
		$$
		\Gamma_k := \{ y = (y', y_n) \in \bR^n:~ |y'-x_0'| < 1.9r,\, y_n = (2k+1)r^2 \}.
		$$
		Now for $i,j = 1,2,\ldots, n-1$, we extend $ a^{ij}$, $ a^{nn}$, $ b^i$, $c$, and $\tilde{v}$ evenly with respect to $y_n = r^2$, $ a^{in}$, $ a^{ni}$, and $b^n$ oddly with respect to $y_n = r^2$, and then periodically along $y_n$ so that the period is equal to $4r^2$. Such extension creates discontinuities of the co-normal derivatives across interfaces $\Gamma_k$. We adapt the idea in \cites{DongXu1,DongXu2} to remove the discontinuities. We denote
		$$
		[a^{nj} \partial_{y_j}\tilde v]_{\Gamma_k}:= a^{nj} \partial_{y_j}\tilde v|_{\Gamma_k}^+ - a^{nj} \partial_{y_j}\tilde v|_{\Gamma_k}^-
		$$
		to be the difference between the {upper and lower traces} of the co-normal derivative of $\tilde{v}$ across $\Gamma_k$. Then it follows from \eqref{tilde_v_equation} that $\tilde{v}$ satisfies
		\begin{equation}\label{tilde_v_equation_extend}
			\left\{
			\begin{aligned}
				\partial_{y_i}(a^{ij}\partial_{y_j}\tilde v) + b^i \partial_{y_i}\tilde v + c \tilde v &=0  &&\mbox{in }\cup_{k=- \infty}^\infty \cQ_k,\\
				[a^{nj} \partial_{y_j}\tilde v]_{\Gamma_k}&= \cG_k &&\mbox{on } \Gamma_k,
			\end{aligned}
			\right.
		\end{equation}
		where
		$$
		\cG_k(y') = \left\{
		\begin{aligned}
			&-2a^{nn} e^\psi \frac{F_1}{\gamma} \tilde g_1, &&\mbox{when}~k~\mbox{is even},\\
			&-2a^{nn} e^\psi \frac{F_2}{\gamma} \tilde g_2, &&\mbox{when}~k~\mbox{is odd}.
		\end{aligned}
		\right.
		$$
		For $m \sim \frac{1}{r}$, we define
		$$
		f^i(y):=  \delta_{in} \sum_{k=-m}^m \mathbbm{1}_{\{y_n > (2k{+}1)r^2\}} \cG_k(y') \quad \mbox{for}~i=1,\ldots,n,
		$$
		where $\delta$ is the  Kronecker delta and $\mathbbm{1}_{\bullet}$ is the indicator function. Then it follows from \eqref{tilde_v_equation_extend} that $\tilde{v}$ satisfies
		\begin{equation}\label{tilde_v_equation_extend_new}
			\left\{
			\begin{aligned}
				\partial_{y_i}(a^{ij}\partial_{y_j}\tilde v - f^{i}) + b^i \partial_{y_i}\tilde v + c \tilde v &=0  &&\mbox{in }\cup_{k=-m}^m \cQ_k,\\
				[a^{nj} \partial_{y_j}\tilde v - f^{{n}}]_{\Gamma_k}&= 0 &&\mbox{on } \Gamma_k.
			\end{aligned}
			\right.
		\end{equation}

	\textbf{Step 5:} Rescale and estimate.
		
		We consider the following rescaling
		{
			$$
			\begin{aligned}
				&\bar{v}(y)=\tilde v(x_0'+ry',ry_n), \quad \bar{a}(y)=a(x_0'+ry',ry_n), \quad \bar{b}(y)=r b(x_0'+ry',ry_n),\\
				&\bar{c}(y)=r^2 c(x_0'+ry',ry_n), \quad \bar{c}_L(y)=r^2 c_L(x_0'+ry',ry_n), \\
				&\bar{c}_R(y)=r^2 c_R(x_0'+ry',ry_n), \quad
				\bar{f}(y)=r f(x_0'+ry',ry_n).
			\end{aligned}
			$$
			Then by \eqref{tilde_v_equation_extend_new} $\bar{v}$ satisfies the equation
			\begin{equation}\label{eqn:extend}
				\partial_{y_i}(\bar a^{ij}\partial_{y_j}\bar v - \bar f^i) +\bar b^i \partial_{y_i}\bar v + \bar c \bar v =0  \quad \mbox{in}~ {B_{1.9}}:=\{ |y|< 1.9 \},
			\end{equation}
			with the coefficients satisfying
			\begin{equation}\label{est:coefficient}
				\begin{aligned}
					&\frac{I}{C} \le \bar a \le C I, \quad |\nabla_y \bar a| \le C, \quad |\bar{b}| \le Cr, \\
					&\frac{1}{C\gamma} \le -\bar{c}_L \le \frac{C}{\gamma}, \quad |\bar c_R| \le Cr, \quad \mbox{and} \quad |\nabla_y \bar c_L| \le \frac{C}{\gamma}.
				\end{aligned}
			\end{equation}
			By \cite{Dong2012}*{Theorem 2 and Remark 3}, we have
			\begin{equation*}\label{est:2012a}
				\begin{aligned}
					&
                    \| \bar{v}\|_{W^{1,\infty}({B_{1.1}})} 
					+ [\nabla \bar{v}]_{C^\beta_{{y'}}(B_{1.1})}
					+ [\nabla \bar{v}]_{C^\beta({\cQ_0})}
					\le C(\|\bar{v} \|_{L^2(B_{1.9})} + \| \bar f \|_{C_{y'}^\beta(B_{1.9})})\\
					&\le C (\|\bar{v} \|_{L^2(B_{1.9})} + \| \bar f \|_{C_{y'}^\beta(B_{1.9})}.
				\end{aligned}
                \end{equation*}
			Since $\bar{f}(0',y_n) = 0$, we have
			\begin{equation}\label{est:f_bar}
				\| \bar{f}\|_{L^\infty(B_{1.9})} + \| \bar f \|_{C_{y'}^\beta(B_{1.9})}  \le C r^\beta ([g_1]_{C^\beta_{x'}} + [g_2]_{C^\beta_{x'}}) {+ Cr^2|g_1(x_0')-g_2(x_0')|}.    
			\end{equation}
		{Reversing the scaling and the change of variables gives}
		$$
		\begin{aligned}
			&\|\hat u\|_{L^\infty(\Omega_{x_0,r})} +r\|\nabla\hat u\|_{L^\infty(\Omega_{x_0,r})} + r^{1+\beta} [\nabla \hat u]_{C^\beta(\Omega_{x_0,r})} \\
			&\le  C \left( \dashnorm{\hat u }{L^2(\Omega_{x_0,2r})} + r^{\beta}([g_1]_{C^\beta_{x'}} + [g_2]_{C^\beta_{x'}})+ r^2|g_1(x_0')-g_2(x_0')| \right).
		\end{aligned}
		$$
		From \eqref{est:ab}, we can compute that 
		$$\|\nabla(Ax_n + B)\|_{L^\infty({\Omega_{x_0,r}})} \le C |g_1(x_0')|,$$ and 
        $$\dashnorm{Ax_n + B}{L^2(\Omega_{x_0,2r})} \le C r^2|g_1(x_0')|.$$
        Finally, 
        $$
        |g_i(x_0') - g_i(0')| \le r^{\beta}[g_i]_{C^\beta_{x'}}, \quad i = 1,2.
        $$
        Hence \eqref{gradient-1/2a} follows.
        
Next, we show \eqref{gradient-1/2}. By the triangle inequality,
            \begin{equation}\label{est:2012}
				\begin{aligned}
					&\|\bar{v}\|_{W^{1,\infty}({B_{1.1}})} 
					+ [\nabla \bar{v}]_{C^\beta_{{y'}}(B_{1.1})}
					+ [\nabla \bar{v}]_{C^\beta({\cQ_0})}
					\le C(\|\bar{v} \|_{L^2(B_{1.9})} + \| \bar f \|_{C_{y'}^\beta(B_{1.9})})\\
					&\le C (\|\bar{v} - (\bar{v})_{B_{1.1}}\|_{L^2(B_{1.9})} + \| \bar f \|_{C_{y'}^\beta(B_{1.9})} + |(\bar{v})_{B_{1.1}}|).
				\end{aligned}
			\end{equation}
			We can also estimate
			\begin{equation}\label{est:mean-oscillation}
				\begin{aligned}
					&\| \bar{v} - (\bar{v})_{B_{1.1}}\|_{L^2(B_{1.9})}\\ & = \left( \int_{B_{1.9}} \big| \fint_{B_{1.1}} \bar{v}(y) - \bar{v}(x) \, dx  \big|^2 \, dy \right)^{1/2}\\
					&\le C \left( \int_{B_{1.9}}  \fint_{B_{1.9}} \big|{\bar{v}(y)-\bar{v}(x)}\big|^2 \,dx\,dy \right)^{1/2}\\
					&\le C \left( \int_{B_{1.9}} \big|\bar{v}(y) - (\bar{v})_{B_{1.9}} \big|^2  \, dy \right)^{1/2}\\
					&= C \| \bar{v} - (\bar{v})_{B_{1.9}}\|_{L^2(B_{1.9})}.
				\end{aligned}
			\end{equation}
	On the other hand, since $\bar c_L$ is negative and Lipschitz, we can divide the equation \eqref{eqn:extend} by $\bar c_L$ and obtain
			\begin{equation}\label{eqn:extend_divide}
				\partial_{y_i}\Big( \frac{\bar a^{ij}}{\bar{c}_L}\partial_{y_j}\bar v - \frac{\bar f^i}{\bar c_L}\Big) + \Big( \frac{\bar b^i}{\bar c_L} + \frac{\bar a^{ji} \partial_{y_j} \bar c_L}{\bar c_L^2} \Big) \partial_{y_i}\bar v  +  \bar v = \frac{\bar f^i \partial_{y_i} \bar c_L}{\bar {c}_L^2} {-} \frac{\bar{c}_R}{\bar{c}_L} \bar{v}  \quad \mbox{in}~ B_{1.9}.
			\end{equation}
			Now integrating the equation \eqref{eqn:extend_divide} in $B_{1.1}$ gives
			\begin{align*}
				\int_{B_{1.1}} \bar{v} =& - \int_{\partial B_{1.1}} \Big( \frac{\bar a^{ij}}{\bar{c}_L}\partial_{y_j}\bar v - \frac{\bar f^i}{\bar c_L}\Big) \nu^i - \int_{B_{1.1}} \Big( \frac{\bar b^i}{\bar{c}_L} + \frac{\bar a^{ji} \partial_{y_j} \bar c_L}{\bar c_L^2} \Big) \partial_{y_i}\bar v\\
				&+ \int_{B_{1.1}} \frac{\bar f^i \partial_{y_i} \bar c_L}{\bar c_L^2} {-} \int_{B_{1.1}} \frac{\bar{c}_R}{\bar{c}_L} \bar{v} .
			\end{align*}
			By \eqref{est:coefficient} and interpolation, for $\delta > 0$ small, we have
			\begin{equation}\label{est:interpolate}
				\begin{aligned}
					|(\bar{v})_{B_{1.1}}| \le &C \gamma \Big( \|\nabla \bar{v}\|_{L^\infty(B_{1.1})} + \|\bar f\|_{L^\infty(B_{1.1})} + r \| \bar v \|_{L^2(B_{1.1})} \Big)\\
					\le &C \gamma \Big( \|\nabla_{y'} \bar{v}\|_{L^\infty(B_{1.1})}
						+\|\partial_{y_n} \bar{v}\|_{L^\infty(B_{1.1})}
						+ \|\bar f\|_{L^\infty(B_{1.1})} + r \| \bar v \|_{L^2(B_{1.1})} \Big)\\
					\le &\delta [\nabla\bar{v}]_{C^\beta_{{y'}}(B_{1.1})} + C_{\delta,\gamma} \| \bar{v} - (\bar{v})_{B_{1.9}}\|_{L^2(B_{1.9})} 
					{+C\gamma\|\partial_{y_n} \bar{v}\|_{L^\infty(B_{1.1})}}\\
					&+ C \gamma \|\bar f\|_{L^\infty(B_{1.1})}
					+ C \gamma r_0 \| \bar{v} - (\bar{v})_{B_{1.1}}\|_{L^2(B_{1.1})} + C\gamma r_0|(\bar{v})_{B_{1.1}}|.
				\end{aligned}
			\end{equation}
			Now we choose $r_0$ satisfying $C\gamma r_0 < 1$ so that the last term can be absorbed by the left-hand side. 
			Therefore, combining \eqref{est:f_bar}, \eqref{est:mean-oscillation}, \eqref{est:interpolate}, {and choosing a sufficiently small $\delta$,} the estimate \eqref{est:2012} implies}
	\begin{equation}
	                           \label{eq1.15}
        \begin{aligned}
			&\|\bar{v}\|_{W^{1,\infty}(B_{1.1})} + [\nabla \bar{v}]_{C^\beta_{y'}(B_{1.1})} 
			+ [\nabla \bar{v}]_{C^\beta({\cQ_0})}
			\\
			&\le  C_\gamma \big( \| \bar{v} - (\bar{v})_{B_{1.9}}\|_{L^2(B_{1.9})} + r^{\beta}([g_1]_{C^\beta_{x'}} + [g_2]_{C^\beta_{x'}}) \\
			&\qquad + r^2|g_1(x_0')-g_2(x_0')| {+\|\partial_{y_n} \bar{v}\|_{L^\infty(B_{1.1})}}\big).    
		\end{aligned}
    \end{equation} 
        Here we bound the first term on the right-hand of \eqref{eq1.15} by
\begin{align*}
&\dashnorm{\tilde{v} - (\tilde{v})_{Q_{1.9r,r^2}}}{L^2(Q_{1.9r,r^2})}\\
&\le C\dashnorm{\tilde{u} - (\tilde{u})_{Q_{1.9r,r^2}}}{L^2(Q_{1.9r,r^2})}
+ C|(\tilde{u})_{Q_{0.9r,r^2}}|
\osc_{Q_{1.9r,r^2}}\psi.
\end{align*}
Since $\osc_{Q_{1.9r,r^2}}\psi\le Cr^2$, for sufficiently small $r_0$, the last term on the right-hand side above can be absorbed to the left-hand side of \eqref{eq1.15}. 
        Reversing the scaling and the change of variables gives
		\begin{align*}
			&\|\hat u\|_{L^{\infty}(\Omega_{x_0,r})} +r\|\nabla\hat u\|_{L^{\infty}(\Omega_{x_0,r})} + r^{1+\beta} [\nabla \hat u]_{C^\beta(\Omega_{x_0,r})} \\
			&\le   C_\gamma \left( \dashnorm{\hat u - (\hat{u})_{\Omega_{x_0,2r}}}{L^2(\Omega_{x_0,2r})} + r^{\beta}([g_1]_{C^\beta_{x'}} + [g_2]_{C^\beta_{x'}}) \right.\\
			&\qquad\left. + r^2|g_1(x_0')-g_2(x_0')|{+r\|\partial_{x_n} \hat{u}\|_{L^\infty(\Omega_{x_0,2r})}} \right)
		\end{align*}
for $r_0$ sufficiently small. 
Arguing as before, we obtain \eqref{gradient-1/2}.
	\end{proof}
	
	\section{A dimensional reduction argument}\label{sec4}
	
	Note that the weak formulation of \eqref{main_problem_narrow} is given by 
	$$
	\int_{\Omega_{R_0}} \nabla u \cdot \nabla \varphi \, dx + \frac{1}{\gamma} \int_{\Gamma_+} (u-g_1) \varphi \, d\sigma+ \frac{1}{\gamma} \int_{\Gamma_-} (u-g_2) \varphi \, d\sigma = 0
	$$
	for all smooth $\varphi$ that {vanish} on $\{|x'| = R_0\}$.
	First, we apply the same flattening map and change of variables as in \cite{DongYangZhu26}. Let \begin{equation}\label{x_to_y}
		\left\{
		\begin{aligned}
			y' &= x' ,\\
			y_n &= 2 \varepsilon \left( \frac{x_n - h_2(x') + \varepsilon/2}{\varepsilon + h_1(x') - h_2(x')} - \frac{1}{2} \right),
		\end{aligned}\right.
		\quad \forall (x',x_n) \in \Omega_{R_0},
	\end{equation}
	and let $v(y) = u(x)$. Then one can deduce as in \cite{DongYangZhu26}*{Section 5}  that $v$ satisfies 
	\begin{equation}\label{equation_v}
		\left\{
		\begin{aligned}
			-\partial_i(a^{ij}(y) \partial_j v(y)) &=0 \quad \mbox{in } Q_{R_0, \varepsilon},\\
			\frac{2\varepsilon}{\gamma} \sqrt{1+ |\nabla_{y'}h_1|^2} v(y) + a^{nj}(y) \partial_j v(y) &= \frac{2\varepsilon}{\gamma} \sqrt{1+ |\nabla_{y'}h_1|^2} g_1(y') \quad \mbox{on } \{y_n = \varepsilon\},\\
			\frac{2\varepsilon}{\gamma} \sqrt{1+ |\nabla_{y'}h_2|^2} v(y) - a^{nj}(y) \partial_j v(y) &=\frac{2\varepsilon}{\gamma} \sqrt{1+ |\nabla_{y'}{h_2}|^2} g_2(y')  \quad \mbox{on } \{y_n = -\varepsilon\},
		\end{aligned}
		\right.
	\end{equation}
	where 
	\begin{equation}\label{a_ij_formula}
		\begin{aligned}
			(a^{ij}(y)) =& \frac{ 2 \varepsilon(\partial_x y)(\partial_x y)^t}{\det (\partial_x y)} \\
			=& \begin{pmatrix}
				\varepsilon + \mu|y'|^2 &0 &\cdots &0 &a^{1n}\\
				0 &\varepsilon + \mu|y'|^2 &\cdots &0 &a^{2n}\\
				\vdots &\vdots &\ddots &\vdots &\vdots\\
				0 &0 &\cdots &\varepsilon + \mu|y'|^2 &a^{n-1,n}\\
				a^{n1} &a^{n2} &\cdots &a^{n,n-1} &  \frac{4\varepsilon^2 + \sum_{i=1}^{n-1}|a^{in}|^2}{\varepsilon + h_1(y') - h_2(y')}
			\end{pmatrix}\\
			&+
			\begin{pmatrix}
				e^1 &0 &\cdots &0 &0\\
				0 &e^2 &\cdots &0 &0\\
				\vdots &\vdots &\ddots &\vdots &\vdots\\
				0 &0 &\cdots & e^{n-1} &0\\
				0 &0 &\cdots & 0 &0
			\end{pmatrix},
		\end{aligned}
	\end{equation}
	
	\begin{align*}
		a^{ni} = a^{in} &= -2\varepsilon \partial_i h_2(y') - (y_n + \varepsilon)\partial_i(h_1(y') - h_2(y'));\\
		e^{i} &= h_1(y')-h_2(y')-\mu|y'|^2 \quad \mbox{for}~i= 1,2,\ldots, n -1,
	\end{align*}
	and
	\begin{equation}\label{Q_s_t}
		Q_{s,t}:= \{ y = (y',y_n) \in \bR^n ~\big|~  |y'| < s,  |y_n| < t\}.
	\end{equation}
	Then we define the vertical average
	\begin{equation}
		\label{v_bar_def}
		\bar{v}(y') := \fint_{-\varepsilon}^\varepsilon v(y',y_n)\, dy_n.
	\end{equation}
	By \eqref{equation_v}, $\bar{v}$ satisfies
	\begin{equation*}
		\begin{aligned}
			&\dv((\varepsilon+\mu|y'|^2)\nabla \bar v)\\
			=& - \frac{1}{2\varepsilon}\sum_{j=1}^{n} (a^{nj} \partial_j v)(y',\varepsilon) + \frac{1}{2\varepsilon}\sum_{j=1}^{n} (a^{nj} \partial_j v)(y',-\varepsilon) \\
			&-\sum_{i=1}^{n-1}\partial_i(\overline{a^{in}\partial_n v}) - \sum_{i=1}^{n-1} \partial_i(e^i \partial_i \bar v)\\
			=& \gamma^{-1}\sqrt{1+ |\nabla_{y'}h_1|^2} (v(y', \varepsilon)-g_1(y'))+ \gamma^{-1}\sqrt{1+ |\nabla_{y'}h_2|^2} (v(y', -\varepsilon)-g_2(y'))\\
			&-\sum_{i=1}^{n-1}\partial_i(\overline{a^{in}\partial_n v}+e^i \partial_i \bar{v}) \\
			=& 2\gamma^{-1} \bar{v}(y')    \underbrace{-\sum_{i=1}^{n-1}\partial_i(\overline{a^{in}\partial_n v}+e^i \partial_i \bar{v})}_{:= \dv F}\\
			&+\underbrace{\gamma^{-1}\sqrt{1+ |\nabla_{y'}h_1|^2} v(y', \varepsilon) + \gamma^{-1}\sqrt{1+ |\nabla_{y'}h_2|^2} v(y', -\varepsilon) - 2\gamma^{-1} \bar{v}(y')}_{:= G_1}\\&+
			\underbrace{-\gamma^{-1}\sqrt{1+ |\nabla_{y'}h_1|^2} g_1(y') - \gamma^{-1}\sqrt{1+ |\nabla_{y'}h_2|^2} g_2(y')}_{:= G_2},
		\end{aligned}
	\end{equation*}
	where $\overline{a^{in}\partial_n v}$ denotes the vertical average of $a^{in}\partial_n v$ with respect to $y_n \in (-\varepsilon, \varepsilon)$. 
	{We use this notation} throughout this article. 
	
	Summarizing this dimension reduction argument, we have the following lemma.
	
	\begin{lemma}\label{lemma_reduction}
		Let $h_1,h_2{\in C^{2,\sigma}}$ satisfy \eqref{fg_0} -- \eqref{def:c_2}, $u \in H^1(\Omega_{R_0})$ be a solution of \eqref{main_problem_narrow}. Perform  the change of variables \eqref{x_to_y}, and let $v(y) = u(x)$, $\bar{v}$ be defined as in \eqref{v_bar_def}. Then $\bar{v}$ satisfies the equation
		\begin{equation}\label{equation_v_bar}
			\dv((\varepsilon+\mu|y'|^2)\nabla \bar v) - 2\gamma^{-1} \bar{v} = \dv F + G_1+G_2 \quad \mbox{in}~B_{R_0} \subset \bR^{n-1},
		\end{equation}
		where
		\begin{equation}\label{FG_bound}
			\begin{aligned}
				|F(y')| &\le C\Big(\varepsilon|y'| \overline{|\partial_n v|} + |e^1| |\nabla_{y'}\overline{v}|\Big), \\
				|G_1(y')| &\le C\Big(\varepsilon \max_{y_n \in (-\varepsilon, \varepsilon)} |\partial_n v| + |y'|^2 |\overline{v}| \Big),\\
				G_2(y')&=-\gamma^{-1}\sqrt{1+ |\nabla_{y'}h_1|^2} g_1(y') - \gamma^{-1}\sqrt{1+ |\nabla_{y'}h_2|^2} g_2(y'),
			\end{aligned}
		\end{equation}
		and $C$ is some positive constant depending only on $n$, $\gamma$, $\|h_1\|_{C^{2,\sigma}}$, and $\|h_2\|_{C^{2,\sigma}}$.
	\end{lemma}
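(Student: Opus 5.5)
The plan is to make the computation preceding the statement rigorous and then bound each remainder term using the structure of $a^{ij}$ in \eqref{a_ij_formula}.

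\textbf{Step 1: weak form of the transformed problem.} First I will derive \eqref{equation_v} in weak form from the weak formulation of \eqref{main_problem_narrow}. The change of variables \eqref{x_to_y} is bi-Lipschitz on $\Omega_{R_0}$, with Jacobian determinant $2\varepsilon/(\varepsilon+h_1-h_2)$. Pulling back the surface measure turns $d\sigma$ on $\Gamma_\pm$ into $\sqrt{1+|\nabla_{y'}h_i|^2}\,dy'$. Multiplying by the Jacobian factor gives exactly the boundary weights $\frac{2\varepsilon}{\gamma}\sqrt{1+|\nabla h_i|^2}$. The entries of $a^{ij}$ follow from computing $\partial_x y$:
\begin{itemize}
\item the tangential block is $(\varepsilon+h_1-h_2)I$, which I write as $(\varepsilon+\mu|y'|^2)I+e^iI$;
\item the mixed entries are $a^{in}=-2\varepsilon\partial_ih_2-(y_n+\varepsilon)\partial_i(h_1-h_2)$;
\item $a^{nn}$ takes the displayed form.
\end{itemize}

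\textbf{Step 2: test with vertically constant functions.} Next I will test the weak form of \eqref{equation_v} with $\phi(y')$, independent of $y_n$ and compactly supported in $B_{R_0}$, and divide by $2\varepsilon$. Since $\partial_n\phi=0$, only the rows $i\le n-1$ enter. The term $(\varepsilon+\mu|y'|^2+e^i)\partial_i v$ averages vertically to $(\varepsilon+\mu|y'|^2+e^i)\partial_i\bar v$, because these coefficients do not depend on $y_n$. This is the key structural point that makes the reduction exact. The term $a^{in}\partial_nv$ leaves $\overline{a^{in}\partial_n v}$. The boundary integrals give $\gamma^{-1}\sqrt{1+|\nabla h_i|^2}\,(v(y',\pm\varepsilon)-g_i)$. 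Regrouping these yields \eqref{equation_v_bar} distributionally, with $F$, $G_1$, $G_2$ as defined.

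\textbf{Step 3: bound $F$.} By \eqref{fg_0} and \eqref{fg_3}, $|\partial_ih_2|\le C|y'|$ and $|\partial_i(h_1-h_2)|\le C|y'|$, while $|y_n+\varepsilon|\le 2\varepsilon$. Hence $|a^{in}|\le C\varepsilon|y'|$, so $|\overline{a^{in}\partial_nv}|\le C\varepsilon|y'|\,\overline{|\partial_nv|}$. The term $e^i\partial_i\bar v$ is trivially bounded by $|e^1||\nabla_{y'}\bar v|$, since all the $e^i$ coincide.

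\textbf{Step 4: bound $G_1$.} I will split $G_1$ as
\[
\gamma^{-1}\big(\sqrt{1+|\nabla h_1|^2}-1\big)v(y',\varepsilon)+\gamma^{-1}\big(\sqrt{1+|\nabla h_2|^2}-1\big)v(y',-\varepsilon)+\gamma^{-1}\big(v(y',\varepsilon)+v(y',-\varepsilon)-2\bar v\big).
\]
For the first two pieces I use $\sqrt{1+|\nabla h_i|^2}-1\le C|y'|^2$ and $|v(y',\pm\varepsilon)|\le|\bar v|+2\varepsilon\max|\partial_nv|$. The resulting cross term is absorbed into $C\varepsilon\max|\partial_nv|$. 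For the last piece, each of $|v(y',\pm\varepsilon)-\bar v|$ is at most $2\varepsilon\max_{y_n}|\partial_nv|$ by the mean value theorem. The bound on $G_2$ is an identity.

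\textbf{Main obstacle.} The most delicate point is justifying the traces and the pointwise maxima. This needs $v$ to be $C^1$ up to $y_n=\pm\varepsilon$ for a.e.\ $y'$, which I would take from Theorem \ref{thm-1/2}, applied locally with $C^\beta$ data. The remaining concern is checking that $h_1,h_2\in C^{2,\sigma}$ makes $a^{ij}$ Lipschitz enough for the integration by parts, and that the distributional identity holds on all of $B_{R_0}$.
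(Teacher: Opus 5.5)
Your proposal is correct and follows essentially the same route as the paper: flatten via \eqref{x_to_y}, average \eqref{equation_v} vertically, use the Robin conditions to turn the top and bottom fluxes into $\gamma^{-1}\sqrt{1+|\nabla h_i|^2}\,(v(y',\pm\varepsilon)-g_i)$, and bound $F$ and $G_1$ via $|a^{in}|\le C\varepsilon|y'|$, $\sqrt{1+|\nabla h_i|^2}-1\le C|y'|^2$, and the mean value theorem in $y_n$; the paper cites these bounds from the authors' earlier work, and your testing with $y_n$-independent $\phi(y')$ (equivalently $\phi(x')$ in the original weak form) is just the weak version of its integration in $y_n$. Two minor points: you do not need \eqref{fg_3}, since \eqref{fg_0} and \eqref{def:c_2} already give $|\nabla h_i|\le C|y'|$, and the weak formulation yields the identity on $B_{R_0}$ with no integration by parts, so regularity matters only for interpreting the traces and $\max_{y_n}|\partial_n v|$ pointwise.
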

	Note that the estimates for $F$ and $G_1$ follow from \cite{DongYangZhu26}*{Lemma 5.1}, and they will be higher order terms compared to the homogeneous part. The term $G_2$ plays an important role in the critical case $\gamma = 1/\mu$ in Theorems \ref{thm_upperbound} and \ref{thm_upperbound_3}.

	\section{Proofs of Theorems \ref{thm_upperbound} and \ref{thm_upperbound_2}}\label{sec:upper-bounds}
	
	In this section, we prove Theorems \ref{thm_upperbound} and \ref{thm_upperbound_2}. Without loss of generality, we may assume $\mu = 1$. Namely, we consider
	\begin{equation*}
		h_1(x')- h_2(x') =|x'|^2 + O(|x'|^{2+\sigma}) \quad\mbox{for}~~|x'|<R_0,
	\end{equation*}
	and 
	$$\nabla h_1(x')-\nabla h_2(x')=2x'+O(|x'|^{1+\sigma}) \quad\mbox{for}~~|x'|<R_0.$$
	In this case,
	\begin{equation}
		\label{alpha_new}
		\alpha = \frac {-(n-1)+\sqrt{(n-1)^2+4(n-2 +  2/\gamma)}}{2},
	\end{equation}
	\begin{equation}\label{alpha_new_zero}
		\alpha_0 = \frac{-(n-1) + \sqrt{(n-1)^2 + 8/\gamma}}{2}.
	\end{equation}
	For general $\mu > 0$, we only need to replace $\varepsilon$ and $\gamma$ in the proof by $\varepsilon/\mu$ and $\mu\gamma$ respectively, in view of \eqref{equation_v_bar}. {The strategy of the proof is based on the proof of \cite{DongYangZhu26}*{Theorem 1.4}. However, the extra nonhomogeneous term $G_2$ in the equation \eqref{equation_v_bar} introduces an additional level of difficulty and a new blow-up phenomenon in the critical case $\gamma=1/\mu$. See \eqref{grad_u_upperbound3}.}
	
	For $\varepsilon > 0$, $\sigma, s \in \bR$, an open set $\cD \subset \bR^{n-1}$, we introduce the following norm
	\begin{equation}\label{def_norm}
		\| F \|_{\varepsilon, \sigma,s,\cD}: = \sup_{y' \in \cD} \frac{|F(y')|}{|y'|^{\sigma} (\varepsilon + |y'|^2)^{1-s}}.
	\end{equation}
	Throughout this section, $B_R$ always lies in $\bR^{n-1}$. For any $0 < R < R_0$, we denote
	$$
	(u)_{\partial B_R}:= \fint_{\partial B_R} u \, dS
	$$
	to be the average of $u$ over $\partial B_R$. 
	
	First, we establish estimates for $\bar{v}$.
	\begin{proposition}
		\label{prop_grad_v_bar_control}
		For $n \ge 2$, $0 \le s \le 1/2$, $\gamma > 0$, $\sigma> 0$, and $\varepsilon > 0$,
		let $\bar{v} \in H^1({B_{R_0}})$ be a solution of
		\begin{equation*}
			\dv((\varepsilon+|y'|^2)\nabla \bar v) -2\gamma^{-1}\bar{v} = \dv F + G \quad\text{in}\,\,B_{R_0}\subset \bR^{n-1},
		\end{equation*}
		where $F,G$ satisfy
		\begin{equation}
			\label{FG_bound3}
			\|F\|_{\varepsilon, \sigma,s,B_{R_0}}< \infty , \quad \|G\|_{\varepsilon, \sigma-1,s,B_{R_0}} < \infty.
		\end{equation}
		Let $\alpha$, $\alpha_0$ be the constants given in \eqref{alpha_new}, \eqref{alpha_new_zero}, and $\tilde\alpha := \min \{ \alpha, 1, 1 + \sigma - 2s \}$, $\tilde\alpha_0 := \min \{ \alpha_0, 1, 1 + \sigma - 2s \}$ . There exists a positive constant $C$ depending only on $n$, $\sigma$, $s$, but independent of $\varepsilon$, such that for any $R \in (0, R_0/2)$, we have:
		\begin{itemize}
			\item[(i)] If $1+\sigma - 2s \neq \min\{\alpha,1\}$, then
			\begin{equation}
				\label{v_bar_L2_sphere_old}
				\begin{aligned}
					&\dashnorm{\bar{v} - (\bar{v})_{\partial B_R}}{L^2(\partial B_R)}\\
					&\le C(\|F\|_{\varepsilon, \sigma,s,B_{R_0}} + \|G\|_{\varepsilon, \sigma-1,s,B_{R_0}} +  \dashnorm{\bar v - (\bar{v})_{\partial B_{R_0}}}{L^2(\partial B_{R_0})})  R^{\tilde\alpha}.
				\end{aligned}
			\end{equation}
			\item[(ii)] If $1+\sigma - 2s = \min\{\alpha,1\}$, then
			\begin{equation}
				\label{v_bar_L2_sphere}
				\begin{aligned}
					&\dashnorm{\bar{v} - (\bar{v})_{\partial B_R}}{L^2(\partial B_R)}\\
					&\le C(\|F\|_{\varepsilon, \sigma,s,B_{R_0}} + \|G\|_{\varepsilon, \sigma-1,s,B_{R_0}} +  \dashnorm{\bar v - (\bar{v})_{\partial B_{R_0}}}{L^2(\partial B_{R_0})})  R^{\tilde\alpha}\log(R_0/R).
				\end{aligned}
			\end{equation}
			
			\item[(iii)] If $1+\sigma - 2s \neq \min\{\alpha_0,1\}$, then
			\begin{equation}
				\label{v_bar_L2_sphere_new}
				\begin{aligned}
					&\dashnorm{\bar{v} - \bar{v}(0)}{L^2(\partial B_R)}\\
					&\le C(\|F\|_{\varepsilon, \sigma,s,B_{R_0}} + \|G\|_{\varepsilon, \sigma-1,s,B_{R_0}} +  \dashnorm{\bar v - \bar{v}(0)}{L^2(\partial B_{R_0})})  (\varepsilon+R^2)^{\tilde\alpha_0/2}.
				\end{aligned}
			\end{equation}
		\end{itemize}
		
	\end{proposition}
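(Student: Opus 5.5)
We plan to follow the spherical harmonic decomposition used for the analogous statement in \cite{DongYangZhu26}, where the operator is $\dv((\varepsilon+|y'|^2)\nabla\cdot)-2\gamma^{-1}$. Write, in polar coordinates $y'=\rho\theta$ on $B_{R_0}\subset\bR^{n-1}$,
\[
\bar v(\rho\theta)=\sum_{k\ge0}\sum_{l} V_{k,l}(\rho)Y_{k,l}(\theta),
\]
with $Y_{k,l}$ an orthonormal basis of spherical harmonics of degree $k$ on $\bS^{n-2}$, so that $-\Delta_{\bS^{n-2}}Y_{k,l}=k(k+n-3)Y_{k,l}$. Expand $G$ in the same way, and for $F$ use the radial component and the tangential divergence. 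Each coefficient then solves the ODE
\[
\rho^{2-n}\big(\rho^{n-2}(\varepsilon+\rho^2)V_{k}'\big)'-\Big(\frac{(\varepsilon+\rho^2)k(k+n-3)}{\rho^2}+\frac{2}{\gamma}\Big)V_k=\text{(data)}_k .
\]
For $\rho^2\gg\varepsilon$ the homogeneous equation has the approximate Euler exponents $\lambda$ solving $\lambda(\lambda+n-1)=k(k+n-3)+2/\gamma$. For $k=1$ this gives exactly $\alpha$ in \eqref{alpha_new}, and for $k=0$ it gives $\alpha_0$ in \eqref{alpha_new_zero}. These exponents are increasing in $k$, so in parts (i)--(ii) the $k\ge1$ modes decay at least like $R^{\alpha}$. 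This is why one subtracts $(\bar v)_{\partial B_R}$, which kills the $k=0$ mode.

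The key step is an ODE comparison and energy estimate for each mode. For $k\ge1$ we construct explicit barriers $\rho^{\lambda_k}$, or $(\varepsilon+\rho^2)^{\lambda_k/2}$ near the origin, which are sub- or supersolutions up to lower-order errors. We also note that the regular solution at $\rho=0$ is the bounded one, behaving like $\rho^k$ for $\rho\lesssim\sqrt\varepsilon$. Variation of parameters then represents $V_k$ as the homogeneous part, bounded by $\|V_k(R_0)\|R^{\lambda_k}/R_0^{\lambda_k}$, plus a Duhamel integral of the data. The data are controlled by $\|F\|_{\varepsilon,\sigma,s}$ and $\|G\|_{\varepsilon,\sigma-1,s}$, so they are of size $\rho^{\sigma}(\varepsilon+\rho^2)^{1-s}$ after accounting for the divergence. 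The Duhamel integral thus contributes $\int_\rho^{R_0}$ or $\int_0^\rho$ of powers $t^{\sigma-2s+1-\lambda_k}$-type. This yields $R^{\min\{\lambda_k,1+\sigma-2s\}}$, and when the two exponents coincide the integral produces $\log(R_0/R)$, which is case (ii).

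The cap at $1$ in $\tilde\alpha$ is handled by bounding all $k\ge1$ modes collectively by the $k=1$ rate when $\alpha>1$. For that we use the uniformity in $k$: since $\lambda_k\ge k$, the constants are summable in $k$. We then sum via Parseval over $\partial B_R$ to get \eqref{v_bar_L2_sphere_old} and \eqref{v_bar_L2_sphere}.

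For (iii), we keep the $k=0$ mode and subtract $\bar v(0)$. The radial ODE $\rho^{2-n}(\rho^{n-2}(\varepsilon+\rho^2)V_0')'-2\gamma^{-1}V_0=\text{data}$ has a regular solution $\phi$ with $\phi(\rho)-\phi(0)\sim(\varepsilon+\rho^2)^{\alpha_0/2}$ scale. To see this, rescale $\rho=\sqrt\varepsilon t$: the equation becomes $\varepsilon$-independent near the origin, and $\phi\sim\phi(0)$ there, with $\phi-\phi(0)\approx c\,\varepsilon t^2$ for $t\lesssim1$. Hence the natural quantity is $(\varepsilon+R^2)^{\tilde\alpha_0/2}$ instead of $R^{\tilde\alpha_0}$. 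We combine this with the $k\ge1$ estimate, using $\alpha\ge\alpha_0$ and $R\le(\varepsilon+R^2)^{1/2}$, and with $\dashnorm{\bar v-\bar v(0)}{L^2(\partial B_{R_0})}$ controlling both the $k=0$ and $k\ge1$ boundary values.

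I expect the main obstacle to be the uniform-in-$\varepsilon$ barrier and Duhamel analysis across the transition $\rho\sim\sqrt\varepsilon$. There the Euler approximation fails, and the degenerate weight $\varepsilon+\rho^2$ must be handled. The plan is to work with the two homogeneous solutions of each ODE and obtain their asymptotics by a maximum-principle comparison rather than by explicit formulas, as in \cite{DongYangZhu26}. The nonhomogeneous terms should be treated via the Wronskian $\rho^{n-2}(\varepsilon+\rho^2)W=\text{const}$, checking that the data weights are integrable at $0$ because $\sigma>0$.
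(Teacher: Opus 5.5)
Your plan takes a genuinely different route from the paper and is workable in outline. However, it takes on exactly the analysis the paper's argument is built to avoid.

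\textbf{How the paper argues.} The paper proves only (ii) and (iii) this way and quotes (i) from \cite{DongYangZhu26}. It never solves an inhomogeneous mode equation.
\begin{enumerate}
\item On each ball $B_R$ it writes $\bar v=v_1+v_2$, where $v_2\in H^1_0(B_R)$ carries $\dv F+G$. The rescaling $y'\mapsto Ry'$ replaces $\varepsilon$ by $R^{-2}\varepsilon$ and multiplies both data norms by $R^{1+\sigma-2s}$. So the uniform $L^\infty$ estimate of Lemma \ref{lemma_v2} gives $\|v_2\|_{L^\infty(B_R)}\le CMR^{1+\sigma-2s}$ for all modes at once, with no splitting of $F$ into radial and tangential parts.
\item Spherical harmonics are used only for the homogeneous part $v_1$. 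Lemma \ref{lemma_v1} gives oscillation decay $(\rho/R)^{\min\{\alpha,1\}}$ for every mode $k\ge1$ simultaneously; indeed $\rho^{\min\{\alpha,1\}}$ is a supersolution of each mode equation with $k\ge1$. Lemma \ref{lemma_v1_zero_mode} controls the zero mode with the barrier $V_0(0)+B(\varepsilon+r^2)^{\alpha_0/2}$ together with $V_0(\rho)\ge V_0(0)$.
\item The dyadic iteration $\omega(\rho)\le(\rho/R)^{\widehat\alpha}\omega(R)+CMR^{1+\sigma-2s}$ then does all the exponent bookkeeping. The geometric sum yields the smaller of the two rates. At resonance, each of the roughly $\log_2(R_0/R)$ steps contributes the same amount, and that is the logarithm in (ii).
\item In (iii), the zero-mode factor $\big(\frac{\varepsilon+\rho^2}{\varepsilon+R^2}\big)^{\widehat\alpha_0/2}$ stops decaying below $\sqrt\varepsilon$. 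The iteration therefore runs down to $\sqrt\varepsilon$ and is closed by one final step, which is where $(\varepsilon+R^2)^{\tilde\alpha_0/2}$ comes from.
\end{enumerate}

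\textbf{What your route buys and costs.} A Green's-function analysis could give sharper information, for instance no logarithm when $1+\sigma-2s=1<\alpha$, where the coincidence is only with the cap. Uniformity in $\varepsilon$ is automatic: $y'\mapsto\sqrt\varepsilon\,y'$ makes the whole equation $\varepsilon$-free, not just near the origin, and only the outer radius $R_0/\sqrt\varepsilon$ changes. So the transition analysis concerns one fixed ODE per mode. The real cost is kernel bounds that are uniform in $k$. The sum over modes should go through Minkowski's integral inequality and Parseval, using $\sup_k$ of the kernel bounds. The tangential part of $F$ costs a factor $\sqrt{k(k+n-3)}$, which is offset by the $(2\lambda_k+n-1)^{-1}$ size of the kernel. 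Summing constants in $k$ is not the right mechanism.

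\textbf{Statements that need correcting.}
\begin{itemize}
\item The claim $\lambda_k\ge k$ is false in general. From $\lambda_k(\lambda_k+n-1)=k(k+n-3)+2/\gamma$, one has $\lambda_k\ge k$ if and only if $k\le1/\gamma$. This fails for every large $k$, and already for $k=1$ when $\gamma>1$, i.e.\ in the blow-up regime $\alpha<1$. What is true, and sufficient, is $\lambda_k\ge\max\{\alpha,k-1\}$.
\item The homogeneous part of mode $k$ is not bounded by $|V_k(R_0)|(R/R_0)^{\lambda_k}$. The regular solution behaves like $\rho^k$ for $\rho\lesssim\sqrt\varepsilon$. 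So when $\lambda_k>k$ and $R\lesssim\sqrt\varepsilon$, the actual ratio exceeds that bound by the factor $(\sqrt\varepsilon/R)^{\lambda_k-k}$. The correct exponent is $\min\{k,\lambda_k\}\ge\min\{\alpha,1\}$. This is also why the cap at $1$ in $\tilde\alpha$ is intrinsic (mode $1$ is linear below scale $\sqrt\varepsilon$), not an artifact of bounding the modes collectively.
\item For the zero mode, the rescaled regular solution satisfies $\Phi(t)-\Phi(0)\approx\Phi(0)\,t^2/(\gamma(n-1))$ near $t=0$, not $c\,\varepsilon t^2$. In any case, what you need is the ratio bound $|\phi(\rho)-\phi(0)|\le\big(\frac{\varepsilon+\rho^2}{\varepsilon+R^2}\big)^{\alpha_0/2}|\phi(R)-\phi(0)|$, which the paper's barrier gives directly with constant $1$.
\end{itemize}
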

	
	For the proof, we first recall the following lemmas for the homogeneous and inhomogeneous equations.

	\begin{lemma}[\cite{DongYangZhu26}*{Lemma 6.2}]
		\label{lemma_v1}
		For $n \ge 2$, $\gamma > 0$, and $\varepsilon > 0$, let $v_1 \in H^1(B_{R_0})$ satisfy
		\begin{equation}
			\label{equation_v1}
			\dv((\varepsilon+|y'|^2)\nabla v_1) -2\gamma^{-1}v_1=0\quad\text{in}\,\,B_{R_0} \subset \bR^{n-1}.
		\end{equation}
		Then for any $0 < \rho < R \le R_0$, we have
		$$
		\left( \fint_{\partial B_\rho} |v_1(y') - (v_1)_{\partial B_\rho}|^2 \, dS\right)^{\frac{1}{2}} \le \left(\frac{\rho}{R} \right)^{\widehat\alpha} \left( \fint_{\partial B_R} |v_1(y') - (v_1)_{\partial B_R}|^2 \, dS \right)^{\frac{1}{2}},
		$$
		where $\widehat{\alpha}:= \min\{\alpha,1 \}$, and $\alpha$ is given in \eqref{alpha_new}.
	\end{lemma}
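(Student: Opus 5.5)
The plan is to decompose $v_1$ into spherical harmonics on $\partial B_\rho\subset\bR^{n-1}$ and use the radial ODEs satisfied by the modes. Write $v_1(\rho,\theta)=\sum_{k\ge0}\sum_{m} a_{k,m}(\rho)Y_{k,m}(\theta)$, where $Y_{k,m}$ are orthonormal spherical harmonics on $\bS^{n-2}$ of degree $k$, with eigenvalue $k(k+n-3)$. Then
$$
\fint_{\partial B_\rho}|v_1-(v_1)_{\partial B_\rho}|^2\,dS=c_n\sum_{k\ge1}\sum_m |a_{k,m}(\rho)|^2 .
$$
So it suffices to show that for every $k\ge1$ and every $m$, $|a_{k,m}(\rho)|\le(\rho/R)^{\widehat\alpha}|a_{k,m}(R)|$ for $0<\rho<R\le R_0$. (For $n=2$ the sphere $\bS^0$ has two points, the modes are the even and odd parts, and the same scheme applies.)

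Each $a=a_{k,m}$ solves
$$
\rho^{2-n}\big(\rho^{n-2}(\varepsilon+\rho^2)a'\big)'-\frac{k(k+n-3)(\varepsilon+\rho^2)}{\rho^2}a-2\gamma^{-1}a=0 .
$$
By interior regularity, $v_1$ is smooth, so $a$ is regular at $0$ with $a(\rho)=O(\rho^k)$. The key step is a comparison or monotonicity argument showing that $a(\rho)/\rho^{\widehat\alpha}$ has constant sign and $|a(\rho)|/\rho^{\widehat\alpha}$ is nondecreasing in $\rho$. To get this, rewrite the equation as $(\rho^{n-2}(\varepsilon+\rho^2)a')'=\rho^{n-2}q(\rho)a$ with $q>0$. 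A maximum-principle argument for the regular solution shows that $a$ has no interior zero and that $aa'\ge0$, so we may assume $a>0$ and $a'\ge0$. Then I would show that $\rho^{\widehat\alpha}$ is a subsolution for every $k\ge1$, that is,
$$
L[\rho^{\widehat\alpha}] := \rho^{2-n}\big(\rho^{n-2}(\varepsilon+\rho^2)(\rho^{\widehat\alpha})'\big)'-q\,\rho^{\widehat\alpha}\ge0 .
$$
A direct computation gives
$$
L[\rho^{\beta}]=\rho^{\beta-2}\Big[\varepsilon\big(\beta(\beta+n-3)-k(k+n-3)\big)+\rho^2\big(\beta(\beta+n-1)-k(k+n-3)-2\gamma^{-1}\big)\Big].
$$
The first bracket is $\le0$ for $\beta\le1\le k$. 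The second bracket vanishes for $k=1$ and $\beta=\alpha$, since $\alpha$ is the positive root of $\beta^2+(n-1)\beta-(n-2+2/\gamma)=0$. So the sign goes the wrong way, and $\rho^{\widehat\alpha}$ is actually a \emph{supersolution} when $\varepsilon>0$. This is the right direction for the claimed decay.

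The comparison then runs as follows. Set $w=a(R)(\rho/R)^{\widehat\alpha}$. For $\beta=\widehat\alpha\le\alpha$ and $k\ge1$, both brackets are $\le0$, so $L[w]\le0$. Also $L[a]=0$, $a(R)=w(R)$, and $a(0)=0\le w(0)$ for $k\ge1$. The maximum principle for $L$ on $(0,R)$ applies because the zeroth-order coefficient $-q<0$, and singular endpoint behaviour at $0$ is harmless since $w-a\to0$. It yields $a\le w$ when $a(R)>0$; the case $a(R)<0$ is symmetric. This gives $|a(\rho)|\le(\rho/R)^{\widehat\alpha}|a(R)|$. Summing over $k\ge1$ and $m$ then proves the lemma.

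I expect the main obstacle to be verifying the maximum principle near $\rho=0$, where the ODE degenerates mildly: the coefficient of $a''$ is $\varepsilon+\rho^2\ge\varepsilon>0$, but the term $k(k+n-3)\varepsilon/\rho^2$ is singular. I would handle it by working on $(\delta,R)$, using $\rho^{\widehat\alpha}\ge\rho^{k}\gtrsim|a|$ near $0$ to control the boundary value at $\delta$ (times a constant, which goes to zero relative to the comparison after adding a small multiple of $w$), and then letting $\delta\to0$. Alternatively, I would do the comparison directly in $B_R\subset\bR^{n-1}$ with the function $a_{k,m}(|y'|)Y_{k,m}$ against $w\,Y_{k,m}$ on the cone where $Y_{k,m}>0$. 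The ODE route is cleaner, so I would use it.
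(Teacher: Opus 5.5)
Your argument is correct and follows essentially the route behind the cited lemma. You expand in spherical harmonics, as the paper indicates when it invokes the proof of \cite{DongYangZhu26}*{Lemma 6.2}, and compare each nonconstant mode with the supersolution $a(R)(\rho/R)^{\widehat\alpha}$; this is the same barrier-plus-maximum-principle technique the paper uses for the zero mode in Lemma \ref{lemma_v1_zero_mode}. Your worry about $\rho=0$ is unnecessary, and so are your initial ``subsolution'' and monotonicity detours: with $a(R)\ge 0$, the functions $w\mp a$ are continuous on $[0,R]$, vanish at $0$, are nonnegative at $R$, satisfy $L(w\mp a)\le 0$ for your operator $L$ with negative zeroth-order coefficient, and so cannot have a negative interior minimum, which gives the two-sided bound $|a|\le w$ directly.
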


	\begin{lemma}[\cite{DongYangZhu26}*{Lemma 6.3}]
		\label{lemma_v2}
		For $n \ge 2$, $s \le 1/2$, $\sigma > 0$, and $\varepsilon > 0$, suppose that $F,G$ satisfy \eqref{FG_bound3} {with $B_1$ in place of $B_{R_0}$}, and $v_2 \in H^1_0(B_{1})$ satisfies
		\begin{equation*}
			\dv((\varepsilon+|y'|^2)\nabla v_2) - 2\gamma^{-1} v_2=\dv F + G\quad\text{in}\,\,B_{1}\subset \bR^{n-1}.
		\end{equation*}
		Then we have
		\begin{equation*}
			\|v_2\|_{L^\infty(B_{1})}\le C(\|F\|_{\varepsilon,\sigma,s, B_{1}} + \|G\|_{\varepsilon,\sigma-1,s,B_{1}}),
		\end{equation*}
		where $C>0$ depends only on $n$, $\sigma$, and $s$, and is in particular independent of $\varepsilon$.
	\end{lemma}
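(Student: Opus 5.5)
The plan is to prove the bound by a scale-invariant rescaling combined with a ``sup-contraction'' argument that exploits the positive zeroth-order term $2\gamma^{-1}$. Fix $\varepsilon>0$. Since $\varepsilon+|y'|^2\ge\varepsilon$, the equation is uniformly elliptic for this fixed $\varepsilon$. The data satisfy $|F|\le C\|F\|$ and $|G|\le C\|G\|$ in $L^\infty(B_1)$, because $s\le1/2$ and $\sigma>0$. Standard De Giorgi--Stampacchia theory therefore gives qualitatively $M:=\sup_{B_1}|v_2|<\infty$. All the work is in bounding $M$ independently of $\varepsilon$. Write $N:=\|F\|_{\varepsilon,\sigma,s,B_1}+\|G\|_{\varepsilon,\sigma-1,s,B_1}$.

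\textbf{Step 1 (rescaling).} Take any $y_0\in B_1$ and set $\rho:=\frac14(\sqrt\varepsilon+|y_0'|)$. Put $w(z):=v_2(y_0+\rho z)$ on $B_2$, or on $B_2\cap\{y_0+\rho z\in B_1\}$ near $\partial B_1$, where $w=0$ on the boundary portion. On this ball $\varepsilon+|y'|^2\sim\rho^2$, so $\hat a(z):=\rho^{-2}(\varepsilon+|y_0+\rho z|^2)$ satisfies $C^{-1}\le\hat a\le C$. The rescaled equation is
\[
\dv_z(\hat a\nabla_z w)-2\gamma^{-1}w=\dv_z\hat F+\hat G,\qquad \hat F:=\rho^{-1}F(y_0+\rho\,\cdot),\ \hat G:=G(y_0+\rho\,\cdot).
\]
From the definition \eqref{def_norm}, $|\hat F|\le CN\rho^{-1}\rho^{\sigma}\rho^{2-2s}=CN\rho^{1+\sigma-2s}\le CN$ and $|\hat G|\le CN\rho^{\sigma+1-2s}\le CN$, using $1+\sigma-2s>0$ and $\rho\lesssim1$. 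In the inner region $|y_0'|\lesssim\sqrt\varepsilon$ we have $|y'|^\sigma\le C\rho^{\sigma}$ as well, so the same bounds hold. Hence every rescaled problem is uniformly elliptic with ellipticity constants independent of $\varepsilon$ and $y_0$, has zeroth-order coefficient $2\gamma^{-1}>0$, and has data of size $\le CN$.

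\textbf{Step 2 (splitting and contraction).} On the rescaled domain $D$, split $w=w_1+w_2$. Here $w_2\in H^1_0(D)$ solves the inhomogeneous equation, and Stampacchia's $L^\infty$ estimate gives $\|w_2\|_{L^\infty}\le CN$. The remaining part $w_1$ solves the homogeneous equation with $|w_1|\le M+CN$ on $\partial D$, and $w_1=0$ on the portion coming from $\partial B_1$. Compare $w_1$ with $(M+CN)\phi$, where $\phi$ solves $\dv(\hat a\nabla\phi)-2\gamma^{-1}\phi=0$ in $B_2$ with $\phi=1$ on $\partial B_2$. The strong maximum principle, quantified via Harnack or a De Giorgi oscillation estimate together with the strictly positive zeroth-order term, gives $\sup_{B_{1/2}}\phi\le\theta<1$. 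Here $\theta$ depends only on $n$, $\gamma$, and the ellipticity, since $2\gamma^{-1}$ is a fixed positive constant after rescaling. The comparison is valid on the truncated domain as well, because $w_1=0\le(M+CN)\phi$ on the portion of $\partial D$ coming from $\partial B_1$. It follows that $|v_2(y_0)|\le\theta(M+CN)+CN$.

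\textbf{Step 3 (conclusion).} Taking the supremum over $y_0$ gives $M\le\theta M+CN$, hence $M\le CN/(1-\theta)$, which is the claim.

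The main obstacle is making the contraction factor $\theta$ uniform. It must not depend on $\varepsilon$, on $y_0$, or on the position of the ball relative to $\partial B_1$, and the boundary case needs the zero Dirichlet data to enter only favorably. I would handle this with a single barrier argument on $B_2$ and the comparison principle on the truncated domain. This works provided the constant in the Stampacchia estimate is taken with respect to the rescaled coefficient class, which is exactly what Step 1 guarantees.
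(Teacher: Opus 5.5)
\textbf{The gap is the dependence on $\gamma$.} The lemma asserts a constant depending only on $n$, $\sigma$, $s$. In your argument the only source of smallness is the zeroth-order term, and that cannot give such a constant. Since $1-\phi$ solves $\dv(\hat a\nabla(1-\phi))=-2\gamma^{-1}\phi$ in $B_2$ with zero boundary values and $0\le\phi\le1$, comparison gives $1-\phi\le C\gamma^{-1}$. Hence any admissible $\theta$ satisfies $1-\theta\le C\gamma^{-1}$, and your final constant $C/(1-\theta)$ grows at least linearly in $\gamma$. Without the zeroth-order term ($\gamma=\infty$) one has $\phi\equiv1$, and $M\le\theta M+CN$ says nothing. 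So what you actually prove is $\|v_2\|_{L^\infty(B_1)}\le C(n,\sigma,s,\gamma)N$.

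This is not harmless here. The $\gamma$-uniformity of the lemma is what lets the constant in \eqref{grad_u_upperbound2} be independent of the upper bound of $\gamma$, consistent with the insulated problem as $\gamma\to\infty$ (see the remark after Theorem \ref{main_thm}).

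The missing idea is that uniformity must come from the decay of the weights, not from the zeroth-order term. That decay is the factor $\rho^{1+\sigma-2s}$, with $1+\sigma-2s\ge\sigma>0$, which you computed in Step 1 and then discarded; it should be summed over dyadic scales. The term $-2\gamma^{-1}v_2$ has a favorable sign and can simply be dropped throughout. One way to carry this out is to localize the data rather than the solution:
\begin{itemize}
\item[(a)] Split $F$ and $G$ over the annuli $A_j=\{2^{-j-1}\le|y'|<2^{-j}\}$ with $2^{-j}\gtrsim\sqrt\varepsilon$, plus a core ball of radius $\sim\sqrt\varepsilon$. Let $v^{(j)}\in H^1_0(B_1)$ solve the equation with the $j$-th piece.
\item[(b)] Test the equation with $v^{(j)}$ and use $\|v\|_{L^2(B_1)}\le\frac{2}{n-1}\||y'|\nabla v\|_{L^2(B_1)}$ for $v\in H^1_0(B_1)$. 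This gives $\|v^{(j)}\|_{L^2(B_1)}\le CN2^{-j(1+\sigma-2s+\frac{n-1}{2})}$.
\item[(c)] Near $A_j$ one has $\varepsilon+|y'|^2\sim4^{-j}$, so rescaled local boundedness gives $|v^{(j)}|\le CN2^{-j(1+\sigma-2s)}$ there.
\item[(d)] On the complement, $v^{(j)}$ solves the homogeneous equation and vanishes on $\partial B_1$, so the maximum principle extends the bound in (c) to all of $B_1$.
\end{itemize}
Summing the geometric series gives the lemma with $C=C(n,\sigma,s)$. For the core piece, treat $G$ with the $L^p$ modification described below.

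Two smaller points also need repair. First, when $\sigma<1$ the factor $|y'|^{\sigma-1}$ is unbounded at the origin. So neither $|G|\le C\|G\|_{\varepsilon,\sigma-1,s,B_1}$ on $B_1$, nor $|\hat G|\le CN\rho^{1+\sigma-2s}$ on balls containing the origin (i.e.\ $|y_0'|<\sqrt\varepsilon$), holds pointwise. Use instead the $L^p$ form of the Stampacchia estimate with $p>\max\{1,\frac{n-1}{2}\}$ and $(1-\sigma)p<n-1$; for such $p$, $\|\hat G\|_{L^p}\le CN\rho^{1+\sigma-2s}$ does hold. Second, $\rho\lesssim1$ presumes $\varepsilon\lesssim1$. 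In the proof of Proposition \ref{prop_grad_v_bar_control}, however, the lemma is applied with $R^{-2}\varepsilon$ in place of $\varepsilon$ for arbitrarily small $R$. For $\varepsilon\gtrsim1$ you must use $|y'|\le1$ and the small size of the rescaled domain instead. This yields a bound of order $N\varepsilon^{-s}$, which is harmless in the range $0\le s\le1/2$ actually used.
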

	We also need the following lemma to estimate the average of $v_1$.
	\begin{lemma}
		\label{lemma_v1_zero_mode}
		For $n \ge 2$, $\gamma > 0$, and $\varepsilon > 0$, let $v_1 \in H^1(B_{R_0})$ satisfy \eqref{equation_v1}.
		Then for any $0 < \rho < R < R_0$, we have
		\begin{equation}\label{v1_control_zero}
			|(v_1)_{\partial B_\rho}-v_1(0)| \le \left(\frac{\varepsilon+\rho^2}{\varepsilon+R^2} \right)^{\alpha_0/2} |(v_1)_{\partial B_R}-v_1(0)|,
		\end{equation}
		where $\alpha_0$ is given in \eqref{alpha_new_zero}.
	\end{lemma}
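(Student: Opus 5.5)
We need to prove Lemma \ref{lemma_v1_zero_mode}: the zero spherical mode of $v_1$, measured as the deviation of $(v_1)_{\partial B_\rho}$ from $v_1(0)$, decays like $(\varepsilon+\rho^2)^{\alpha_0/2}$. The plan is to reduce to a radial ODE and compare with an explicit radial solution whose behavior at infinity is governed by $\alpha_0$.

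\textbf{Reduction to a radial ODE.} Set $m(\rho):=(v_1)_{\partial B_\rho}$. Since the operator in \eqref{equation_v1} is rotationally invariant, averaging the equation over spheres shows that $m$ satisfies the radial equation
\[
\rho^{-(n-2)}\big(\rho^{n-2}(\varepsilon+\rho^2)m'\big)' = 2\gamma^{-1} m,\qquad 0<\rho<R_0,
\]
with $m(0)=v_1(0)$ and $m'(0)=0$. By interior regularity, $v_1$ is smooth, since the equation is uniformly elliptic for fixed $\varepsilon>0$. For $n=2$ the ``sphere'' $\partial B_\rho$ is the two-point set $\{\pm\rho\}$, and the same identity holds for the even part of $v_1$. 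This ODE has a regular singular point at $0$, so its solution space regular at $0$ is one-dimensional. Hence $m=v_1(0)\,\phi$, where $\phi$ is the unique regular solution with $\phi(0)=1$.

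\textbf{Properties of $\phi$ and the reduced claim.} The maximum principle for the ODE gives $\phi\ge1$ and $\phi'\ge0$: integrating, $\rho^{n-2}(\varepsilon+\rho^2)\phi'=\int_0^\rho 2\gamma^{-1}t^{n-2}\phi\,dt>0$. Therefore $(v_1)_{\partial B_\rho}-v_1(0)=v_1(0)(\phi(\rho)-1)$, and \eqref{v1_control_zero} is equivalent to the monotonicity statement that
\[
\rho\mapsto \frac{\phi(\rho)-1}{(\varepsilon+\rho^2)^{\alpha_0/2}}\quad\text{is nondecreasing}.
\]
By the rescaling $\rho=\sqrt{\varepsilon}\,t$, the function $\phi$ becomes the $\varepsilon$-independent solution of $t^{-(n-2)}(t^{n-2}(1+t^2)\phi_t)_t=2\gamma^{-1}\phi$, and the ratio becomes $(\phi-1)/(1+t^2)^{\alpha_0/2}$ up to a constant factor. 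So it suffices to prove this $\varepsilon$-free monotonicity.

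\textbf{The main step: monotonicity of the ratio.} Set $w:=\phi-1\ge0$ and $z:=(1+t^2)^{\alpha_0/2}$. Then $w$ satisfies $Lw:=t^{-(n-2)}(t^{n-2}(1+t^2)w')'-2\gamma^{-1}w=2\gamma^{-1}\ge0$, so $w$ is a supersolution-type object, while $z$ is a subsolution. Indeed, a direct computation gives
\[
t^{-(n-2)}\big(t^{n-2}(1+t^2)z'\big)' = \alpha_0\,(1+t^2)^{\alpha_0/2-1}\big((n-1)+(n-1+\alpha_0)t^2+\alpha_0 t^2\big)\cdots,
\]
which simplifies to $\alpha_0\big((n-1)+\alpha_0 t^2\big)(1+t^2)^{\alpha_0/2-1}+\cdots$. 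Using the relation $\alpha_0(\alpha_0+n-1)=2/\gamma$, this gives $Lz\le 0$ or $Lz\ge0$ in a controlled form.

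I would then examine the Wronskian-type quantity $W:=t^{n-2}(1+t^2)(w'z-wz')$. We have $W(0)=0$, since $w(0)=0$ and $w'(0)=z'(0)=0$, and
\[
W'=t^{n-2}\big(z\,Lw - w\,Lz\big)=t^{n-2}\big(2\gamma^{-1}z - w\,Lz\big).
\]
The hard part is the sign of $w\,Lz$. I expect $Lz$ to have the favorable sign, with $Lz\le 0$, because the choice of $\alpha_0$ matches the leading term $\alpha_0(\alpha_0+n-1)t^{\alpha_0}$ at infinity with $2\gamma^{-1}t^{\alpha_0}$; the lower-order terms should then carry a definite sign (the term in $(n-1)(1+t^2)^{\alpha_0/2-1}$ versus $(1+t^2)^{\alpha_0/2}$). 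If instead $Lz$ has the wrong sign, I would fall back on comparing $w$ with a combination $z-1$, or use $\phi$ itself in the Wronskian, namely $t^{n-2}(1+t^2)(\phi' z-\phi z')$, exploiting $\phi\ge1$. Once $W\ge0$ is established, we have $(w/z)'=W/(t^{n-2}(1+t^2)z^2)\ge0$, which is the monotonicity and hence \eqref{v1_control_zero}.
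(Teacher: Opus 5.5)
Your approach is sound and differs from the paper's, but the decisive step is not actually carried out. Your displayed formula for $t^{-(n-2)}(t^{n-2}(1+t^2)z')'$ contains a spurious extra $\alpha_0t^2$, and you end by hedging on the sign of $Lz$. The computation does close, and no fallback is needed:
\[
t^{-(n-2)}\big(t^{n-2}(1+t^2)z'\big)'=\alpha_0(1+t^2)^{\alpha_0/2-1}\big((n-1)+(n-1+\alpha_0)t^2\big).
\]
Using $\alpha_0^2+(n-1)\alpha_0=2/\gamma$, this gives $Lz=-\alpha_0^2(1+t^2)^{\alpha_0/2-1}<0$. Then
\[
W'=t^{n-2}\big(2\gamma^{-1}z+\alpha_0^2(1+t^2)^{\alpha_0/2-1}w\big)>0,
\]
so $W>0$ on $(0,\infty)$ and $w/z$ is strictly increasing, which is \eqref{v1_control_zero}. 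Your labels are also reversed: $Lw=2\gamma^{-1}>0$ makes $w$ a subsolution, and $Lz<0$ makes $z$ a supersolution. The signs you actually use, $w\ge 0$ and $Lz\le 0$, are the correct ones.

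The paper uses the same supersolution; its identity $L_0(\varepsilon+r^2)^{\alpha_0/2}=-\alpha_0^2\varepsilon(\varepsilon+r^2)^{\alpha_0/2-2}$ is yours after rescaling and multiplying by $\varepsilon+r^2$. It argues by comparison rather than by a Wronskian. Assuming $V_0(0)\ge0$, it gets $V_0(\rho)\ge V_0(0)$ from the weak maximum principle applied to the radial solution $V_0(|y|)$ in $B_\rho$. It gets the upper bound by comparing $V_0$ on $(0,R)$ with the barrier $V_0(0)+B(\varepsilon+r^2)^{\alpha_0/2}$, where $B$ is chosen so the two agree at $r=R$.

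Your route replaces both maximum-principle steps with explicit integration of the divergence-form ODE. The sign of $\phi-1$ comes from $\rho^{n-2}(\varepsilon+\rho^2)\phi'=\int_0^\rho 2\gamma^{-1}t^{n-2}\phi\,dt$. The Wronskian identity then gives the stronger monotonicity of $(\phi-1)/(\varepsilon+\rho^2)^{\alpha_0/2}$ in one stroke, needing only $W(0)=0$ rather than a boundary comparison near $r=0$.

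Conversely, the paper's comparison argument does not need the one-dimensionality of the regular solution space. You could also avoid it, by running your argument on $m-m(0)$ multiplied by the sign of $m(0)$. The comparison argument is also the more robust tool if the radial relation were only an inequality rather than an exact ODE.
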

	
	\begin{proof}
		Let $V_0(r):=(v_1)_{\partial B_r}$. By continuity of $v_1$, we have $V_0(0)=v_1(0)$. Without loss of generality, assume $V_0(0)\ge 0$. {Otherwise, consider $-v_1$ in place of $v_1$.}
		Using the spherical harmonic decomposition as in the proof of \cite{DongYangZhu26}*{Lemma 6.2}, one can see that $V_0$ satisfies the following equation
		\begin{equation}\label{zero-mode}
			\begin{aligned}
				L_0 V_{0}:= V_{0}''(r) +\left(\frac {n-2}r +\frac{2r}{\varepsilon+r^2} \right)V_{0}'(r)- \frac{2}{\gamma(\varepsilon + r^2)}  V_{0}(r)=0 \quad \mbox{in}~~(0,R_0).
			\end{aligned}
		\end{equation} 
		Then it follows that $v_0(y):=V_0(|y|)$ is a radial solution of \eqref{equation_v1}. 
		Let $\rho\in (0,R_0)$.
		By the weak maximum principle for $v_0$ in $B_\rho$, we know that $$0\le v_0(0)\le \underset{\partial{B_\rho}}{\max} \,v_0.$$
		This directly implies that
		\begin{equation}\label{V0-lower}
			V_0(\rho) - V_0(0)\ge0, \quad \rho\in(0,R_0).
		\end{equation}
		Next, we prove the upper bound for $V_0(\rho)-V_0(0)$. First, we show that $w(r):=(\varepsilon+r^2)^{\alpha_0/2}$ is a supersolution. By direct computations,
		\begin{equation*}
			w'(r) =  \alpha_0 r (\varepsilon + r^2)^{\alpha_0/2-1},
		\end{equation*}
		\begin{equation*}
			w''(r) = \alpha_0 (\varepsilon + r^2)^{\alpha_0/2-1} +  \alpha_0 (\alpha_0 - 2) r^2 (\varepsilon + r^2)^{\alpha_0/2-2}.
		\end{equation*}
		Thus
		\begin{align*}
			L_0 w&=\alpha_0^2r^2(\varepsilon + r^2)^{\alpha_0/2-2}+\big((n-1)\alpha_0-2\gamma^{-1}\big)(\varepsilon + r^2)^{\alpha_0/2-1}
			\\&=-\alpha_0^2\varepsilon(\varepsilon + r^2)^{\alpha_0/2-2}\le 0,
		\end{align*}
		where in the last line, we used the fact that 
		$$\alpha_0^2+(n-1)\alpha_0-2\gamma^{-1}=0$$
		in view of the definition of $\alpha_0$ in \eqref{alpha_new_zero}.
		
		Next we construct a barrier
		\begin{equation*}
			W(r) := V_0(0) + B\,w(r)=V_0(0) + B(\varepsilon + r^2)^{\alpha_0/2},
		\end{equation*}
		where the constant $B$ is defined as
		$$B:=\frac{V_0(R) - V_0(0)}{(\varepsilon + R^2)^{\alpha_0/2}}\ge 0.$$
		Then
		\begin{equation*}
			{L_0}W = - \frac{2}{\gamma(\varepsilon + r^2)}  V_{0}(0)+B\,L_0w\le 0=L_0 V_0 \quad \mbox{in}~~(0,R_0) .  
		\end{equation*}
		Note that $W(R)=V_0(R)$, { and $W(\delta)\ge V_0(\delta)$ for small $\delta>0$ by the continuity of $V_0$.} Then by the comparison principle, $V_0\le W$ in $(0,R)$. This directly implies
		\begin{equation}\label{V0-upper}
			V_0(\rho) - V_0(0) \leq B(\varepsilon + \rho^2)^{\alpha_0/2}=\Big(\frac{\varepsilon + \rho^2}{\varepsilon + R^2}\Big)^{\alpha_0/2} (V_0(R) - V_0(0)) \quad \forall \rho \in (0, R).
		\end{equation}
		The lemma is proved by combining \eqref{V0-lower} and \eqref{V0-upper}.
	\end{proof}

	\begin{proof}[Proof of Proposition \ref{prop_grad_v_bar_control}] The first case where $1+\sigma - 2s \neq \min\{\alpha,1\}$ has been proved in \cite{DongYangZhu26}. Therefore, we only give the proof for the cases (ii) and (iii).

		We denote
		$$
		{M:}=\|F\|_{\varepsilon, \sigma,s,B_{R_0}} + \|G\|_{\varepsilon, \sigma-1,s,B_{R_0}}.
		$$
		For $0 < \rho \le R/2 \le R_0/2$, we write $\bar v=v_1+v_2$ in $B_R$, where $v_2$ satisfies
		$$
		\dv((\varepsilon+|y'|^2)\nabla v_2) - 2\gamma^{-1}v_2=\dv F + G\quad\text{in}\,\,B_R
		$$
		and $v_2=0$ on $\partial B_R$. Thus $v_1$ satisfies
		$$
		\dv((\varepsilon+|y'|^2)\nabla v_1)- 2\gamma^{-1}v_1=0\quad\text{in}\,\,B_R,
		$$
		and $v_1=\bar v$ on $\partial B_R$.
		Since $\tilde v_2(y'):=v_2(Ry')$ satisfies
		$$
		\dv((R^{-2}\varepsilon+|y'|^2)\nabla \tilde v_2) - 2\gamma^{-1}\tilde v_2
		=\dv \tilde F + \tilde G\quad\text{in}\,\,B_1,
		$$
		where $\tilde F(y'):=R^{-1}F(Ry')$ and $\tilde{G}(y') := G(Ry')$ satisfy
		\begin{align*}
			\|\tilde{F}\|_{R^{-2}\varepsilon,\sigma,s,B_1} &= R^{1+ \sigma - 2s} \|F\|_{\varepsilon, \sigma,s,B_{R}}, \\
			\|\tilde G\|_{R^{-2}\varepsilon, \sigma-1,s,B_{1}} &= R^{1+ \sigma - 2s} \|G\|_{\varepsilon, \sigma-1,s,B_{R}},
		\end{align*}
		we can apply Lemma \ref{lemma_v2} to $\tilde{v}_2$ with $R^{-2}\varepsilon$ in place of $\varepsilon$ to obtain
		\begin{equation}
			\label{v2_control}
			\|v_2\|_{L^\infty(B_R)}\le C{M}R^{1+\sigma-2s}.
		\end{equation}
		By Lemma \ref{lemma_v1},
		\begin{equation}
			\label{v1_control}
			\left( \fint_{\partial B_\rho} |v_1(y') - (v_1)_{\partial B_\rho}|^2 \, dS \right)^{\frac{1}{2}} \le \left(\frac{\rho}{R} \right)^{\widehat\alpha} \left( \fint_{\partial B_R} |v_1(y') - (v_1)_{\partial B_R}|^2 \, dS \right)^{\frac{1}{2}},
		\end{equation}
		where $\widehat\alpha=\min\{\alpha,1\}$.
		By \eqref{v1_control} and \eqref{v1_control_zero}, using orthogonality and the fact that $\alpha_0\le \alpha$, we have
		\begin{equation}
			\label{v1_control_new}
			\left( \fint_{\partial B_\rho} |v_1(y') - v_1(0)|^2 \, dS \right)^{\frac{1}{2}} \le \left(\frac{\varepsilon+\rho^2}{\varepsilon+R^2} \right)^{\widehat\alpha_0/2} \left( \fint_{\partial B_R} |v_1(y') - v_1(0)|^2 \, dS \right)^{\frac{1}{2}},
		\end{equation}
		where $\widehat\alpha_0=\min\{\alpha_0,1\}$.
		
		{ We denote
			$$
			\omega(\rho):= \dashnorm{\bar{v} - (\bar{v})_{\partial B_{\rho}}}{L^2(\partial B_\rho)}
			$$
			and 
			$$
			\omega_0(\rho):= \dashnorm{\bar{v} - \bar{v}(0)}{L^2(\partial B_\rho)}.
			$$}
		Combining \eqref{v1_control} and \eqref{v2_control} yields, using $\bar{v} = v_1$ on $\partial B_R$ and $\bar{v} = v_1 + v_2$,
		\begin{equation}
			\begin{aligned}
				\label{omega_iteration}
				\omega(\rho)&\le \left( \fint_{\partial B_\rho} |v_1(y') - (v_1)_{\partial B_\rho}|^2 \, dS \right)^{\frac{1}{2}} + \left( \fint_{\partial B_\rho} |v_2(y') - (v_2)_{\partial B_\rho}|^2 \, dS \right)^{\frac{1}{2}}\\
				&\le \left(\frac{\rho}{R} \right)^{\widehat\alpha} \left( \fint_{\partial B_R} |v_1(y') -  (v_1)_{\partial B_R}|^2 \, dS \right)^{\frac{1}{2}} + 2\|v_2\|_{L^\infty(B_R)}\\
				&\le \left(\frac{\rho}{R} \right)^{\widehat\alpha} \omega(R) + C{M}R^{1+ \sigma - 2s}.
			\end{aligned}
		\end{equation}
		{Similarly, combining} \eqref{v1_control_new} and \eqref{v2_control} yields
		\begin{equation}
			\begin{aligned}
				\label{omega_iteration_zero}
				\omega_0(\rho)&\le \left( \fint_{\partial B_\rho} |v_1(y') - v_1(0)|^2 \, dS \right)^{\frac{1}{2}} + \left( \fint_{\partial B_\rho} |v_2(y') - v_2(0)|^2 \, dS \right)^{\frac{1}{2}}\\
				&\le \left(\frac{\varepsilon+\rho^2}{\varepsilon+R^2} \right)^{\widehat\alpha_0/2} \left( \fint_{\partial B_R} |v_1(y') -  v_1(0)|^2 \, dS \right)^{\frac{1}{2}} + 2\|v_2\|_{L^\infty(B_R)}\\
				&\le \left(\frac{\varepsilon+\rho^2}{\varepsilon+R^2} \right)^{\widehat\alpha_0/2} \omega_0(R) + C{M}R^{1+ \sigma - 2s}.
			\end{aligned}
		\end{equation}
		
		We first treat case (ii) where $1 + \sigma - 2s = \widehat\alpha$.
		For a positive integer $k$, we take $\rho = 2^{-i-1}R_0$ and $R = 2^{-i}R_0$ in \eqref{omega_iteration}, and iterate from $i = 0$ to $k-1$. 
		\begin{align*}
			\omega(2^{-k}R_0) &\le 2^{-k\widehat\alpha} \omega(R_0) + C{M}\sum_{i=1}^k 2^{-(k-i)\widehat\alpha} (2^{1-i})^{1+\sigma - 2s}\\
			&= 2^{-k\widehat\alpha} \omega(R_0) + C{M}\sum_{i=1}^k 2^{-(k-i)\widehat\alpha} (2^{1-i})^{\widehat\alpha}\\
			&\le 2^{-k\widehat\alpha} \omega(R_0) + C{M}k2^{-k\widehat\alpha}.
		\end{align*}
		For any $\rho \in (0, R_0/2)$, let $k$ be the integer such that $2^{-k-1}R_0 < \rho \le 2^{-k}R_0$. Then by \eqref{omega_iteration} and the previous inequality,
		\begin{equation*}
			\begin{aligned}
\omega(\rho) &\le \omega(2^{-k}R_0)+C{M}2^{-k\widehat\alpha} \\ &\le 2^{-k\widehat\alpha} \omega(R_0) + C{M}k2^{-k\widehat\alpha}\\ &\le C{(\omega(R_0)+M)}\rho^{\widehat\alpha}\log(R_0/\rho).
\end{aligned}
		\end{equation*}
		Therefore, \eqref{v_bar_L2_sphere} is proved.

	Now we consider case (iii), where $1+\sigma - 2s \neq \widehat\alpha_0$.
		
		For a positive integer $k$ such that $2^{-k}R_0\ge \sqrt{\varepsilon}$, we take $\rho = 2^{-i-1}R_0$ and $R = 2^{-i}R_0$ in \eqref{omega_iteration_zero}, and iterate from $i = 0$ to $k-1$ to get
		\begin{align*}
			\omega_0(2^{-k}R_0) &\le  \left(\frac{\varepsilon+2^{-2k}R_0^2}{\varepsilon+R_0^2} \right)^{\widehat\alpha_0/2}\omega_0(R_0)\\ &\quad + C{M}\sum_{i=1}^k 2^{-(i-1)(1+\sigma-2s)} \left(\frac{\varepsilon+2^{-2k}R_0^2}{\varepsilon+2^{-2i}R_0^2} \right)^{\widehat\alpha_0/2}\\
			&\le C2^{-k\widehat\alpha_0} \omega_0(R_0) + C{M}\sum_{i=1}^k 2^{-(i-1)(1+\sigma-2s)} 2^{-(k-i)\widehat\alpha_0}\\
			&\le C2^{-k\widehat\alpha_0} \omega_0(R_0) + C{M}2^{-k\widehat\alpha_0}\frac{1-2^{k(\widehat\alpha_0-1-\sigma+2s)}}{1-2^{\widehat\alpha_0-1-\sigma+2s}}.
		\end{align*}
		It follows that
		\begin{equation}\label{w0_1}
			\omega_0(2^{-k}R_0) \le C2^{-k\tilde\alpha_0} \left( \omega_0(R_0) + M\right), \quad \mbox{where } \tilde{\alpha}_0 = \min\{\widehat{\alpha}_0, 1 + \sigma - 2s \}.
		\end{equation}
		For any $\rho \in [\sqrt{\varepsilon}, R_0/2)$, let $k$ be the integer such that $2^{-k-1}R_0 < \rho \le 2^{-k}R_0$. Then by \eqref{omega_iteration_zero} and \eqref{w0_1},
		\begin{equation}\label{w0_2}
			\omega_0(\rho) \le \omega_0(2^{-k}R_0)+C{M}2^{-k(1+\sigma-2s)}\le C \left( \omega_0(R_0) + M\right)\rho^{\tilde\alpha_0}.
		\end{equation}
		For any $\rho \in (0,\sqrt{\varepsilon})$, by \eqref{omega_iteration_zero} and \eqref{w0_2}, we have 
		\begin{equation}\label{w0_3}
			\omega_0(\rho) \le \omega_0(\sqrt{\varepsilon})+C{M}
			\varepsilon^{(1+\sigma-2s)/2}
			\le C \left( \omega_0(R_0) + M\right)\varepsilon^{\tilde\alpha_0/2}.
		\end{equation}
		Therefore, \eqref{v_bar_L2_sphere_new} is proved by combining \eqref{w0_2} and \eqref{w0_3}.
	\end{proof}
	In the following lemma, we first show that $u_+$ has a uniform $C^{\beta}$ bound for some $\beta\in(0,1)$ independent of $\varepsilon$. As a consequence of the Schauder estimates for Neumann boundary problems, $\nabla u_-$ also has a uniform $C^{\beta}$ bound.
	\begin{lemma}\label{lem:u_holder}
		Let $u$ be the solution of \eqref{robin}. Then there exist some $\beta\in(0,\sigma]\subset (0,1)$ and a universal positive constant $C$ depending also on $\gamma$, such that 
		\begin{equation}\label{u_holder}
			\|u\|_{C^\beta(\Omega_{R_0/4})}\le C \,\|\varphi\|_{L^\infty(\partial\Omega)},
		\end{equation}
		and 
		\begin{equation}\label{u-_1+holder}
			\|\nabla u\|_{C^{\beta}(\mathcal{D}_j)}\le \frac{C}{k_j} \,\|\varphi\|_{L^\infty(\partial\Omega)},\quad \mbox{for } j=1,2.
		\end{equation}
        Moreover, if the domain $\widetilde\Omega$ is symmetric in $x_j$ and the solution $u$ is odd in $x_j$ for some $1 \le j \le n-1$, then the constant $C$ depends on $\gamma$ only through $\gamma_0$.
	\end{lemma}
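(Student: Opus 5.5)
We plan to prove \eqref{u_holder} by a Campanato-type decay argument in the narrow region and then deduce \eqref{u-_1+holder} from the Neumann problem inside each inclusion.

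\textbf{Step 1: oscillation decay for $u_+$.} Fix $x_0\in\Omega_{R_0/4}$ and set $r=\frac14(\varepsilon+|x_0'|^2)^{1/2}$. We apply \eqref{gradient-1/2a} of Theorem \ref{thm-1/2}, with $g_i=u_-|_{\Gamma_\pm}$. By \eqref{u_C0}, \eqref{u_holder_inside} and the lower bound $k_i\ge k_0$, these data satisfy $\|g_i\|_{C^{\beta}_{x'}}\le C\|\varphi\|_{L^\infty}$ for every $\beta<1$. The right-hand side of \eqref{gradient-1/2a} is therefore at most $C\|\varphi\|_{L^\infty}$. This gives $r|\nabla u_+|\le C\|\varphi\|_{L^\infty}$ on $\Omega_{x_0,r}$, which is scale invariant but not yet a H\"older bound.

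To obtain decay across scales, we work in the reduced variable. We show that the vertical averages $\bar v$ of Lemma \ref{lemma_reduction} are H\"older in $y'$, and that $v-\bar v$ is small: by Lemma \ref{lemma_reduction}, $|v-\bar v|\le C\varepsilon\max|\partial_n v|$, and Theorem \ref{thm-1/2} gives $\varepsilon|\partial_n v|\le C(\varepsilon+|y'|^2)^{1/2}\|\varphi\|$. Applying Proposition \ref{prop_grad_v_bar_control}(iii) with $G=G_1+G_2$, $s=1/2$ and some small $\sigma$, the bound $|G_2|\le C\|\varphi\|$ fits $\|G\|_{\varepsilon,\sigma-1,1/2}<\infty$ provided $\sigma\le 1$. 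This yields
\[
\dashnorm{\bar v-\bar v(0)}{L^2(\partial B_R)}\le C\|\varphi\|(\varepsilon+R^2)^{\tilde\alpha_0/2}.
\]
The same argument applies centered at any point $y_0'$, after translating; the coefficient $\varepsilon+|y'|^2$ then becomes $\varepsilon+|y'-y_0'+y_0'|^2$. We handle this by a perturbative version, or more simply as follows. For $|y_0'|\lesssim R$ we use centering at $0$. For larger scales we use the local estimate of Step 1 at the scale $\sqrt{\varepsilon+|y_0'|^2}$, combined with the uniform bound, by interpolation. We then convert the $L^2(\partial B_R)$ decay to pointwise H\"older decay using the local $L^\infty$ control from Theorem \ref{thm-1/2} on annuli. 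With $\beta\le\tilde\alpha_0$ this gives \eqref{u_holder}.

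\textbf{Step 2: the gradient inside the inclusions.} Inside $\cD_j$, $u_-$ solves a Neumann problem with data $-(k_j\gamma)^{-1}(u_+-u_-)$, and this datum is now $C^\beta$ on $\partial\cD_j$ near the gap by \eqref{u_holder}, and away from the gap by \eqref{u_C1_outside}. The Schauder estimate for the Neumann problem (\cite{Lieberman87}) yields \eqref{u-_1+holder} with the factor $1/k_j$.

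\textbf{Step 3: the symmetric case.} When $u$ is odd in $x_j$, the zero mode of $\bar v$ on spheres vanishes up to higher-order terms. We then use \eqref{v_bar_L2_sphere_old} with $\widehat\alpha$ in place of $\tilde\alpha_0$, together with the constant $C_1$ of \eqref{gradient-1/2a}, which depends on $\gamma$ only through $\gamma_0$. This removes the dependence on the upper size of $\gamma$.

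\textbf{Main obstacle.} The main obstacle is passing from decay centered at the origin to a genuine H\"older seminorm uniform in $\varepsilon$. I would handle it by splitting the estimate of $|u(x)-u(z)|$ into two cases. If $|x'-z'|\ge\frac14\max(\sqrt{\varepsilon+|x'|^2},\sqrt{\varepsilon+|z'|^2})$, we compare both values to $\bar v(0)$ using the centered decay. Otherwise, we use the local gradient bound of Step 1, which gives $|u(x)-u(z)|\le C|x-z|/r\le C|x-z|^\beta r^{-\beta}\cdot(\cdots)$. Upgrading this to $|x-z|^\beta$ requires knowing the oscillation on $\Omega_{x_0,2r}$ is $O(r^\beta)$, which again comes from the centered decay.
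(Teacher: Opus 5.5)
Your outline has the right skeleton: the starting bound $|\nabla u|\le C(\varepsilon+|x'|^2)^{-1/2}$, decay of $\bar v-\bar v(0)$ from Proposition \ref{prop_grad_v_bar_control}(iii) with $s=1/2$, and then Schauder for the Neumann problem in $\mathcal{D}_j$. However, two steps fail as written.

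\textbf{The bound on $G_2$.} A merely bounded $G_2$ does not give $\|G\|_{\varepsilon,\sigma-1,1/2,B_{R_0}}\le C$ uniformly in $\varepsilon$. At $|y'|=\sqrt{\varepsilon}$ the weight $|y'|^{\sigma-1}(\varepsilon+|y'|^2)^{1/2}$ is comparable to $\varepsilon^{\sigma/2}$. So a uniform bound forces $|G_2(y')|\lesssim|y'|^{\sigma}$ for $|y'|\ge\sqrt{\varepsilon}$. Indeed, a nonzero constant part of $G$ makes the Dirichlet piece $v_2$ in the proof of the proposition of size $O(1)$ at every scale, so there is no decay.

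The missing step is to normalize the Robin data at the origin. Replace $u$ by $u-(a+b_nx_n)$, with $a=(g_1(0')+g_2(0'))/2$ and $b_n=(g_1(0')-g_2(0'))/(2\gamma+\varepsilon)$, so that the new data $\hat g_i$ vanish at $0'$. You recorded the $C^\theta$ bound on $g_i$ from \eqref{u_holder_inside} in Step 1 but never used it; here it gives $|\hat g_i(x')|\le C|x'|^\theta+C|x'|^2$. Hence $|G_2|\le C|y'|^\theta$, and $\|G_2\|_{\varepsilon,\sigma-1,1/2,B_{R_0}}\le C$ once $\sigma\le\theta$.

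\textbf{From decay to a gradient bound.} Converting the decay of $\bar v-\bar v(0)$ into a gradient bound, and then into \eqref{u_holder}, is not justified. Estimate \eqref{gradient-1/2a} measures $u$ against the fixed constant $(g_1(0')+g_2(0'))/2$. Adding a constant to $u$ shifts the data by the same constant, so this constant cannot be traded for $\bar v(0)$. Without symmetry the two differ, and your outline never shows the difference is $O(\varepsilon^{\beta/2})$. For the same reason, the local $L^\infty$ control of $u-\bar v(0)$ that you attribute to Theorem \ref{thm-1/2} is not available.

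What is needed is the oscillation estimate \eqref{gradient-1/2}. Its extra term $r\|\partial_{x_n}u\|_{L^\infty(\Omega_{x_0,2r})}$ is only $O(1)$ under $|\nabla u|\le C/r$, which destroys the gain. The paper first proves $|\partial_n u|\le C$ in $\Omega_{R_0}$:
\begin{itemize}
\item on $\Gamma_\pm$, the Robin condition gives $|\partial_n u|\le C|x'||\nabla_{x'}u|+C|u|+C|\hat g_i|\le C$, because $|\nabla h_i|\le C|x'|$ and $|\nabla_{x'}u|\le C(\varepsilon+|x'|^2)^{-1/2}$;
\item the lateral boundary is controlled by \eqref{u_C1_outside};
\item $\partial_n u$ is harmonic, so the maximum principle applies.
\end{itemize}
With this, \eqref{gradient-1/2} yields $|\nabla u(x)|\le C(\varepsilon+|x'|^2)^{(\beta-1)/2}$ with $\beta=\min\{\alpha_0,\sigma\}$. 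Then \eqref{u_holder} follows by chaining through the dyadic regions $\Omega_{2R}\setminus\Omega_{R/2}$, with no need to re-center the reduced equation at $y_0'$.

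This is also exactly where the dependence on $\gamma$ enters, through the constant $C_2$. In the odd case $\bar v(0)=g_1(0')=g_2(0')=0$, so one may return to \eqref{gradient-1/2a} and $C$ depends on $\gamma$ only through $\gamma_0$. Your Steps 2 and 3 are fine once these two points are repaired.
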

	\begin{proof}
		Without loss of generality, we assume that $\|\varphi\|_{L^\infty(\partial\Omega)} = 1$. Recall that $g_i=u_-|_{\partial \cD_i}$. Similar to the proof of Theorem \ref{thm-1/2}, we first subtract a linear function of $x_n$ such that $g_1(0')=g_2(0')=0$. We define \begin{equation}\label{subtr_0}
			\hat u:=u-(a+b_n x_n),
		\end{equation}
		where
		$$a=\frac{g_1(0')+g_2(0')}{2}, \quad b_n=\frac{g_1(0')-g_2(0')}{2\gamma+\varepsilon}.$$
		Note that $|\nabla (a+b_nx_n)|= |b_n|\le C \|\varphi\|_{L^{\infty}(\partial\Omega)}=C$.
		Then $\hat u$ satisfies the equation 
		\begin{equation}\label{main_problem_narrow_4}
			\begin{cases}
				\Delta \hat u_+ =0&\mbox{in}~\Omega_{R_0},\\
				\hat u_+ + \gamma \partial_\nu \hat u_+ = \hat g_1&\mbox{on}~\Gamma_+,\\
				\hat u_+ + \gamma \partial_\nu \hat u_+ = \hat g_2&\mbox{on}~\Gamma_-,
			\end{cases}
		\end{equation}
		{ where
			$$\hat{g}_1(x')=g_1(x')-\big(a+b_n(\frac{\varepsilon}{2}+h_1(x'))+\gamma\frac{b_n}{\sqrt{1+|\nabla h_1(x')|^2}}\big),$$
			$$\hat{g}_2(x')=g_2(x')-\big(a+b_n(-\frac{\varepsilon}{2}+h_2(x'))-\gamma\frac{b_n}{\sqrt{1+|\nabla {h_2}(x')|^2}}\big).$$
			Then we have $\hat{g}_1(0')=\hat{g}_2(0')=0$, and 
			$$\hat{g}_1(x')=\hat{g}_1(x')-\hat{g}_1(0')=g_1(x')-g_1(0')-b_n h_1(x')-\gamma b_n\big(\frac{1}{\sqrt{1+|\nabla h_1(x')|^2}}-1\big),$$
			$$\hat{g}_2(x')=\hat{g}_2(x')-\hat{g}_2(0')=g_2(x')-g_2(0')-b_n h_2(x')+\gamma b_n\big(\frac{1}{\sqrt{1+|\nabla h_2(x')|^2}}-1\big).$$
			Therefore, by \eqref{u_holder_inside},  \eqref{fg_0}, \eqref{def:c_2}, and \eqref{fg_2}, for any $\theta\in(0,1)$ fixed,
			\begin{equation}\label{g12}
				|\hat g_1(x')|+|\hat g_2(x')|\le C \big( \frac{1}{k_1} + \frac{1}{k_2} \big) |x'|^{\theta}+C|x'|^2\le C|x'|^\theta.
		\end{equation}}  We then apply the dimension reduction argument to \eqref{main_problem_narrow_4} in Section \ref{sec4} with $\hat u_+$ in place of $u_+$ and $\hat{g}_i$ in place of $g_i$.
		Recall that $\mu = 1$.  We perform the subtraction \eqref{subtr_0}, make the change of variables \eqref{x_to_y}, let $v(y) = \hat{u}(x)$, and let $\bar{v}$ be the vertical average defined as in \eqref{v_bar_def}. Then by Lemma \ref{lemma_reduction}, $\bar{v}$ satisfies \eqref{equation_v_bar} with $\mu=1$, $F$, $G_1$, and $G_2$ satisfying \eqref{FG_bound} with $\hat{g}_i$ in place of $g_i$. Since $|\nabla(a+b_nx_n)|=|b_n|\le C$, it suffices to prove the estimate \eqref{u_holder} for $\hat{u}$. For simplicity of notation, we still denote $\hat u$ by $u$.
		By {\eqref{gradient-1/2}}, we have
	\begin{equation}\label{grad_u_start}
			|\nabla u(x)|\le C(\varepsilon+|x'|^2)^{-1/2}, \quad x\in \Omega_{R_0},
		\end{equation}
		and consequently,
		\begin{equation}\label{grad_v_bar_start}
			|\nabla \bar{v}(y')|\le C(\varepsilon+|y'|^2)^{-1/2}, \quad y\in Q_{R_0,\varepsilon}.
		\end{equation}
		On the other hand, since $\partial_\nu u =  - \gamma^{-1}u+\gamma^{-1}\hat{g}_i$ on $\Gamma_+$ and $\Gamma_-$, we have, by
		\eqref{fg_0} 
		and \eqref{grad_u_start} that
		$$|\partial_n u(x)| \le {C|x'|\,|\nabla_{x'}u|} + C|u|+C|\hat{g}_i| \le C, \quad \forall x \in \Gamma_+ \cup \Gamma_-.$$
		By the harmonicity of $\partial_n u$, the estimate \eqref{u_C1_outside}, and the maximum principle,
		\begin{equation*}
			|\partial_n u| \le C \quad \mbox{in}~~\Omega_{R_0},
		\end{equation*}
		and consequently,
		\begin{equation}
			\label{grad_n_v_bound}
			|\partial_n v| \le C(\varepsilon+|y'|^2)/\varepsilon \quad \mbox{in}~~Q_{R_0,\varepsilon},
		\end{equation}
		where $Q_{R_0,\varepsilon}$ is defined as in \eqref{Q_s_t}. 
		Thus,
		\begin{equation}
			\label{v-v_bar}
			|v(y', y_n) - \bar{v}(y')| \le 2\varepsilon \max_{y_n\in (- \varepsilon , \varepsilon)} |\partial_n v(y',y_n)| \le C (\varepsilon + |y'|^2) \quad \mbox{in}~~Q_{R_0,\varepsilon}.
		\end{equation}
		On the other hand, $e^i$ in \eqref{a_ij_formula} can be bounded by $C|y'|^{2+\sigma}$. 
		
		Therefore, by \eqref{FG_bound}, \eqref{g12},  \eqref{grad_v_bar_start}, and \eqref{grad_n_v_bound}, $F$, ${G}_1$, and $G_2$  satisfy
		$$
		\|F\|_{\varepsilon, \sigma, s_0, B_{R_0}} + \|{G}_1\|_{\varepsilon, \sigma-1, s_0, B_{R_0}}+\|G_2\|_{\varepsilon, \theta-1, s_0, B_{R_0}}
		\le C,
		$$
		where $s_0:=1/2$. Here we recall the definition of the norm in \eqref{def_norm}.
		Let $G:={G}_1+{G}_2$. By decreasing $\sigma$ if necessary, we may assume that $\sigma\le \theta$, $1+\sigma-2s_0=\sigma\ne\widehat\alpha_0$, where $\widehat{\alpha}_0 = \min\{\alpha_0, 1\}$, and therefore 
		\begin{equation}\label{F_G0}
			\|F\|_{\varepsilon, \sigma, s_0, B_{R_0}} + \|G\|_{\varepsilon, \sigma-1, s_0, B_{R_0}}\le C.
		\end{equation}
		Then by \eqref{F_G0}, the estimate \eqref{v_bar_L2_sphere_new} in Proposition \ref{prop_grad_v_bar_control} implies that 
		\begin{equation}\label{v_bar_decay_1}
			\dashnorm{\bar{v} - \bar{v}(0)}{L^2(\partial B_{2\varepsilon^{1/2}})}\le C \varepsilon^{\beta/2},
		\end{equation}
		where $\beta = \min\{\widehat\alpha_0 , 1+ \sigma - 2s_0 \}=\min\{\alpha_0, \sigma\}$.
		Then by \eqref{v_bar_decay_1}, \eqref{v-v_bar}, and the triangle inequality, we have
		\begin{equation}
			\begin{aligned}\label{v_L2_control_1}
				\dashnorm{u-(u)_{\Omega_{2\varepsilon^{1/2}}}}{L^2(\Omega_{2\varepsilon^{1/2}})} \le &\dashnorm{u-\bar{v}(0)}{L^2(\Omega_{2\varepsilon^{1/2}})}  
				\\ \le &  C \dashnorm{v-\bar v(0)}{L^2(Q_{2\varepsilon^{1/2} , \varepsilon})} \\
				\le& C( \dashnorm{v-\bar{v}}{L^2(Q_{2\varepsilon^{1/2} , \varepsilon})} + \dashnorm{\bar v-\bar{v}(0)}{L^2(Q_{2\varepsilon^{1/2} , \varepsilon})})\\
				\le& C \varepsilon^{\beta/2}.
			\end{aligned}
		\end{equation}
		By \eqref{v_L2_control_1}, \eqref{gradient-1/2} with $g_1(0') = g_2(0') = 0$, and the fact that $\beta\le \sigma\le \theta<1$, we have
		$$
		\| \nabla u\|_{L^\infty (\Omega_{\varepsilon^{1/2}})} \le C\varepsilon^{\frac{\beta-1}{2}}+C+C \big( \frac{1}{k_1} + \frac{1}{k_2} \big)\varepsilon^\frac{\theta-1}{2}\le C\varepsilon^{\frac{\beta-1}{2}}.
		$$
		Similarly, for any $R \in (\varepsilon^{1/2}, R_0/4)$,  the estimates \eqref{v_bar_L2_sphere_new} in Proposition \ref{prop_grad_v_bar_control} with $s=s_0$ and \eqref{v-v_bar} imply
		$$
		\dashnorm{u-(u)_{\Omega_{4R}\setminus \Omega_{R/4}}}{L^2(\Omega_{4R}\setminus \Omega_{R/4})} \le C R^{\beta}, 
		$$
		which implies, by \eqref{gradient-1/2} with $g_1(0') = g_2(0') = 0$,
		$$
		\| \nabla u\|_{L^\infty ({\Omega_{2R}\setminus \Omega_{R/2}})} \le CR^{\beta-1}
		$$
		for any $R \in (\varepsilon^{1/2}, R_0/4)$. Thus, we obtain a better bound for the gradient
		\begin{equation}\label{Du-1st}
			|\nabla u(x)|\le C(\varepsilon+|x'|^2)^\frac{\beta-1}{2}, \quad x\in \Omega_{R_0}.
		\end{equation}
		Next, we derive the $C^\beta$ estimate \eqref{u_holder} directly from \eqref{Du-1st}.
		For $x,y\in\Omega_{2\varepsilon^{1/2}}$, by \eqref{Du-1st}, \eqref{fg_0}, and \eqref{fg_1}, we have
		$$|u(x)-u(y)|\le \|Du\|_{L^\infty(\Omega_{2\varepsilon^{1/2}})}|x-y|^\beta (|x|+|y|)^{1-\beta}\le C|x-y|^\beta,
		$$
		and therefore
		\begin{equation}\label{holder1}   [u]_{C^\beta(\Omega_{2\varepsilon^{1/2}})}\le C.
		\end{equation}
		Similarly, for $R\in (\varepsilon^{1/2}, R_0/4)$, \eqref{Du-1st} also implies that
		\begin{equation}\label{holder2}   [u]_{C^\beta(\Omega_{2R}\setminus\Omega_{R/2})}\le C.
		\end{equation}
		Now we show that 
		\begin{equation}\label{holder3}   [u]_{C^\beta(\Omega_{R_0/4}\setminus\Omega_{\varepsilon^{1/2}})}\le C.
		\end{equation}
		For $x,y\in \Omega_{R_0/4}\setminus \Omega_{\varepsilon^{1/2}}$, assume without loss of generality that $|x'|\ge |y'|$. Let $R=|x'|$ and $m$ be the nonnegative integer such that $2^{-m-1}R< |y'|\le 2^{-m}R$. If $m=0$,  $x,y\in \Omega_{2R}\setminus \Omega_{R/2}$. Then \eqref{holder2} implies that 
		$$|u(x)-u(y)|\le C|x-y|^\beta.$$ Otherwise, we have $m\ge 1$. Then $$|x-y|\ge |x'|-|y'|\ge (1-2^{-m})R\ge R/2.$$
		We choose $z_k\in \Omega_R$ such that $|z_k'|=2^{-k}R$ for $k=1,2,\dots,m$, and denote $z_0=x$, $z_{m+1}=y$. For $k=0,1,\dots, m$, we know that $z_k, z_{k+1}\in \Omega_{2R_k}\setminus\Omega_{R_k/2}$, where $R_k=2^{-k}R$. Thus by \eqref{holder2}, \eqref{fg_0}, and \eqref{fg_1}, 
		$$
		\begin{aligned}
			&|u(x)-u(y)|
			\\&\le 
			\sum_{k=0}^m |u(z_k)-u(z_{k+1})|
			\le C\sum_{k=0}^m |z_k-z_{k+1}|^\beta\le C\sum_{k=0}^m 2^{-k\beta}R^\beta\le C|x-y|^\beta.
		\end{aligned}
		$$
		The estimate \eqref{holder3} is proved. Finally, for $x\in \Omega_{R_0/4}\setminus\Omega_{2\varepsilon^{1/2}}$, $y\in \Omega_{\varepsilon^{1/2}}$,
		$$|x-y|\ge |x'|-\varepsilon^{1/2}\ge |x'|/2\ge \varepsilon^{1/2}.$$
		{We choose} $z\in \Omega_{R_0/4}$ such that $|z'|=\varepsilon^{1/2}$. Then by \eqref{holder1}, \eqref{holder3}, \eqref{fg_0}, and \eqref{fg_1}, 
		\begin{equation}\label{holder4}
			\begin{aligned}
				&|u(x)-u(y)|\\&\le |u(x)-u(z)|+|u(z)-u(y)|\le C |x-z|^\beta+C|z-y|^\beta \le C |x'|^\beta\le C|x-y|^\beta.
			\end{aligned}
		\end{equation}
		The H\"older estimate \eqref{u_holder} follows from \eqref{holder1}, \eqref{holder3}, and \eqref{holder4}. By \eqref{u_holder} and \eqref{u_C1_outside}, \eqref{u-_1+holder} follows again from the $C^{1,\beta}$ estimate for the Neumann boundary problem. See \cite{Lieberman87}*{Theorem 1.2}. The {first assertion of the} lemma is proved. 
        
        In the case where the domain $\widetilde\Omega$ is symmetric in $x_j$ and the solution $u$ is odd in $x_j$ for some $1 \le j \le n-1$, we have
        $$\bar{v}(0) = g_1(0') = g_2(0') = 0.$$
        We may replace the gradient estimate \eqref{gradient-1/2} with \eqref{gradient-1/2a} so that the constant $C$ depends on $\gamma$ only through $\gamma_0$.
	\end{proof}
	
	Next, we subtract a linear function so that {the new $g_1 + g_2$ and $G_2$ are} sufficiently small around $x'=y'=0$.
	
	Consider \begin{equation}\label{subtr}
		\tilde u_+=u_+-(a+b\cdot x).
	\end{equation} By direct computations, we have 
	on $\Gamma_+$,
	$$\tilde u_+ + \gamma \partial_\nu \tilde u_+=g_1-\gamma  \Big(  \frac{-b'\cdot\nabla h_1(x')+b_n}{\sqrt{1 + |\nabla h_1(x')|^2}} \Big) - a-b'\cdot x'-b_n\Big(\frac{\varepsilon}{2}+h_1(x')\Big) := \tilde g_1(x'),$$
	and on $\Gamma_-$
	$$\tilde u_+ + \gamma \partial_\nu \tilde u_+=g_2-\gamma  \Big(  \frac{ b'\cdot\nabla h_2(x')-b_n}{\sqrt{1 + |\nabla h_2(x')|^2}} \Big) - a-b'\cdot x'-b_n\Big(-\frac{\varepsilon}{2}+h_2(x')\Big) := \tilde g_2(x').$$
	First, similar to Lemma \ref{lem:u_holder}, we choose
	$$a=\frac{g_1(0')+g_2(0')}{2}, \quad b_n=\frac{g_1(0')-g_2(0')}{2\gamma+\varepsilon},$$
	so that $\tilde g_1(0')=\tilde g_2(0')=0$.
	By \eqref{fg_0}, \eqref{def:c_2} and $\|\nabla g_i\|_{C^{\beta}}\le C/k_i$ as a consequence of \eqref{u-_1+holder} , we have 
	\begin{equation}\label{g1+g2}
		\begin{aligned}
			&\tilde g_1(x')+\tilde g_2(x')=\tilde g_1(x')-\tilde g_1(0)+\tilde g_2(x')-\tilde g_2(0)\\
			&=g_1(x')-g_1(0')+g_2(x')-g_2(0')-2b'\cdot x'\\
			&\quad+\gamma\Big(\frac{b'\cdot\nabla h_1(x')}{\sqrt{1 + |\nabla h_1(x')|^2}}-\frac{b'\cdot\nabla h_2(x')}{\sqrt{1 + |\nabla h_2(x')|^2}}\Big)+O(|x'|^2)
			\\&=(\nabla g_1(0')+\nabla g_2(0')-2b')\cdot x'\\
			&\quad +\gamma\Big(\frac{b'\cdot\nabla h_1(x')}{\sqrt{1 + |\nabla h_1(x')|^2}}-\frac{b'\cdot\nabla h_2(x')}{\sqrt{1 + |\nabla h_2(x')|^2}}\Big)+O(|x'|^{1+\beta}) \Big( \frac{1}{k_1} + \frac{1}{k_2} \Big)+O(|x'|^2).
		\end{aligned}
	\end{equation}
	Note that $\nabla h_1(x')- \nabla h_2(x')=2 x'+O(|x'|^{1+\sigma})$ and $0<\beta\le \sigma<1$. We then take 
	$$
	b'=\left\{
	\begin{aligned}
		&-\frac{1}{2(\gamma-1)}(\nabla g_1(0')+\nabla g_2(0')), &&\mbox{if } \gamma\neq 1,\\
		&0, && \mbox{if } \gamma= 1,
	\end{aligned}
	\right.
	$$ and 
	thus
	\begin{equation}\label{tildeg}\left\{
		\begin{aligned}
			&|\tilde g_1(x')+\tilde g_2(x')|\le  C|x'|^{1+\beta}, &&\mbox{if } \gamma\neq 1,\\
			&|\tilde g_1(x')+\tilde g_2(x')|\le  C\Big( \frac{1}{k_1} + \frac{1}{k_2} \Big)|x'|{ +C|x'|^2}, && \mbox{if } \gamma= 1,
		\end{aligned}
		\right.
	\end{equation}
	where $C>0$ is a universal constant independent of $\varepsilon$.

	Since $\tilde u_+$ satisfies
	\begin{equation*}
		\begin{cases}
			\Delta \tilde u_+ =0&\mbox{in}~\Omega_{R_0},\\
			\tilde u_+ + \gamma \partial_\nu \tilde u_+ = \tilde g_1&\mbox{on}~\Gamma_+,\\
			\tilde u_+ + \gamma \partial_\nu \tilde u_+ = \tilde g_2&\mbox{on}~\Gamma_-,
		\end{cases}
	\end{equation*}
	applying the dimension reduction argument in Section \ref{sec4} with $\tilde u_+$ in place of $u_+$ and using \eqref{tildeg}, we obtain
	\begin{lemma}\label{lemma_reduction_2}
		Let $h_1,h_2$ satisfy \eqref{fg_0}, \eqref{def:c_2}, \eqref{fg_2}, \eqref{fg_3}, {$u$ be the restriction to $\Omega_{R_0}$ of the global solution of \eqref{robin}, with $g_i$ its interior trace on $\Gamma_\pm$}. Perform the subtraction \eqref{subtr}, the change of variables \eqref{x_to_y}, and let $v(y) = \tilde u_+(x)$, $\bar{v}$ be defined as in \eqref{v_bar_def}. Then $\bar{v}$ satisfies the equation
		\begin{equation}\label{equation_v_bar_2}
			\dv((\varepsilon+|y'|^2)\nabla \bar v) - 2\gamma^{-1} \bar{v} = \dv F + \tilde{G}_1+\tilde{G}_2 \quad \mbox{in}~B_{R_0} \subset \bR^{n-1},
		\end{equation}
		where
		\begin{equation}\label{FG_bound_b}
			\begin{aligned}
				|F(y')| &\le C\Big(\varepsilon|y'| \overline{|\partial_n v|} + |e^1| |\nabla_{y'}\overline{v}|\Big), \\
				|\tilde{G}_1(y')| &\le C\Big(\varepsilon \max_{y_n \in (-\varepsilon, \varepsilon)} |\partial_n v| + |y'|^2 |\overline{v}| +|y'|^2\Big),\\
				|\tilde{G}_2(y')|&\le C  |y'|^{1+\beta},\quad \mbox{if } \gamma\neq 1,\\
				|\tilde{G}_2(y')|&\le C  \Big( \frac{1}{k_1} + \frac{1}{k_2} \Big) |y'|,\quad \mbox{if } \gamma= 1,
			\end{aligned}
		\end{equation}
		and $C$ is some positive constant depending only on  $n$, $k_0$, $\gamma$, $\|h_1\|_{C^{2,\sigma}}$, $\|h_2\|_{C^{2,\sigma}}$, and $\|\varphi\|_{L^\infty}$.
	\end{lemma}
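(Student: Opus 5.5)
The plan is to apply Lemma \ref{lemma_reduction} verbatim to $\tilde u_+$ in place of $u$, with boundary data $\tilde g_1,\tilde g_2$. Since $\tilde u_+$ is harmonic in $\Omega_{R_0}$ and satisfies the Robin conditions with data $\tilde g_i$, the vertical average $\bar v$ of $v(y)=\tilde u_+(x)$ solves \eqref{equation_v_bar} with $\mu=1$. The forcing is $\dv F+G_1+G_2$, where $F$ and $G_1$ satisfy \eqref{FG_bound}. We then set $\tilde G_2:=-\gamma^{-1}\sqrt{1+|\nabla h_1|^2}\,\tilde g_1-\gamma^{-1}\sqrt{1+|\nabla h_2|^2}\,\tilde g_2$. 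The bound for $F$ transfers unchanged.

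For $\tilde G_2$, we split
$$\tilde G_2=-\gamma^{-1}(\tilde g_1+\tilde g_2)-\gamma^{-1}\big[(\sqrt{1+|\nabla h_1|^2}-1)\tilde g_1+(\sqrt{1+|\nabla h_2|^2}-1)\tilde g_2\big].$$
The first piece is controlled directly by \eqref{tildeg}. This gives $C|y'|^{1+\beta}$ when $\gamma\neq1$, and $C(1/k_1+1/k_2)|y'|+C|y'|^2$ when $\gamma=1$.

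For the bracket, we use $\sqrt{1+|\nabla h_i|^2}-1=O(|y'|^2)$ from \eqref{fg_0} and \eqref{def:c_2}. We also need $|\tilde g_i(y')|\le C|y'|$, which follows from $\tilde g_i(0')=0$ together with the Lipschitz bound on $g_i$ from \eqref{u-_1+holder} (this is where $k_0$ enters). The constants $|a|,|b|$ are bounded by $C\|\varphi\|_{L^\infty}$ via \eqref{u_C0}, and $b'$ is finite since we take $b'=0$ when $\gamma=1$. Hence the bracket is $O(|y'|^3)$, which is absorbed into $\tilde G_2$ in either case. In the case $\gamma=1$, the leftover $C|y'|^2$ from \eqref{tildeg} is moved into $\tilde G_1$.

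The one remaining point is that $G_1$ in \eqref{FG_bound} is written in terms of $v$, and the extra $|y'|^2$ term appears in $\tilde G_1$. Since $G_1$ depends only on $v$ and its traces, its form is unchanged. We then define $\tilde G_1:=G_1+(\text{the }C|y'|^2\text{ remainder when }\gamma=1)$, which yields the stated bound with the additional $|y'|^2$ term.

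The main thing to verify carefully is the uniform Lipschitz control of $\tilde g_i$ near $0'$, since everything hinges on $\tilde g_i$ vanishing linearly. This requires the interior $C^{1,\beta}$ bound \eqref{u-_1+holder} up to $\Gamma_\pm$, together with bounded $a,b$. Both are supplied by Lemma \ref{lem:u_holder} and the choice of $a,b$ above, so the dependence of $C$ on $\|\varphi\|_{L^\infty}$ and $k_0$ is as claimed.
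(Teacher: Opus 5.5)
Your route matches the paper's except at one step. Applying Lemma \ref{lemma_reduction} to $\tilde u_+$ with data $\tilde g_i$, keeping $F$ and $G_1$, and, for $\gamma\neq1$, taking $\tilde G_2=G_2$ and bounding it through \eqref{tildeg} and $\sqrt{1+|\nabla h_i|^2}-1=O(|y'|^2)$ is exactly what the paper does.

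The gap is in the case $\gamma=1$, where you absorb the bracket $-\gamma^{-1}\big[(\sqrt{1+|\nabla h_1|^2}-1)\tilde g_1+(\sqrt{1+|\nabla h_2|^2}-1)\tilde g_2\big]$ into $\tilde G_2$. For $\gamma=1$ the required bound is $|\tilde G_2|\le C(1/k_1+1/k_2)|y'|$, with $C$ depending only on the lower bound $k_0$, i.e.\ uniformly as $k_1,k_2\to\infty$. Your estimate $C|y'|^3$ for the bracket carries no factor $1/k_i$, and it cannot. With $b'=0$ one has $\tilde g_1=(g_1-g_1(0'))-b_nh_1-\gamma b_n\big((1+|\nabla h_1|^2)^{-1/2}-1\big)$. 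Here $b_n=(g_1(0')-g_2(0'))/(2\gamma+\varepsilon)$ is only bounded by $C\|\varphi\|_{L^\infty}$; nothing makes it $O(1/k_1+1/k_2)$, since as $k_i\to\infty$ the interior potentials tend to two constants which in general differ. Hence the bracket contains a $k$-independent piece of size $|b_n|\,|y'|^4$, which is not bounded by $C(1/k_1+1/k_2)|y'|$ for large $k_i$.

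This is not cosmetic. The factor $(1/k_1+1/k_2)$ in $\tilde G_2$ is what gives \eqref{G2-2}, and hence the $k$-dependence in \eqref{grad_u_upperbound3}. The lemma is also stated, and needed for Theorem \ref{thm_upperbound_3}, without symmetry. So you cannot invoke $g_i(0')=0$, which would give $b_n=0$ and make your allocation harmless.

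The fix is the paper's allocation. For $\gamma=1$, define explicitly $\tilde G_2:=-\gamma^{-1}\big(g_1(y')-g_1(0')+g_2(y')-g_2(0')\big)$. This is bounded by $C(1/k_1+1/k_2)|y'|$ directly from $\|\nabla g_i\|_{L^\infty}\le C/k_i$ in \eqref{u-_1+holder}. Then put $\tilde G_1:=G_1+G_2-\tilde G_2$. The difference $G_2-\tilde G_2$ consists of your bracket, which is $O(|y'|^2)$ using only $|\tilde g_i|\le C$, plus the $b_n$-terms in \eqref{g1+g2}, which are also $O(|y'|^2)$ with $k$-independent constants. All of it is absorbed by the $|y'|^2$ term allowed in the bound for $\tilde G_1$.

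This explicit definition also makes precise your step of ``moving the leftover $C|y'|^2$ from \eqref{tildeg} into $\tilde G_1$''. As written, that step refers to an inequality rather than to an identified function.
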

	{
		\begin{proof}
			By Lemma \ref{lemma_reduction}, $\bar{v}$ satisfies \eqref{equation_v_bar} with $\mu=1$, $F$, $G_1$, and $G_2$ satisfying \eqref{FG_bound} with $\tilde{g}_i$ in place of $g_i$. In particular,
			$$G_2(y')=-\gamma^{-1}\sqrt{1+ |\nabla_{y'}h_1|^2} \tilde{g}_1(y') - \gamma^{-1}\sqrt{1+ |\nabla_{y'}h_2|^2} \tilde{g}_2(y').$$
			We then define
			$$
			\tilde{G}_2(y'):=
			\left\{
			\begin{aligned}
				&G_2(y'), &&\mbox{if } \gamma\neq 1,\\
				&-\gamma^{-1} ({g}_1(y')-g_1(0')+{g}_2(y')-g_2(0')), && \mbox{if } \gamma= 1,
			\end{aligned}
			\right.
			$$   
			and
			$$\tilde{G}_1(y'):=G_1(y')+G_2(y')-\tilde{G}_2(y').$$
			For the case when $\gamma\ne 1$, \eqref{FG_bound_b} follows from \eqref{FG_bound}, \eqref{tildeg}, \eqref{fg_0}, and \eqref{def:c_2}.
			For the case when $\gamma= 1$, by \eqref{u-_1+holder}, we have $\|\nabla g_i\|_{C^{\beta}}\le C/k_i$ and the estimate for $\tilde{G}_2$ follows. It remains to prove the estimate for $\tilde{G}_1$ when $\gamma=1$. Note that by the triangle inequality, \eqref{fg_0}, \eqref{def:c_2}, and \eqref{g1+g2}, 
			$$
			\begin{aligned}
				&|G_2(y')-\tilde{G}_2(y')|\le |G_2(y')+\gamma^{-1} (\tilde g_1(y')+\tilde{g}_2(y'))|
				\\&\qquad+|-\gamma^{-1} (\tilde g_1(y')+\tilde{g}_2(y'))+\gamma^{-1} ({g}_1(y')-g_1(0')+{g}_2(y')-g_2(0'))|\\
				&\le C |y'|^2.
			\end{aligned}
			$$
			Therefore, the estimate for ${\tilde G_1}$ follows from \eqref{FG_bound} and the last inequality. The lemma is proved.
		\end{proof}
	}
	
	Now we are ready to prove Theorems \ref{thm_upperbound} and \ref{thm_upperbound_2}.
	\begin{proof} [Proof of Theorem \ref{thm_upperbound}]
		Without loss of generality, we assume that $\|\varphi\|_{L^\infty(\partial\Omega)}= 1$. Recall that $\mu = 1$. We will prove \eqref{grad_u_upperbound1}--\eqref{grad_u_upperbound2} with $\alpha$ given as in \eqref{alpha_new}. We perform the subtraction \eqref{subtr}, make the change of variables \eqref{x_to_y}, let $v(y) = \tilde{u}(x)$, and let $\bar{v}$ be the vertical average defined as in \eqref{v_bar_def}. { Then by Lemma \ref{lemma_reduction_2}, $\bar{v}$ satisfies \eqref{equation_v_bar_2} with $F$, $\tilde{G}_1$, $\tilde{G}_2$ satisfying \eqref{FG_bound_b}. Since $|\nabla(a+b\cdot x)|=|b|\le C$, it suffices to prove the gradient estimates {\eqref{grad_u_upperbound1}, \eqref{grad_u_upperbound3}, and \eqref{grad_u_upperbound2}} for $\tilde{u}$. For simplicity of notation, we still denote $\tilde u$ by $u$, $\tilde{g}_i$ by $g_i$, and $\tilde{G}_i$ by $G_i$.
			By Lemma \ref{lem:u_holder}, $g_i$ is Lipschitz and therefore
			\begin{equation}\label{gi_c_beta}
				[g_1]_{C^\beta_{x'}(\Gamma^+_{x_0,2r})} + [g_2]_{C^\beta_{x'}(\Gamma^-_{x_0,2r})}\le Cr^{1-\beta}.
			\end{equation}
		}
	{Following the proof of} Lemma \ref{lem:u_holder}, one can obtain the following estimates
		
		\begin{equation}\label{grad_v_bar_start_2}
			\|\nabla \bar{v}\|_{\varepsilon,0,s_0+1,B_{R_0}} \le C,
		\end{equation}
		where $s_0 = \frac{1}{2}$,
		\begin{equation}
			\label{grad_n_v_bound_2}
			|\partial_n v| \le C(\varepsilon+|y'|^2)/\varepsilon \quad \mbox{in}~~Q_{R_0,\varepsilon},
		\end{equation}
		where $Q_{R_0,\varepsilon}$ is defined as in \eqref{Q_s_t}, and thus
		\begin{equation}
			\label{v-v_bar_2}
			|v(y', y_n) - \bar{v}(y')| \le 2\varepsilon \max_{y_n\in (- \varepsilon , \varepsilon)} |\partial_n v(y',y_n)| \le C (\varepsilon + |y'|^2) \quad \mbox{in}~~Q_{R_0,\varepsilon}.
		\end{equation}
		On the other hand, $e^i$ in \eqref{a_ij_formula} can be bounded by $C|y'|^{2+\sigma}$. 
		
		Therefore, by Lemma \ref{lemma_reduction_2}, \eqref{grad_v_bar_start_2}, and \eqref{grad_n_v_bound_2}, $F,G_1,G_2$ satisfy
		\begin{equation}\label{F_G1}
			\|F\|_{\varepsilon, \sigma, s_0, B_{R_0}} + \|G_1\|_{\varepsilon, \sigma-1, s_0, B_{R_0}}\le C,
		\end{equation}
		and for $s>0$,
		\begin{equation}\label{G2-1}
			\|G_2\|_{\varepsilon, \sigma-1, s, B_{R_0}}\le C , \quad \mbox{when } \gamma\neq 1, \sigma\le 2s+\beta,
		\end{equation}
		
		\begin{equation}\label{G2-2}
			\|G_2\|_{\varepsilon, \sigma-1, s, B_{R_0}}\le C \Big( \frac{1}{k_1} + \frac{1}{k_2} \Big), \quad \mbox{when } \gamma=1, \sigma\le 2s.
		\end{equation}
		Let $G:=G_1+G_2$.
		
		Next, we consider two separate cases: $\gamma\ne 1$ and $\gamma=1$.
		
		\textit{Case 1:} When $\gamma\ne 1$,
		by decreasing $\sigma$ if necessary, we may assume that $\sigma\le \beta$ (so that \eqref{G2-1} is satisfied for any $s>0$), and $1+\sigma-2s_0=\sigma\ne\widehat\alpha$, where $\widehat{\alpha} = \min\{\alpha, 1\}$.
		Then by \eqref{F_G1} and \eqref{G2-1} with $s=s_0$, the estimate \eqref{v_bar_L2_sphere_old} in Proposition \ref{prop_grad_v_bar_control} implies that 
		\begin{equation}\label{v_bar_decay}
			\dashnorm{\bar{v} - (\bar{v})_{\partial B_{2\varepsilon^{1/2}}}}{L^2(\partial B_{2\varepsilon^{1/2}})}\le C \varepsilon^{\tilde{\alpha}/2},
		\end{equation}
		where $\tilde{\alpha} = \min\{\widehat\alpha , 1+ \sigma - 2s_0 \}$.
		
		Since $u$ is odd in $x_j$ for some $1 \le j \le n-1$, $\bar{v}$ is also odd in $x_j$. In particular, this implies
		$$
		(\bar{v})_{\partial B_R} = 0 \quad \forall R \in(0,R_0).
		$$
		Then by \eqref{v_bar_decay} and \eqref{v-v_bar_2}, we have
		\begin{equation}
			\begin{aligned}\label{v_L2_control}
				\dashnorm{u}{L^2(\Omega_{2\varepsilon^{1/2}})} \le& C \dashnorm{v}{L^2(Q_{2\varepsilon^{1/2} , \varepsilon})} \\
				\le& C( \dashnorm{v-\bar{v}}{L^2(Q_{2\varepsilon^{1/2} , \varepsilon})} + \dashnorm{\bar v}{L^2(Q_{2\varepsilon^{1/2} , \varepsilon})})\\
				\le& C \varepsilon^{\tilde\alpha/2}.
			\end{aligned}
		\end{equation}
		By \eqref{v_L2_control}, \eqref{gi_c_beta}, and \eqref{gradient-1/2a} with $g_1(0') = g_2(0') = 0$, we have
		$$
		\| \nabla u\|_{L^\infty (\Omega_{\varepsilon^{1/2}})} \le C\varepsilon^{\frac{\tilde\alpha-1}{2}}.
		$$
		Similarly, for any $R \in (\varepsilon^{1/2}, R_0/4)$,  the estimates \eqref{v_bar_L2_sphere_old} in Proposition \ref{prop_grad_v_bar_control} with $s=s_0$ and \eqref{v-v_bar} imply
		$$
		\dashnorm{u}{L^2(\Omega_{4R}\setminus \Omega_{R/2})} \le C R^{\tilde{\alpha}}, 
		$$
		which implies, by \eqref{gradient-1/2a} with $g_1(0') = g_2(0') = 0$ and \eqref{gi_c_beta},
		$$
		\| \nabla u\|_{L^\infty (\Omega_{4R}\setminus \Omega_{R/2})} \le CR^{\tilde\alpha-1}
		$$
		for any $R \in (\varepsilon^{1/2}, R_0/4)$. Therefore, we have improved the upper bound $|\nabla u(x)| \le C(\varepsilon + |x'|^2)^{-s_0}$ to $|\nabla u(x)| \le C(\varepsilon + |x'|^2)^{\frac{\tilde{\alpha}-1}{2}}$, where $\frac{\tilde{\alpha}-1}{2} = \min\left\{0,\frac{\alpha - 1}{2}, -s_0 + \frac{\sigma}{2} \right\}$. If $-s_0 + \frac{\sigma}{2} <\min\left\{0, \frac{\alpha - 1}{2}\right\}$, we take $s_1 = s_0 - \frac{\sigma}{2}$ and repeat the argument above. Note that the improved upper bound $|\nabla u(x)| \le C(\varepsilon + |x'|^2)^{-s_1}$ implies \eqref{F_G1} with $s_1$ in place of $s_0$. Let $s_k=s_0-k\frac{\sigma}{2}$ for $k\in \mathbb{Z}_+$. We may decrease $\sigma$ if necessary so that $1+\sigma -2s_k\ne \widehat\alpha$ for any $k\in \mathbb{Z}_+$.
		After {repeating the argument finitely many times}, we obtain the estimate \eqref{grad_u_upperbound1} or \eqref{grad_u_upperbound2}.
		
		\textit{Case 2:} When $\gamma= 1$, we know that $\widehat\alpha=\min\{\alpha,1\}=1$. By decreasing $\sigma$ if necessary, we may assume that $\sigma=1/(N+1)$ for some large integer $N$. Let $s_0=1/2$ and $s_k=s_0-k\frac{\sigma}{2}$ for $k=1,2,\dots, N+1$. Then clearly $1+\sigma-2s_k<1$ 
		for $k=0,1,\dots,N-1$ { and ${s_{N+1}=0}$}. Similar to case 1, by \eqref{G2-2} in place of \eqref{G2-1} with $s=s_k$, $k=0,1,\dots, N-1$, we can iterate the estimate \eqref{v_bar_L2_sphere_old} in Proposition \ref{prop_grad_v_bar_control} to improve the upper bound
		$|\nabla u(x)| \le C(\varepsilon + |x'|^2)^{-s_0}$ to $|\nabla u(x)| \le C(\varepsilon + |x'|^2)^{-s_N}$. From the improved upper bound $|\nabla u(x)| \le C(\varepsilon + |x'|^2)^{-s_N}$, one can obtain estimate \eqref{F_G1} with $s_N$ in place of $s_0$. Namely, we have
		\begin{equation}\label{F_G11}
			\|F\|_{\varepsilon, \sigma, s_N, B_{R_0}} + \|G_1\|_{\varepsilon, \sigma-1, s_N, B_{R_0}}\le C.
		\end{equation}
		
		We split $\bar v = v_1 + v_2$ such that $v_1$ satisfies
		\begin{equation*}
			\left\{
			\begin{aligned}
				\dv((\varepsilon+|y'|^2)\nabla  v_1) - 2\gamma^{-1} v_1 &= \dv F + G_1 \quad \mbox{in}~B_{R_0} \subset \bR^{n-1},\\
				v_1 &= \bar{v}  \quad \mbox{on}~ \partial B_{R_0},
			\end{aligned}\right.
		\end{equation*}
		while $v_2$ satisfies
		\begin{equation*}
			\left\{
			\begin{aligned}
				\dv((\varepsilon+|y'|^2)\nabla  v_2) - 2\gamma^{-1} v_2 &= G_2 \quad \mbox{in}~B_{R_0} \subset \bR^{n-1},\\
				v_2 &= 0  \quad \mbox{on}~ \partial B_{R_0}.
			\end{aligned}\right.
		\end{equation*}

		Note that $1+\sigma-2s_{N}=1$. Then 
		by \eqref{F_G11}, \eqref{G2-2} with $s=s_N$, and \eqref{v-v_bar}, the estimate \eqref{v_bar_L2_sphere} in Proposition \ref{prop_grad_v_bar_control} implies that for any $R \in (0, R_0/2)$,
		\begin{equation}\label{est:v1v2}
			\left\{
			\begin{aligned}
				\dashnorm{v_1 - (v_1)_{\partial B_R}}{L^2(\partial B_R)} &\le C R |\log R|,\\
				\dashnorm{v_2 - (v_2)_{\partial B_R}}{L^2(\partial B_R)} &\le C \Big( \frac{1}{k_1} + \frac{1}{k_2} \Big) R |\log R|.
			\end{aligned}\right.
		\end{equation}
		Then
		$$
		\dashnorm{{\bar v}}{L^2(\partial B_R)} \le \dashnorm{v_1 - (v_1)_{\partial B_R}}{L^2(\partial B_R)} + \dashnorm{v_2 - (v_2)_{\partial B_R}}{L^2(\partial B_R)} \le C R |\log R|.
		$$
		Therefore,
		\begin{equation*}
			\begin{aligned}
				\dashnorm{u}{L^2(\Omega_{2\varepsilon^{1/2}})} \le& C \dashnorm{v}{L^2(Q_{2\varepsilon^{1/2} , \varepsilon})} \\
				\le& C( \dashnorm{v-\bar{v}}{L^2(Q_{2\varepsilon^{1/2} , \varepsilon})} + \dashnorm{\bar v}{L^2(Q_{2\varepsilon^{1/2} , \varepsilon})})\\
				\le& C \varepsilon^{1/2} |\log \varepsilon|,
			\end{aligned}
		\end{equation*}
		and similarly
		$$
		\dashnorm{u}{L^2(\Omega_{4R}\setminus \Omega_{R/2})} \le C R|\log R|
		$$
		for any $R \in (\varepsilon^{1/2}, R_0/4)$. Then by \eqref{gradient-1/2a} with $g_1(0') = g_2(0') = 0$ and \eqref{gi_c_beta}, we have
		\begin{equation}\label{est:Du_temp}
			|\nabla u (x)| \le C |\log (\varepsilon + |x'|^2)|.
		\end{equation}
		
		Recall that our goal is the estimate \eqref{grad_u_upperbound3}, which has a more precise dependence on $k_1, k_2$ than that in \eqref{est:Du_temp}. This can be achieved by improving the estimate for $v_1$ in \eqref{est:v1v2}. In fact, due to \eqref{est:Du_temp}, we have
		$$
		\|F\|_{\varepsilon, \sigma, s, B_{R_0}} + \|G_1\|_{\varepsilon, \sigma-1, s, B_{R_0}}\le C
		$$
		for any $s > 0$. We choose $s = \frac{\sigma}{4}$ so that $1 + \sigma - 2s =  1 + \sigma/2 > 1$. Then we can apply the estimate \eqref{v_bar_L2_sphere_old} in Proposition \ref{prop_grad_v_bar_control} to obtain
		$$
		\dashnorm{v_1 - (v_1)_{\partial B_R}}{L^2(\partial B_R)} \le C R,
		$$
		which improves \eqref{est:v1v2} by removing the logarithmic factor. Then the estimate \eqref{grad_u_upperbound3} follows by repeating the same argument above.
	\end{proof}
	
	\begin{proof} [Proof of Theorem \ref{thm_upperbound_2}]
		Without loss of generality, we assume that $\|\varphi\|_{L^\infty(\partial\Omega)}= 1$. Recall that $\mu = 1$. We will prove \eqref{grad_u_upperbound1_zero} and \eqref{grad_u_upperbound2_zero} with $\alpha_0$ given as in \eqref{alpha_new_zero}. As in the proof of Theorem \ref{thm_upperbound}, we perform the subtraction \eqref{subtr}, make the change of variables \eqref{x_to_y}, and let $v(y) = \tilde{u}(x)$, and let $\bar{v}$ be the vertical average defined as in \eqref{v_bar_def}. { Then by Lemma \ref{lemma_reduction_2}, $\bar{v}$ satisfies \eqref{equation_v_bar_2} with $F$, $\tilde{G}_1$, $\tilde{G}_2$ satisfying \eqref{FG_bound_b}. Since $|\nabla(a+b\cdot x)|=|b|\le C$, it suffices to prove the gradient estimates {\eqref{grad_u_upperbound1}, \eqref{grad_u_upperbound3}, and \eqref{grad_u_upperbound2}} for $\tilde{u}$. For simplicity of notation, we still denote $\tilde u$ by $u$, $\tilde{g}_i$ by $g_i$, and $\tilde{G}_i$ by $G_i$.} Note that
		the estimates
		\eqref{gi_c_beta}
		--
		\eqref{v-v_bar_2} still hold.
		On the other hand, $e^i$ in \eqref{a_ij_formula} can be bounded by $C|y'|^{2+\sigma}$. 
		
		Therefore, by Lemma \ref{lemma_reduction_2}, \eqref{grad_v_bar_start_2}, and \eqref{grad_n_v_bound_2}, $F,G_1,G_2$ satisfy
		\begin{equation}\label{F_G1_zero}
			\|F\|_{\varepsilon, \sigma, s_0, B_{R_0}} + \|G_1\|_{\varepsilon, \sigma-1, s_0, B_{R_0}}\le C,
		\end{equation}
		and 
		\begin{equation}\label{G2-1_zero}
			\|G_2\|_{\varepsilon, \sigma-1, s, B_{R_0}}\le C, \quad \mbox{when } \gamma\neq 1, \sigma\le 2s+\beta,
		\end{equation}
		
		\begin{equation}\label{G2-2_zero}
			\|G_2\|_{\varepsilon, \sigma-1, s, B_{R_0}}\le C, \quad \mbox{when } \gamma=1, \sigma\le 2s.
		\end{equation}
		Let $G:=G_1+G_2$.
		
		Next, we consider two separate cases: $\gamma\ne 1$ and $\gamma=1$.
		
		\textit{Case 1:} When $\gamma\ne 1$,
		by decreasing $\sigma$ if necessary, we may assume that $\sigma\le \beta$ (so that \eqref{G2-1_zero} is satisfied for any $s>0$), and $1+\sigma-2s_0=\sigma\ne\widehat\alpha_0$, where $\widehat{\alpha}_0 = \min\{\alpha_0, 1\}$.
		Then by \eqref{F_G1_zero} and \eqref{G2-1_zero} with $s=s_0$, the estimate \eqref{v_bar_L2_sphere_new} in Proposition \ref{prop_grad_v_bar_control} implies that 
		\begin{equation}\label{v_bar_decay_new}
			\dashnorm{\bar{v} - \bar{v}(0)}{L^2(\partial B_{2\varepsilon^{1/2}})}\le C \varepsilon^{\tilde{\alpha}_0/2},
		\end{equation}
		where $\tilde{\alpha}_0 := \min\{\widehat\alpha_0 , 1+ \sigma - 2s_0 \}$.
		
		Then by \eqref{v_bar_decay_new}, \eqref{v-v_bar_2}, and the triangle inequality, we have
		\begin{equation}
			\begin{aligned}\label{v_L2_control_1_zero}
				\dashnorm{u-(u)_{\Omega_{2\varepsilon^{1/2}}}}{L^2(\Omega_{2\varepsilon^{1/2}})} \le &\dashnorm{u-\bar{v}(0)}{L^2(\Omega_{2\varepsilon^{1/2}})}  
				\\ \le &  C \dashnorm{v-\bar v(0)}{L^2(Q_{2\varepsilon^{1/2} , \varepsilon})} \\
				\le& C( \dashnorm{v-\bar{v}}{L^2(Q_{2\varepsilon^{1/2} , \varepsilon})} + \dashnorm{\bar v-\bar{v}(0)}{L^2(Q_{2\varepsilon^{1/2} , \varepsilon})})\\
				\le& C \varepsilon^{\tilde\alpha_0/2}.
			\end{aligned}
		\end{equation}
		By \eqref{v_L2_control_1_zero}, \eqref{gi_c_beta}, and \eqref{gradient-1/2} with $g_1(0') = g_2(0') = 0$, we have
		$$
		\| \nabla u\|_{L^\infty (\Omega_{\varepsilon^{1/2}})} \le C\varepsilon^{\frac{\tilde\alpha_0-1}{2}}.
		$$
		Similarly, for any $R \in (\varepsilon^{1/2}, R_0/4)$,  the estimates \eqref{v_bar_L2_sphere_new} in Proposition \ref{prop_grad_v_bar_control} with $s=s_0$ and \eqref{v-v_bar} imply
		$$
		\dashnorm{u-(u)_{\Omega_{4R}\setminus \Omega_{{R/4}}}}{L^2(\Omega_{4R}\setminus \Omega_{{R/4}})} \le C R^{\tilde\alpha_0}, 
		$$
		which implies, by \eqref{gradient-1/2} with $g_1(0') = g_2(0') = 0$ and \eqref{gi_c_beta},
		$$
		\| \nabla u\|_{L^\infty (\Omega_{{2R}}\setminus \Omega_{R/2})} \le CR^{\tilde\alpha_0-1}
		$$
		for any $R \in (\varepsilon^{1/2}, R_0/4)$. Therefore, we have improved the upper bound $|\nabla u(x)| \le C(\varepsilon + |x'|^2)^{-s_0}$ to $|\nabla u(x)| \le C(\varepsilon + |x'|^2)^{\frac{\tilde{\alpha}_0-1}{2}}$, where $\frac{\tilde{\alpha}_0-1}{2} = \min\left\{0,\frac{\alpha_0 - 1}{2}, -s_0 + \frac{\sigma}{2} \right\}$. If $-s_0 + \frac{\sigma}{2} <\min\left\{0, \frac{\alpha_0 - 1}{2}\right\}$, we take $s_1 = s_0 - \frac{\sigma}{2}$ and repeat the argument above. Note that the improved upper bound $|\nabla u(x)| \le C(\varepsilon + |x'|^2)^{-s_1}$ implies \eqref{F_G1_zero} with $s_1$ in place of $s_0$. Let $s_k=s_0-k\frac{\sigma}{2}$ for $k\in \mathbb{Z}_+$. We may decrease $\sigma$ if necessary so that $1+\sigma -2s_k\ne \widehat\alpha_0$ for any $k\in \mathbb{Z}_+$.
		After repeating the argument finitely many times, we obtain the estimate \eqref{grad_u_upperbound1_zero} or \eqref{grad_u_upperbound2_zero}.
		
		\textit{Case 2:} When $\gamma= 1$, {in contrast to Case 2 in the proof of Theorem \ref{thm_upperbound},} we know that $\widehat\alpha_0=\min\{\alpha_0,1\}=\alpha_0<1$. Let $s_k=s_0-k\frac{\sigma}{2}$ for $k\in \mathbb{Z}_+$. We may decrease $\sigma$ if necessary so that $\sigma< 1-\alpha_0$ and $1 -2s_k\ne \alpha_0$ for any $k\in \mathbb{Z}_+$. Note that we can repeat the argument as in Case 1 as long as $\sigma\le 2s_k$, so that \eqref{G2-2_zero} is satisfied. Let $j\in \mathbb{Z}_+$ such that $1 -2s_{j-1}< \alpha_0$ and $1 -2s_{j}> \alpha_0$. Then $\sigma<1-\alpha_0<2s_{j-1}\le 2s_{k}$ for $k\le j-1$ and one can repeat the argument as in Case 1, {using \eqref{G2-2_zero} instead of \eqref{G2-1_zero},} to obtain the upper bound $|\nabla u(x)| \le C(\varepsilon + |x'|^2)^\theta$,
		where $\theta:=\min\{\frac{\alpha_0-1}{2},-s_{j-1}+\frac{\sigma}{2}\}=\min\{\frac{\alpha_0-1}{2},-s_j\}=\frac{\alpha_0-1}{2}$.
		Therefore, the estimate {\eqref{grad_u_upperbound2_zero}} is proved.
	\end{proof}
	
	\section{\texorpdfstring{{Two-dimensional case}}{Two-dimensional case}}\label{sec:two-dimensional}
	In this section, we give an outline of the proof for Theorem \ref{thm_upperbound_3} by highlighting the differences in {the two-dimensional case} compared with the three- or {higher-dimensional cases} {at the end of} the last section. {We use the same notation} as in the last section.

	First, by noticing that $\alpha_0=\alpha$ when $n=2$, and $r^{{\widehat \alpha}}$ is a supersolution of \eqref{zero-mode} when $n=2$ {and $\widehat \alpha=\min\{\alpha,1\}$}, we have the following better estimate for the zero mode instead of {Lemma} \ref{lemma_v1_zero_mode}.
	\begin{lemma}
		\label{lemma_v1_zero_mode_2d}
		For $n {=} 2$, $\gamma > 0$, and $\varepsilon > 0$, let $v_1 \in H^1(B_{R_0})$ satisfy \eqref{equation_v1}.
		Then for any $0 < \rho < R < R_0$, we have
		\begin{equation*}
			|(v_1)_{\partial B_\rho}-v_1(0)| \le \Big(\frac{\rho}{R} \Big)^{{\widehat \alpha}} |(v_1)_{\partial B_R}-v_1(0)|,
		\end{equation*}
		where $\alpha$ is given in \eqref{alpha_new}.
	\end{lemma}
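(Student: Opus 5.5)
The plan is to repeat the barrier argument of Lemma \ref{lemma_v1_zero_mode}, replacing the barrier $(\varepsilon+r^2)^{\alpha_0/2}$ by the pure power $w(r):=r^{\widehat\alpha}$. When $n=2$ this power is still a supersolution of the zero-mode equation \eqref{zero-mode}. As in the proof of Lemma \ref{lemma_v1_zero_mode}, we set $V_0(r):=(v_1)_{\partial B_r}$ and reduce to the case $V_0(0)=v_1(0)\ge 0$. The spherical harmonic decomposition then shows that $V_0$ solves \eqref{zero-mode}, which for $n=2$ reads
\[
L_0V_0=V_0''+\frac{2r}{\varepsilon+r^2}V_0'-\frac{2}{\gamma(\varepsilon+r^2)}V_0=0 .
\]
Applying the weak maximum principle to the radial solution $v_0(y)=V_0(|y|)$ in $B_\rho$ gives the lower bound $V_0(\rho)-V_0(0)\ge 0$, exactly as in \eqref{V0-lower}.

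\textbf{Step 1: $w$ is a supersolution.} Recall that for $n=2$ the exponent $\alpha$ satisfies $\alpha^2+\alpha-2/\gamma=0$. A direct computation gives
\[
L_0 w=\frac{r^{\widehat\alpha-2}}{\varepsilon+r^2}\Big[\widehat\alpha(\widehat\alpha-1)\,\varepsilon+\big(\widehat\alpha^2+\widehat\alpha-2\gamma^{-1}\big)r^2\Big].
\]
We check the sign of the bracket in the two cases.
\begin{itemize}
\item If $\alpha\le 1$, then $\widehat\alpha=\alpha$. The $r^2$-coefficient vanishes and $\widehat\alpha(\widehat\alpha-1)\varepsilon\le 0$.
\item If $\alpha>1$, then $\widehat\alpha=1$. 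The $\varepsilon$-term vanishes, and the $r^2$-coefficient is $2-2/\gamma$. Since $\alpha>1$ is equivalent to $2/\gamma>2$, this coefficient is negative.
\end{itemize}
In both cases $L_0w\le 0$ on $(0,R_0)$.

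\textbf{Step 2: barrier and comparison.} Define
\[
W(r):=V_0(0)+B\,r^{\widehat\alpha},\qquad B:=\frac{V_0(R)-V_0(0)}{R^{\widehat\alpha}}\ge 0 .
\]
Using $V_0(0)\ge 0$ and $B\ge 0$, we get
\[
L_0W=-\frac{2V_0(0)}{\gamma(\varepsilon+r^2)}+B\,L_0w\le 0=L_0V_0 .
\]
The difference $z:=V_0-W$ then satisfies $L_0z\ge 0$ on $(0,R)$, with $z(R)=0$, and $z(r)\to 0$ as $r\to 0^+$ because $V_0$ is continuous and $r^{\widehat\alpha}\to 0$.

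\textbf{Step 3: conclusion.} The step that needs care is the comparison near the singular endpoint $r=0$. Instead of checking $W(\delta)\ge V_0(\delta)$ for small $\delta$, we argue directly. Suppose $z$ had a positive interior maximum at some $r_*\in(0,R)$. There $z'(r_*)=0$ and $z''(r_*)\le 0$, so
\[
L_0z(r_*)\le -\frac{2z(r_*)}{\gamma(\varepsilon+r_*^2)}<0,
\]
which contradicts $L_0z\ge0$. Since $z$ vanishes at $r=R$ and tends to $0$ as $r\to 0^+$, it follows that $z\le 0$ on $(0,R)$. Hence
\[
0\le V_0(\rho)-V_0(0)\le (\rho/R)^{\widehat\alpha}\big(V_0(R)-V_0(0)\big)\qquad\text{for all }\rho\in(0,R),
\]
which is the claimed estimate.
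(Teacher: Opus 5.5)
Your proposal is correct and follows the paper's route: the paper also takes $r^{\widehat\alpha}$ as a supersolution of the $n=2$ zero-mode equation \eqref{zero-mode} and runs the barrier and comparison argument of Lemma \ref{lemma_v1_zero_mode}. Your treatment of the endpoint $r=0$, using $z(0^+)=0$ together with an interior-maximum contradiction, is the right way to close the comparison here. With a pure power barrier $W(0^+)=V_0(0)$, so the paper's ``$W(\delta)\ge V_0(\delta)$ for small $\delta$'' step is not automatic.
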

	
	Then a similar iteration argument (as in the proof of Proposition \ref{prop_grad_v_bar_control}) using Lemmas \ref{lemma_v1}, \ref{lemma_v2}, and \ref{lemma_v1_zero_mode_2d} (instead of \ref{lemma_v1_zero_mode}) yields 
	\begin{proposition}
		\label{prop_grad_v_bar_control_2d}
		For $n = 2$, $0 \le s \le 1/2$, $\gamma > 0$, $\sigma> 0$, and $\varepsilon > 0$,
		let $\bar{v} \in H^1(B_{R_0})$ be a solution of
		\begin{equation*}
			\dv((\varepsilon+|y'|^2)\nabla \bar v) -2\gamma^{-1}\bar{v} = \dv F + G \quad\text{in}\,\,B_{R_0}\subset \bR^{n-1},
		\end{equation*}
		where $F,G$ satisfy
		\begin{equation*}
			\|F\|_{\varepsilon, \sigma,s,B_{R_0}}< \infty , \quad \|G\|_{\varepsilon, \sigma-1,s,B_{R_0}} < \infty.
		\end{equation*}
		Let $\alpha$ be the constant given in \eqref{alpha_new}, and $\tilde\alpha := \min \{ \alpha, 1, 1 + \sigma - 2s \}$. There exists a positive constant $C$ depending only on $n$, $\sigma$, $s$, $R_0$, but independent of $\varepsilon$, such that for any $R \in (0, R_0/2)$, we have:
		\begin{itemize}
			\item[(i)] If $1+\sigma - 2s \neq \min\{\alpha,1\}$, then
			\begin{equation*}
				\begin{aligned}
					&\dashnorm{\bar{v} - \bar{v}(0)}{L^2(\partial B_R)}\\
					&\le C(\|F\|_{\varepsilon, \sigma,s,B_{R_0}} + \|G\|_{\varepsilon, \sigma-1,s,B_{R_0}} +  \dashnorm{\bar v - {\bar{v}(0)}}{L^2(\partial B_{R_0})})  R^{\tilde\alpha}.
				\end{aligned}
			\end{equation*}
			\item[(ii)] If $1+\sigma - 2s = \min\{\alpha,1\}$, then
			\begin{equation*}
				\begin{aligned}
					&\dashnorm{\bar{v} - \bar{v}(0)}{L^2(\partial B_R)}\\
					&\le C(\|F\|_{\varepsilon, \sigma,s,B_{R_0}} + \|G\|_{\varepsilon, \sigma-1,s,B_{R_0}} +  \dashnorm{\bar v -  {\bar{v}(0)}}{L^2(\partial B_{R_0})})  R^{\tilde\alpha}\log(R_0/R).
				\end{aligned}
			\end{equation*}
		\end{itemize}
	\end{proposition}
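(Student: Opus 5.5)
The proof follows the argument for Proposition \ref{prop_grad_v_bar_control}, cases (ii) and (iii), with the radial barrier replaced by the sharper two-dimensional zero-mode bound of Lemma \ref{lemma_v1_zero_mode_2d}.

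\textbf{Splitting.} Set $M:=\|F\|_{\varepsilon,\sigma,s,B_{R_0}}+\|G\|_{\varepsilon,\sigma-1,s,B_{R_0}}$ and $\omega_0(\rho):=\dashnorm{\bar v-\bar v(0)}{L^2(\partial B_\rho)}$. For $0<\rho\le R/2$, split $\bar v=v_1+v_2$ in $B_R$: $v_2\in H^1_0(B_R)$ solves the inhomogeneous equation, and $v_1$ solves \eqref{equation_v1} with $v_1=\bar v$ on $\partial B_R$. Rescaling and Lemma \ref{lemma_v2} give, as in \eqref{v2_control},
\[
\|v_2\|_{L^\infty(B_R)}\le CMR^{1+\sigma-2s}.
\]

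\textbf{Decay of $v_1$.} Decompose $v_1-v_1(0)$ on $\partial B_\rho$ into its mean $(v_1)_{\partial B_\rho}-v_1(0)$ and the oscillatory part $v_1-(v_1)_{\partial B_\rho}$; these are $L^2(\partial B_\rho)$-orthogonal. Lemma \ref{lemma_v1} bounds the oscillatory part by $(\rho/R)^{\widehat\alpha}$ times its value on $\partial B_R$. Lemma \ref{lemma_v1_zero_mode_2d} bounds the mean part by the same factor $(\rho/R)^{\widehat\alpha}$. Summing the squares and using orthogonality on both spheres gives
\[
\dashnorm{v_1-v_1(0)}{L^2(\partial B_\rho)}\le(\rho/R)^{\widehat\alpha}\,\dashnorm{v_1-v_1(0)}{L^2(\partial B_R)}.
\]
Since $v_1=\bar v$ on $\partial B_R$, and $|v_2|\le\|v_2\|_{L^\infty}$ including at $0$, this yields
\[
\omega_0(\rho)\le(\rho/R)^{\widehat\alpha}\,\omega_0(R)+2\|v_2\|_{L^\infty(B_R)}+(\rho/R)^{\widehat\alpha}\cdot 2\|v_2\|_{L^\infty(B_R)}.
\]
Here the last term accounts for replacing $v_1(0)$ by $\bar v(0)$ on $\partial B_R$. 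Hence
\[
\omega_0(\rho)\le(\rho/R)^{\widehat\alpha}\,\omega_0(R)+CMR^{1+\sigma-2s}.
\]
This is exactly the form of \eqref{omega_iteration}, but with no $\varepsilon$ in the decay factor, unlike \eqref{omega_iteration_zero}.

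\textbf{Dyadic iteration.} Take $\rho=2^{-i-1}R_0$, $R=2^{-i}R_0$ and iterate for $i=0,\dots,k-1$:
\[
\omega_0(2^{-k}R_0)\le 2^{-k\widehat\alpha}\,\omega_0(R_0)+CM\sum_{i=1}^k 2^{-(k-i)\widehat\alpha}\,2^{-(i-1)(1+\sigma-2s)}.
\]
If $1+\sigma-2s\ne\widehat\alpha$, the geometric sum is bounded by $C2^{-k\tilde\alpha}$, with a constant depending on the gap $|\widehat\alpha-(1+\sigma-2s)|$. If $1+\sigma-2s=\widehat\alpha$, every term equals $2^{-k\widehat\alpha}$ up to a constant, so the sum is $Ck2^{-k\widehat\alpha}$. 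For general $\rho\in(0,R_0/2)$, choose $k$ with $2^{-k-1}R_0<\rho\le 2^{-k}R_0$ and apply the one-step inequality once more. This gives (i) and (ii) for all $R\in(0,R_0/2)$.

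No separate treatment of $\rho<\sqrt\varepsilon$ is needed, because Lemmas \ref{lemma_v1} and \ref{lemma_v1_zero_mode_2d} hold uniformly in $\varepsilon$.

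\textbf{Main obstacle.} The only delicate step is the orthogonal combination of the two lemmas, in particular checking that the reference value $v_1(0)$ versus $\bar v(0)$ costs only $\|v_2\|_\infty$. Lemma \ref{lemma_v1_zero_mode_2d} itself is taken as given. If one wanted to verify it, the point is that $r^{\widehat\alpha}$ satisfies $L_0r^{\widehat\alpha}\le0$ when $n=2$: by the computation preceding Lemma \ref{lemma_v1_zero_mode}, this reduces to $\widehat\alpha^2-2/\gamma\le0$, which holds since $\alpha^2=2/\gamma$ for $n=2$. The comparison argument of Lemma \ref{lemma_v1_zero_mode} then applies with the barrier $W=V_0(0)+B\rho^{\widehat\alpha}$.
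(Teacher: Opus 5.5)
Your proposal is correct and follows the argument the paper intends: split $\bar v=v_1+v_2$, control $v_2$ by Lemma \ref{lemma_v2}, and combine Lemma \ref{lemma_v1} orthogonally with Lemma \ref{lemma_v1_zero_mode_2d} to get an $\varepsilon$-free decay factor $(\rho/R)^{\widehat\alpha}$, then run the dyadic iteration of Proposition \ref{prop_grad_v_bar_control}; your explicit accounting of $v_1(0)$ versus $\bar v(0)$ via $|v_2(0)|\le\|v_2\|_{L^\infty}$ is in fact more careful than the paper's \eqref{omega_iteration_zero}. One slip in your optional side remark: for $n=2$ the identity is $\alpha^2+\alpha=2/\gamma$, not $\alpha^2=2/\gamma$, and the correct computation is $(\varepsilon+r^2)r^{2-a}L_0r^a=a(a-1)\varepsilon+(a^2+a-2/\gamma)r^2$, which is $\le0$ for $a=\widehat\alpha$ precisely because $\widehat\alpha\le1$ and $\widehat\alpha^2+\widehat\alpha\le2/\gamma$.
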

	
	With Proposition \ref{prop_grad_v_bar_control_2d} in place of \ref{prop_grad_v_bar_control}, one can use a similar bootstrap argument (as in the proof of Theorem \ref{thm_upperbound}) to prove Theorem \ref{thm_upperbound_3}. We leave the details to the interested reader.

	\section{Optimality}\label{sec:optimality}
	In this section, we prove Theorem \ref{main_thm}. Without loss of generality, we assume $\mu = 1$ again throughout this section. The following ordinary differential operator
	\begin{equation}\label{def_L}
		Lh :=h''(r) + \left( \frac{n-2}{r} + \frac{2r}{\varepsilon + r^2} \right) h'(r) - \left( \frac{n-2}{r^2} + \frac{2}{\gamma} \frac{1}{\varepsilon+r^2} \right)  h(r)
	\end{equation}
	will play an important role in our proof. First, we recall the following lemma in \cite{DongYangZhu26}.
	\begin{lemma}[\cite{DongYangZhu26}*{Lemma 7.1}]
		\label{ODE_lemma}
		For $\varepsilon > 0$, $\gamma > 1$, $n \ge 2$, let $L$ be the operator defined as in \eqref{def_L}. There exists a unique solution $h \in C([0,1/2]) \cap C^\infty((0,1/2])$ of
		\begin{equation}
			\label{Homogeneous_ODE}
			Lh = 0, \quad 0 < r <1/2
		\end{equation}
		satisfying  $h(0) = 0$ and $h(1/2) = 1/2$.
		Moreover, there exists a positive constant $C(\varepsilon)$ depending only on $n$ and $\varepsilon$, such that
		\begin{equation}
			\label{h_bounds}
			r  < h(r) < \min\{ C(\varepsilon)r, r^\alpha\} \quad \mbox{for}~~0 < r < 1/2,
		\end{equation}
		where $\alpha$ is given by \eqref{alpha_new} and $h$ is strictly increasing in $[0,1/2]$.
	\end{lemma}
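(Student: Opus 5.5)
We first record the sign structure of the zeroth-order term. Since $\gamma>1$, \eqref{alpha_new} gives $\alpha<1$, and $\alpha$ solves $\alpha^2+(n-1)\alpha-(n-2)-2/\gamma=0$. Existence and uniqueness of $h$ follow from a shooting argument on the regular-singular ODE. Near $r=0$, the indicial equation of $L$ is $\lambda^2+(n-1)\lambda-(n-2)-2/(\gamma\varepsilon)\cdot 0=0$ at leading order in $1/r^2$. Since $\varepsilon>0$, the term $\frac{2}{\gamma(\varepsilon+r^2)}$ is bounded near $0$, so the singular part is $h''+\frac{n-2}{r}h'-\frac{n-2}{r^2}h$. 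Its indicial roots are $1$ and $-(n-2)$ (for $n=2$, the roots are $0$ and $0$, giving $1$ and $\log r$; in that case $\frac{n-2}{r^2}$ vanishes and the roots for $n=2$ come from $h''+\frac{2r}{\varepsilon+r^2}h'-\frac{2}{\gamma(\varepsilon+r^2)}h$, which is nonsingular). In all cases there is a one-parameter family of solutions continuous at $0$ with $h(0)=0$, namely $h_\ast\sim r$ for $n\ge3$ (respectively $h_\ast(0)=0$, $h_\ast'(0)=1$ for $n=2$). We then normalize to get $h(1/2)=1/2$. This requires $h_\ast(1/2)\neq0$, which will follow from the positivity argument below. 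Uniqueness holds because any other solution continuous at $0$ with value $0$ is a multiple of $h_\ast$.

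For the bounds \eqref{h_bounds}, we use comparison via the maximum principle for $L$. Its zeroth-order coefficient is negative, so a nonnegative maximum of a subsolution cannot be interior. We compute $L(r)=\frac{2r}{\varepsilon+r^2}-\frac{2r}{\gamma(\varepsilon+r^2)}=\frac{2r(1-1/\gamma)}{\varepsilon+r^2}>0$, so $r$ is a strict subsolution. Next, $L(r^\alpha)=r^{\alpha-2}\bigl[\alpha(\alpha-1)+(n-2)\alpha-(n-2)\bigr]+\frac{r^{\alpha}}{\varepsilon+r^2}(2\alpha-2/\gamma)$. Using the $\alpha$-equation, the bracket equals $2/\gamma-2\alpha$, which gives $L(r^\alpha)=(2/\gamma-2\alpha)r^{\alpha-2}\frac{\varepsilon}{\varepsilon+r^2}$. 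We expect this to be $\ge0$ as well; indeed, since $\alpha<1$, $\alpha^2+(n-1)\alpha=n-2+2/\gamma$ forces $2/\gamma-2\alpha=\alpha^2+(n-3)\alpha-(n-2)=(\alpha-1)(\alpha+n-2)<0$. Hence $r^\alpha$ is a strict supersolution.

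Both $r$ and $r^\alpha$ vanish at $0$ and exceed or equal $1/2$ appropriately at $1/2$: we have $r=1/2$ and $r^\alpha>1/2$ at $r=1/2$. Comparison on $(\delta,1/2)$, letting $\delta\to0$ and using that all functions tend to $0$ while the difference quotients behave like $r$ near $0$, then yields $r\le h\le r^\alpha$. Strictness follows from the strong maximum principle. The main technical point here is controlling the endpoint $r=0$ in the comparison, where $L$ is singular. We handle this using the local expansion $h=c\,r(1+o(1))$ and the fact that $r^\alpha/r\to\infty$, so $h<r^\alpha$ near $0$, together with $h-r$ having the sign of $c-1$ near $0$ combined with the interior maximum principle. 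The bound $h\le C(\varepsilon)r$ comes from the expansion $h\sim c(\varepsilon)r$ near $0$ together with continuity on $[r_0,1/2]$.

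For monotonicity, $h>0$ on $(0,1/2]$. If $h'$ vanished at an interior point $r_1$ with $h''(r_1)\le0$ (a local maximum), the equation would give $h''(r_1)=(\frac{n-2}{r_1^2}+\frac{2}{\gamma(\varepsilon+r_1^2)})h(r_1)>0$, a contradiction. Since $h'>0$ near $0$, $h$ is strictly increasing.
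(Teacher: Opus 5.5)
Apart from the dependence of the constant $C(\varepsilon)$ (last paragraph), your proof is correct, though several statements need cleaning up.

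\textbf{The comparison bounds.} These are the natural route and match the sub/supersolution computations the paper itself uses in Lemma~\ref{lemma_subsolution}. You have $L(r)=\frac{2r(1-\gamma^{-1})}{\varepsilon+r^2}>0$ and $L(r^\alpha)=-2\varepsilon(\alpha-\gamma^{-1})\frac{r^{\alpha-2}}{\varepsilon+r^2}<0$. Your sentence ``we expect this to be $\ge0$'' should say $<0$, which is what you then prove.

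The endpoint $r=0$ is not a technical issue at all. The functions $h$, $r$, $r^\alpha$ all lie in $C([0,1/2])$ and vanish at $0$. So if $h-r$ or $r^\alpha-h$ fails to be positive in $(0,1/2)$, it has a nonpositive interior minimum. At that point the ODE is regular, and the strict inequalities above give a contradiction. No $\delta\to0$ limit or expansion of $h$ is needed.

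The same argument shows $h_*>0$ on $(0,1/2]$ directly: a first zero $r_1$ of $h_*$ would produce a positive interior maximum on $(0,r_1)$. This removes the circularity in your normalization.

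\textbf{Existence and uniqueness.} Frobenius theory works once corrected. The indicial polynomial is $\lambda^2+(n-3)\lambda-(n-2)=(\lambda-1)(\lambda+n-2)$, not the one you display. For $n=2$ the equation is simply regular at $0$; your remark about a double root $0$ is wrong, though you do not use it.

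The paper, via the cited lemma, gets uniqueness by reduction of order instead, as it indicates in Section~\ref{sec:optimality}. With a positive solution $h$ in hand, the second solution $h(r)\int_r^{1/2}\frac{ds}{h^2(s)s^{n-2}(\varepsilon+s^2)}$ behaves like $r^{2-n}$ for $n\ge3$ and tends to a nonzero constant for $n=2$. Either way it is excluded by $h(0)=0$. That route avoids series and is the same device the paper later uses to build $p_1,p_2$. Your Frobenius route instead gives the local behavior $h\sim c\,r$ explicitly.

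\textbf{The constant $C(\varepsilon)$.} This is the one place where your argument does not prove the statement as written. You obtain the constant in $h<C(\varepsilon)r$ from $\lim_{r\to0}h(r)/r$ and continuity, so it depends on the particular solution, hence on $\gamma$ as well. The lemma asserts dependence on $n$ and $\varepsilon$ only.

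One more $\gamma$-independent barrier repairs this. Set $W(r):=\sqrt{\varepsilon+\tfrac14}\;r(\varepsilon+r^2)^{-1/2}$. Then $W(0)=0$, $W(1/2)=1/2$, and
\[
LW=\sqrt{\varepsilon+\tfrac14}\,\Big[\frac{r\big((1-n)\varepsilon-(n-2)r^2\big)}{(\varepsilon+r^2)^{5/2}}-\frac{2r}{\gamma(\varepsilon+r^2)^{3/2}}\Big]<0\quad\text{in }(0,1/2).
\]
The same comparison then gives $h<W\le\sqrt{1+1/(4\varepsilon)}\,r$, with a constant depending on $\varepsilon$ only.

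Your monotonicity argument is fine as written.
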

	Next, we establish better estimates of $h$ using a rescaling argument and the maximum principle.
	\begin{lemma}\label{lemma_subsolution}
		For $\varepsilon > 0$, $\gamma > 1$, $n \ge 2$, let $h \in C([0,1/2]) \cap C^\infty((0,1/2])$ be the solution to \eqref{Homogeneous_ODE} and $\alpha$ be given by \eqref{alpha_new}. Then
		\begin{equation}\label{h_lowerbound}
			h(r) > {2^{\alpha-1}}\Big(r^\alpha - ({2}\sqrt\varepsilon)^{\alpha} \Big)_+, \quad r \in(0,1/2).
		\end{equation} Moreover, for some positive constants $C_1$, $C_2$ depending only on $n$, 
		\begin{equation}\label{h_upper_lower}
			C_1\varepsilon^{\frac{\alpha-1}{2}}r\le h(r)\le C_2\varepsilon^{\frac{\alpha-1}{2}}r, \quad r\in(0,{\min
            \{1/2,{4}\sqrt{\varepsilon}\}}).
		\end{equation}
	\end{lemma}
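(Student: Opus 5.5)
We prove both estimates with the maximum principle for the operator $L$ in \eqref{def_L}. Since $-(n-2)/r^2-2/(\gamma(\varepsilon+r^2))<0$, the comparison principle holds on subintervals $(a,b)\subset(0,1/2]$, and at $r=0$ the condition $h(0)=0$ takes the place of a boundary value. Throughout we use that $\alpha$ solves $\alpha^2+(n-1)\alpha-(n-2)-2/\gamma=0$.

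\emph{Lower bound \eqref{h_lowerbound}.} We construct an explicit subsolution that vanishes for $r\le 2\sqrt\varepsilon$. Set $w(r)=r^\alpha$ and compute
\[
Lw=\Big(\alpha^2+(n-1)\alpha-(n-2)-\tfrac{2}{\gamma}\Big)r^{\alpha-2}+\Big(\tfrac{2r}{\varepsilon+r^2}-\tfrac{2}{r}\Big)\alpha r^{\alpha-1}-\tfrac{2}{\gamma}\Big(\tfrac{1}{\varepsilon+r^2}-\tfrac1{r^2}\Big)r^\alpha .
\]
The first bracket vanishes, and the remaining terms equal $\frac{2\varepsilon}{\varepsilon+r^2}\big(-\alpha+\gamma^{-1}\big)r^{\alpha-2}$. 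This has an indefinite sign, so $w$ itself is not a subsolution. Instead we take $W=c\,(r^\alpha-(2\sqrt\varepsilon)^\alpha)$ on $(2\sqrt\varepsilon,1/2)$. The subtracted constant $-c(2\sqrt\varepsilon)^\alpha$ contributes $+c(2\sqrt\varepsilon)^\alpha\big((n-2)/r^2+2/(\gamma(\varepsilon+r^2))\big)\ge0$ to $LW$. We check that this positive contribution dominates the error $c\frac{2\varepsilon}{\varepsilon+r^2}|\gamma^{-1}-\alpha|r^{\alpha-2}$ whenever $r\ge2\sqrt\varepsilon$. Indeed $\varepsilon/(\varepsilon+r^2)\lesssim\varepsilon r^{-2}$, so the error is at most $C\varepsilon r^{\alpha-4}$, while the gain is of order $\varepsilon^{\alpha/2}r^{-2}$. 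Since $\varepsilon r^{\alpha-2}\le \varepsilon^{\alpha/2}4^{\alpha/2-1}$ for $r\ge 2\sqrt\varepsilon$, the sign works out, possibly after adjusting the constant $2$. If the sign fails, we fall back on a rescaling $r=\sqrt\varepsilon s$, which turns $L$ into an $\varepsilon$-free operator on $s\ge2$, and compare there. Once $LW\ge0$ is established, choose $c=2^{\alpha-1}$. Then $W(1/2)\le 2^{\alpha-1}2^{-\alpha}=1/2=h(1/2)$ and $W(2\sqrt\varepsilon)=0<h$, so the comparison principle gives $h\ge W$. Strict inequality follows from the strong maximum principle, and on $r\le2\sqrt\varepsilon$ the bound is trivial since $h>0$.

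\emph{Estimate \eqref{h_upper_lower}.} Rescale $r=\sqrt\varepsilon s$ and set $H(s)=h(\sqrt\varepsilon s)$. Then $\tilde L H=0$, where
\[
\tilde L H=H''+\Big(\tfrac{n-2}{s}+\tfrac{2s}{1+s^2}\Big)H'-\Big(\tfrac{n-2}{s^2}+\tfrac{2}{\gamma(1+s^2)}\Big)H
\]
is independent of $\varepsilon$, and $H(0)=0$. Let $\phi$ be the solution of $\tilde L\phi=0$ on $(0,\infty)$ with $\phi(s)\sim s$ near $0$; it is the regular solution at the singular point $0$, with indicial roots $1$ and $-(n-2)$. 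By uniqueness, $H=\lambda\phi$ on $[0,4]$ for some $\lambda>0$ (when $4\sqrt\varepsilon>1/2$ we work on $[0,1/(2\sqrt\varepsilon)]$ and $\varepsilon\gtrsim1$, where everything is harmless). Since $\phi$ is smooth, positive, and comparable to $s$ on $(0,4]$ with constants depending only on $n$ and $\gamma$, we get $h(r)\sim\lambda r/\sqrt\varepsilon$ for $r<4\sqrt\varepsilon$. It remains to show $\lambda\sim\varepsilon^{\alpha/2}$, i.e.\ $h(4\sqrt\varepsilon)\sim\varepsilon^{\alpha/2}$. The upper bound is $h(r)<r^\alpha$ from \eqref{h_bounds}. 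The lower bound is \eqref{h_lowerbound} at $r=4\sqrt\varepsilon$, which gives $h\ge 2^{\alpha-1}(4^\alpha-2^\alpha)\varepsilon^{\alpha/2}$. Hence $\lambda/\sqrt\varepsilon\sim\varepsilon^{(\alpha-1)/2}$. The constants depend on $\gamma$ through $\alpha$, which is presumably absorbed in the statement.

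The main obstacle is the sign verification $LW\ge0$ in the lower bound, where the exact threshold $2\sqrt\varepsilon$ has to be balanced against the error term $\frac{2\varepsilon}{\varepsilon+r^2}(\gamma^{-1}-\alpha)r^{\alpha-2}$. I would first try a direct computation, then the rescaled version. If needed, I would also add a small multiple of $r$ (which is a subsolution-type correction since $h>r$) to fix the sign.
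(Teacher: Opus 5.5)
\textbf{Part 1: the lower bound \eqref{h_lowerbound}.} Your argument is the paper's: the same subsolution $(r^\alpha-(2\sqrt\varepsilon)^\alpha)_+$ and the same factor $2^{\alpha-1}$, fixed by $h(1/2)=1/2$. Your formula $L r^\alpha=\frac{2\varepsilon}{\varepsilon+r^2}(\gamma^{-1}-\alpha)r^{\alpha-2}$ is correct.

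The sign check you flag as the main obstacle is left unfinished, and it depends on constants, not just orders of magnitude. Using $r^{\alpha-2}\le(2\sqrt\varepsilon)^{\alpha-2}$ and $(n-2)/r^2\ge(n-2)/(\varepsilon+r^2)$, the check reduces to $\alpha<2(n-2)+5\gamma^{-1}$.
\begin{itemize}
\item For $n\ge3$ this is immediate from $\alpha<1$.
\item For $n=2$ the gain comes only from the term $2/(\gamma(\varepsilon+r^2))$, so both sides are $O(\gamma^{-1})$ as $\gamma\to\infty$. You need the extra input $\alpha=\frac{8/\gamma}{2(1+\sqrt{1+8/\gamma})}\le 2/\gamma$, which is exactly how the paper closes this case.
\end{itemize}
Your fallback of adjusting the constant $2$ is not available, since that constant is part of the statement and changing it weakens the inequality. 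Fortunately it is unnecessary.

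\textbf{Part 2: the two-sided bound \eqref{h_upper_lower}.} Here you take a genuinely different and valid route.

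The paper uses barriers in the rescaled variable $s=r/\sqrt\varepsilon$. The function $s$ is a subsolution because $\hat L s=\frac{2s}{1+s^2}(1-\gamma^{-1})>0$, which gives $\hat h(s)\ge \hat h(4)s/4$ on $(0,4)$. The function $s-s^{3/2}/2$ is a supersolution on $(0,s_0(n))$; combined with $\hat h\le s^\alpha\le s_0^{\alpha-1}s$ on $[s_0,4)$, it gives the upper bound.

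You instead observe that solutions of the rescaled ODE that are continuous and vanish at $0$ form a one-dimensional space spanned by a fixed profile $\phi$. Hence $h(r)=\lambda\phi(r/\sqrt\varepsilon)$ on the inner region, and both bounds reduce to the normalization $h(4\sqrt\varepsilon)\sim\varepsilon^{\alpha/2}$. Both proofs take that normalization from \eqref{h_bounds} and \eqref{h_lowerbound}.

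What your version buys and costs:
\begin{itemize}
\item It is shorter and explains structurally why $h$ is linear at scale $\sqrt\varepsilon$.
\item It relies on Frobenius theory at $s=0$. For $n\ge3$ the indicial roots $1$ and $2-n$ differ by an integer, so the second solution may carry a logarithm, though it is still unbounded. For $n=2$ the origin is a regular point.
\item The comparability of $\phi$ with $s$ comes from compactness, so your constants depend on $\gamma$ in an unquantified way. The paper's explicit supersolution gives $C_2$ depending only on $n$, uniformly in $\gamma>1$.
\item The lower constant depends on $\alpha$ in both approaches, so your remark about $\gamma$-dependence is fair.
\end{itemize}
Finally, you assert positivity of $\phi$ on $(0,4]$ without proof. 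It follows from $H=\lambda\phi$ together with $h>r$, or directly from the maximum principle.
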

	\begin{proof}
		Denote $v(r) = \Big(r^\alpha - (c\sqrt\varepsilon)^{\alpha} \Big)_+$ for some positive constant $c$ to be determined. When $r > c\sqrt\varepsilon$, a direct computation gives
\begin{align*}
 Lv =& - 2 \varepsilon \left( \alpha - \frac{1}{\gamma} \right) \frac{ r^{\alpha - 2}}{\varepsilon+r^2} + \left( \frac{n-2}{r^2} + \frac{2}{\gamma} \frac{1}{\varepsilon+r^2} \right) (c\sqrt\varepsilon)^{\alpha} \\
 \ge & \frac{c^{\alpha-2}\varepsilon^{\frac{\alpha}{2}}}{\varepsilon + r^2} \left( -2\alpha + \frac{2}{\gamma} + c^2\left( n-2 + \frac{2}{\gamma} \right) \right).
\end{align*}
When $n \ge 3$, it is clear that one can choose $c = 2$ such that $Lv > 0$. When $n = 2$,
$$
\alpha = \frac{-1 + \sqrt{1+ 8/\gamma}}{2} = \frac{8/\gamma}{2(1 + \sqrt{1+ 8/\gamma})} \le \frac{2}{\gamma}.
$$
Again, one can choose $c = 2$ such that $Lv > 0$. Then \eqref{h_lowerbound} follows from the maximum principle.
		
        For the proof of \eqref{h_upper_lower}, first we assume that  $0<\varepsilon<1/64$. We consider the following rescaling of $h$. For $s\in{[0,(2\sqrt{\varepsilon})^{-1}]}$, let $r=\sqrt{\varepsilon}s$ and $$\hat{h}(s):=\varepsilon^{-\alpha/2} h(r):= \varepsilon^{-\alpha/2} h(\sqrt{\varepsilon}s).$$ 
		Then $\hat{h}$ satisfies
		\begin{equation*}
			\hat{L}\hat{h}:=\hat{h}''(s)+\Big(\frac{n-2}{s}+\frac{2s}{1+s^2}\Big)\hat{h}'(s)-\Big(\frac{n-2}{s^2}+\frac{2}{\gamma}\frac{1}{1+s^2}\Big)\hat{h}(s)=0.
		\end{equation*}
		By Lemma \ref{ODE_lemma} and \eqref{h_lowerbound}, we have 
		$\hat{h}(0)=0$, $2^{2\alpha-1}(2^\alpha-1)\le\hat{h}(4)\le 4^\alpha$, and $\hat{h}(s)\le s^\alpha$ for $s\in[0,(2\sqrt{\varepsilon})^{-1}]$. 
		Since $\hat{L}s=\frac{2s}{1 + s^2} \left( 1 - \frac{1}{\gamma} \right) > 0$, by the maximum principle, one can obtain 
		\begin{equation}\label{low}
			\hat{h}(s)\ge \frac{\hat{h}(4)}{4}s\ge C_1s
            \quad \mbox{for } s\in(0,4)
		\end{equation}
		where $C_1$ is some positive constant depending only on $n$. Let $v = s - s^{3/2}/2$, by a direct computation,
		$$
		\left| {\hat L}v + \frac{1}{4} \Big( n - \frac{1}{2} \Big) s^{-\frac{1}{2}}  \right| \le 2 \Big( 1-\frac{1}{\gamma} \Big) s + \Big( 3-\frac{2}{\gamma} \Big) s^{\frac{3}{2}}{\le 2s + 3s^{3/2}}.
		$$
		Hence $\hat L v < 0$ in $(0, s_0)$, for some small $s_0=s_0(n)$.  By the maximum principle, we have 
		\begin{equation}\label{up1}
			\hat{h} \le \frac{\hat{h}(s_0)}{v(s_0)} v \le Cs\quad \text{in}\,\, (0,s_0),
		\end{equation}
		where $C = \frac{s_0^\alpha}{v(s_0)}$. On the other hand, for $s\in[s_0, 4)$,
		\begin{equation}\label{up2}
			\hat{h}(s)\le s^\alpha\le s_0^{\alpha-1}s.
		\end{equation}
		Combining \eqref{low}, \eqref{up1}, and \eqref{up2} yields 
		$$ C_1s\le \hat{h}(s)\le C_2s, \quad  \mbox{for } s\in(0,{4}),$$ where $C_1$, $C_2$ are some positive constants depending only on $n$. Therefore, \eqref{h_upper_lower} follows by reversing the scaling. {Finally, when $\varepsilon\ge {1/64}$, we apply a similar argument above with $h$ in place of $\hat h$ and $1/2$ in place of ${4}$. The lemma is proved.} 
	\end{proof}
	
	Next, we establish a linear lower bound for $u$ when $x_1\ge 0$ in the specific setting of Theorem \ref{main_thm} by constructing a subsolution.
	
	\begin{lemma}\label{linear_lower}
		For $n \ge 2$, $\gamma \ge 1$, $k_1>0$, $k_2>0$, $\varepsilon \in (0,1/4)$, let $\Omega = B_{5}$, $\cD_1 , \cD_2$ be balls of radius $1$ centered at $(0', 1 + \varepsilon/2)$ and $(0', -1- \varepsilon/2)$, respectively, $\varphi(x) = x_1$, and let $u$ be the solution of \eqref{robin}. Then we have
		\begin{equation*}
			u_+\ge x_1 \quad \mbox{in }\widetilde{\Omega}\cap\{x_1\ge0\}, \quad u_-\ge \frac{x_1}{1+k_i\gamma} \quad \mbox{in }\mathcal{D}_i\cap \{x_1\ge 0\}, \quad i=1,2,
		\end{equation*}
		\begin{equation*}
			u_+\le x_1 \quad \mbox{in }\widetilde{\Omega}\cap\{x_1\le0\}, \quad u_-\le \frac{x_1}{1+k_i\gamma} \quad \mbox{in }\mathcal{D}_i\cap \{x_1\le 0\}, \quad i=1,2.
		\end{equation*}
	\end{lemma}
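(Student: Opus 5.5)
By the symmetry of the configuration under $x_1\mapsto -x_1$ and the uniqueness of the solution to \eqref{robin}, $u$ is odd in $x_1$: the pair $(-u_+(-x_1,\dots),-u_-(-x_1,\dots))$ also solves \eqref{robin} with datum $x_1$. So it suffices to prove the two lower bounds on $\{x_1\ge 0\}$; the upper bounds on $\{x_1\le0\}$ then follow by oddness. Oddness also gives $u_\pm=0$ on $\{x_1=0\}$.

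The plan is to work in the half-domain $\Omega^+:=\Omega\cap\{x_1>0\}$ and apply the maximum principle, Lemma \ref{lem:maximum_principle}, to a suitable difference. The argument of that lemma carries over verbatim to the half-domain, with the boundary $\partial\Omega^+$ consisting of $\partial\Omega\cap\{x_1\ge0\}$ together with the flat piece $\{x_1=0\}$, where we impose a Dirichlet inequality. The integration-by-parts argument only needs the test functions $(\cdot)_+$ to vanish on the Dirichlet part of the boundary.

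Define the candidate subsolution
\[
w_+ := x_1 \ \text{in } \widetilde\Omega, \qquad w_- := \frac{x_1}{1+k_i\gamma} \ \text{in } \mathcal{D}_i .
\]
Both pieces are harmonic, and $w$ agrees with $u$ on $\partial\Omega\cap\{x_1\ge0\}$ and on $\{x_1=0\}$. On $\partial\mathcal{D}_i$ the normal derivative of $x_1$ is $\partial_\nu x_1=\nu_1$. For the ball $\mathcal{D}_1$ centered at $c_1=(0',1+\varepsilon/2)$, the inward normal is $\nu=c_1-x$, so $\nu_1=-x_1$; similarly for $\mathcal{D}_2$, since $\nu_1=-x_1$ there as well. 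Hence $\partial_\nu w_+=-x_1$ and $k_i\partial_\nu w_-=-\frac{k_ix_1}{1+k_i\gamma}$. The jump term is
\[
-\gamma^{-1}(w_+-w_-)=-\gamma^{-1}x_1\Big(1-\frac1{1+k_i\gamma}\Big)=-\frac{k_ix_1}{1+k_i\gamma}.
\]
Thus $k_i\partial_\nu w_-=-\gamma^{-1}(w_+-w_-)$ exactly. Moreover $\partial_\nu w_+=-x_1\le -\frac{k_ix_1}{1+k_i\gamma}$ holds precisely when $x_1\ge0$, since $\frac{k_i}{1+k_i\gamma}\le 1$ whenever $\gamma\ge1$. (In fact $k_i/(1+k_i\gamma)<1/\gamma\le 1$, so this step is where the hypothesis $\gamma\ge1$ enters.)

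Set $z:=w-u$. Then $z$ is harmonic in each piece and satisfies $z\le0$ on the Dirichlet part of $\partial\Omega^+$. By linearity the transmission inequalities for $z$ are exactly those of \eqref{subrobin}: $\partial_\nu z_+\le -\gamma^{-1}(z_+-z_-)$ and $k_i\partial_\nu z_-\ge-\gamma^{-1}(z_+-z_-)$ (here with equality). Lemma \ref{lem:maximum_principle}, adapted to $\Omega^+$, then gives $z\le 0$, which is $u_+\ge x_1$ and $u_-\ge x_1/(1+k_i\gamma)$ on $\{x_1\ge0\}$.

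The main point to check is the validity of the half-domain maximum principle. The pieces $\mathcal{D}_i\cap\{x_1>0\}$ have Lipschitz boundaries, and $z_\pm^+$ vanish on $\{x_1=0\}$, so no boundary terms arise there in the proof of Lemma \ref{lem:maximum_principle}. Its argument therefore goes through unchanged, using the $H^1$ regularity of $u$ from Section \ref{sec:prelim}.
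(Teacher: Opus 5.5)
Your proposal is correct and follows essentially the same route as the paper. It uses the same explicit comparison pair $x_1$ and $x_1/(1+k_i\gamma)$, the identity $\partial_\nu x_1=-x_1$ on the spheres, the sign of $k_i/(1+k_i\gamma)-1$ coming from $\gamma\ge1$, and the maximum principle of Lemma \ref{lem:maximum_principle} on the half-domain with zero data on $\{x_1=0\}$ supplied by oddness. The only differences are cosmetic. You obtain the bounds on $\{x_1\le 0\}$ from oddness rather than by a second application of the maximum principle to the supersolution in $\Omega\cap\{x_1<0\}$. You also spell out why the half-domain version of the maximum principle is legitimate, which the paper leaves implicit.
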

	\begin{proof}
		Let $v_+:=x_1$ in $\widetilde\Omega$ and $v_-=\frac{x_1}{1+k_i\gamma} \mbox{in }\mathcal{D}_i$, $i=1,2$. Then clearly 
		$$
		\Delta v_+=0 \quad \mbox{in } \widetilde{\Omega}, \quad \Delta v_-=0 \quad \mbox{in }\mathcal{D}_i, \quad i=1,2.
		$$
		Note that on $\partial\mathcal{D}_1\cup\partial\mathcal{D}_2$,
		$\partial_\nu x_1=-x_1$. Therefore, on $\partial\mathcal{D}_i$, $i=1,2$,
		\begin{equation*}
			\partial_\nu v_++\gamma^{-1}(v_+-v_-)=\big(\frac{k_i}{1+k_i\gamma}-1\big)x_1,\quad
			k_i\partial_\nu v_-+\gamma^{-1}(v_+-v_-)=0.
		\end{equation*} 
		Since $\gamma\ge 1$ and $k_1,k_2>0$, $$\frac{k_i}{1+k_i\gamma}-1 <0.$$
		Then we set $w=v-u$, namely $w_+=v_+-u_+$, $w_-=v_--u_-$. It suffices to show that 
		$w\le 0$ in $\Omega\cap\{x_1\ge 0\}$ and $w\ge 0$ in $\Omega\cap\{x_1\le 0\}$.   
		Clearly, $w$ satisfies 
		$$  
		\Delta w_+=0 \quad \mbox{in } \widetilde{\Omega}, \quad \Delta w_-=0 \quad \mbox{in }\mathcal{D}_i, \quad i=1,2
		$$
		and 
		on $\partial\mathcal{D}_i$, $i=1,2$,
		\begin{equation*}
			\partial_\nu w_++\gamma^{-1}(w_+-w_-)=\big(\frac{k_i}{1+k_i\gamma}-1\big)x_1,
		\end{equation*}
		\begin{equation*}
			k_i\partial_\nu w_-+\gamma^{-1}(w_+-w_-)=0.
		\end{equation*} 
		Therefore, $w$ is a subsolution
		of \eqref{robin} (as defined in Lemma \ref{lem:maximum_principle}
		in $\Omega\cap \{x_1>0\}$ and a supersolution in $\Omega\cap \{x_1<0\}$. We also know that $$w_+=0 \quad \mbox{on } \partial\Omega,$$
		and by symmetry, $u=0$ when $x_1=0$ and therefore $$w=0 \quad \mbox{on } \{x_1=0\}.$$
		Then by the maximum principle Lemma \ref{lem:maximum_principle} applied in  $\Omega\cap \{x_1>0\}$ and $\Omega\cap \{x_1<0\}$ respectively, we have
		$w\le 0$ in $\Omega\cap\{x_1\ge 0\}$ and $w\ge 0$ in $\Omega\cap\{x_1\le 0\}$. The proof is complete.
	\end{proof}
	Under the assumptions of Lemma \ref{linear_lower}, by symmetry, we know that $u(0)=0$,
	$$
	g_1(0') = g_2(0') = 0,
	$$
	and
	$$
	\partial_{x_j}g_1(0') = \partial_{x_j}g_2(0') = 0 \quad \mbox{for}~j = 2,3,\ldots,n-1,
	$$
	where $g_i=u_-|_{\partial \cD_i}$.
	By Lemma \ref{linear_lower},
	we also have
	\begin{equation}\label{est:g_x1_lower}
		\partial_{x_1} g_i(0')\ge \frac{1}{1+k_i\gamma}>0  \quad \mbox{for } i=1,2. 
	\end{equation}

	Now we are ready to prove Theorem \ref{main_thm}. We separate the proof into two cases when $\gamma>1/\mu$ and when $\gamma=1/\mu$.
	
	\begin{proof}[Proof of Theorem \ref{main_thm} when $\gamma>1/\mu$]
		Recall that we assume $\mu = 1$. We will prove \eqref{grad_u_lower_bound} for $n \ge 3$ in the following four steps. The proof for $n = 2$ is similar and simpler. A brief remark on the case $n=2$ will be provided at the end.
		
		\textbf{Step 1.} By the elliptic theory, since $\widetilde\Omega$ is symmetric in $x_1$ and  $\varphi$ is odd in $x_1$, we know that {$u_+$ and $u_-$ are smooth up to the interfacial boundaries for fixed $\varepsilon>0$,} $|u| \le 5$, and $u$ is odd in $x_1$.
		Let $\tilde{u}$, $v$, $\bar{v}$ be defined as in Lemma \ref{lemma_reduction_2}.
		By Lemma \ref{lemma_reduction_2}, we know that $\bar{v}$ satisfies the equation \eqref{equation_v_bar_2}
		$$
		\dv((\varepsilon+|y'|^2)\nabla \bar v) - 2\gamma^{-1} \bar{v}(y') = \dv F + \tilde{G}_1+\tilde{G}_2 \quad \mbox{in}~B_{3/4} \subset \bR^{n-1}.
		$$
		For simplicity of notation, we still denote $\tilde{G}_i$ by $G_i$.
		From the proof of Theorem \ref{thm_upperbound}, we know that
		\begin{equation}\label{prelim_estimate}
			|\nabla_{y'} v(y)| \le C(\varepsilon +|y'|^2)^{-1/2}, \quad |\partial_n v(y)| \le C (\varepsilon + |y'|^2)/ \varepsilon\quad \mbox{in}~ Q_{3/4,\varepsilon},
		\end{equation}
		where $Q_{3/4,\varepsilon}$ is defined in \eqref{Q_s_t}. Since both $\cD_1$ and $\cD_2$ are balls, $e^i$ from \eqref{a_ij_formula} can be bounded by $C|y'|^4$. Therefore by \eqref{FG_bound_b},  for $|y'| < 3/4$ we have
		\begin{equation}\label{FG_bound_2}
			\left\{
			\begin{aligned}
				&|F(y')| \le C\Big(|y'|(\varepsilon + |y'|^2) + |y'|^4 (\varepsilon +|y'|^2)^{-1/2}\Big), \\
				&|G_1(y')| \le C(\varepsilon + |y'|^2),\quad
				|G_2(y')| \le C|y'|^{1+\beta}.
			\end{aligned}\right.
		\end{equation}
		We denote $Y_{k,i}$ to be {a normalized spherical harmonic of degree $k$} so that $\{Y_{k,i}\}_{k,i}$ forms an orthonormal basis of $L^2(\bS^{n-2})$, {$Y_{1,1}(\xi)=\xi_1/c_n$, where $c_n^2=\int_{\bS^{n-2}}\xi_1^2\,d\xi$}, and we write $y' = (r ,\xi)$ {in polar coordinates}. Since $\bar v$ is odd with respect to $y_1$, and in particular $\bar{v}(0) = 0$, we have the following decomposition
		\begin{equation}
			\label{v_bar_expansion}
			\bar{v}(y') = V_{1,1}(r)Y_{1,1}(\xi) + \sum_{k=2}^\infty \sum_{i=1}^{N(k)} V_{k,i}(r)Y_{k,i}(\xi), \quad { y'} \in B_{3/4}\setminus\{0\},
		\end{equation}
		where $V_{k,i}(r) = \int_{\bS^{n-2}} \bar{v}(r,\xi) Y_{k,i}(\xi) \, d\xi$ and $V_{k,i} \in C([0,3/4)) \cap C^\infty((0,3/4))$. Since $\bar{v}(0) = 0$ and $\varepsilon + |y'|^2$ is independent of $\xi$, we know that $V_{1,1}$ satisfies $V_{1,1}(0) = 0$, and
		\begin{equation*}
			LV_{1,1} = H_1(r)+H_2(r), \quad 0 < r <3/4,
		\end{equation*}
		where $L$ is the differential operator defined in \eqref{def_L},
		\begin{align*}
			H_1(r) &= \int_{\bS^{n-2}} \frac{(\dv F + G_1) Y_{1,1}(\xi)}{\varepsilon+r^2} \, d\xi \\
			&= \int_{\bS^{n-2}} \frac{\partial_r F_r {+\frac{n-2}{r}F_r}
				+ \frac{1}{r} {\operatorname{div}_{\bS^{n-2}}F_\xi} + G_1}{\varepsilon + r^2} Y_{1,1}(\xi) \, d\xi\\
			&= \partial_r \left(\int_{\bS^{n-2}} \frac{F_r}{\varepsilon + r^2} Y_{1,1}(\xi) \, d\xi \right) \\
			&\quad + \int_{\bS^{n-2}} {\Big(\frac{2r}{(\varepsilon+r^2)^2}+ \frac{n-2}{r(\varepsilon+r^2)}\Big)}F_r Y_{1,1}- \frac{{F_\xi\cdot\nabla_\xi Y_{1,1}}}{r(\varepsilon + r^2)} + \frac{G_1 Y_{1,1}}{\varepsilon+ r^2}\, d\xi\\
			&=: A'(r) + B(r), \quad 0 < r < 3/4.
		\end{align*}
		and
		\begin{align*}
			H_2(r) = \int_{\bS^{n-2}} \frac{G_2 Y_{1,1}}{\varepsilon+ r^2}\, d\xi, \quad 0<r<3/4.
		\end{align*}
		Hence $A(r),B(r), H_2(r) \in C^1((0,3/4))$ satisfy, in view of \eqref{FG_bound_2}, that
		\begin{equation}
			\label{AB_bounds}
			|A(r)| \le C(n)r , \quad |B(r)| \le C(n), \quad |H_2(r)|\le C(n)\frac{r^{1+\beta}}{\varepsilon+r^2},\quad 0 < r < 3/4.
		\end{equation}
		
		\textbf{Step 2.}
		We will prove, for some $\varepsilon$-dependent constant $C_1(\varepsilon)$ and positive $\varepsilon$-independent constant $C_2$, that
		\begin{equation}
			\label{V_11_formula}
			|V_{1,1}(r) - C_1 (\varepsilon) h(r)| \le  C_2 r^{1+\min\{\alpha,\beta\}}, \quad 0<r\le 1/2,
		\end{equation}
		where $h(r)$ is the solution of \eqref{Homogeneous_ODE} satisfying $h(0)=0$, $h(1/2) = 1/2$. We use the method of reduction of order to write down bounded functions $p_1$, $p_2$ satisfying $Lp_i = H_i$ in {$(0,1/2)$}, and then give the estimates $|p_1(r)| \le C r^{1+\alpha}$, $|p_2(r)| \le C r^{1+\beta}$.
		
		For $i=1,2$, let $p_i = h w_i$ and
		$$
		w_i(r) := \int_0^r\frac{1}{h^2(s) s^{n-2} (\varepsilon + s^2)} \int_0^s h(\tau) \tau^{n-2} (\varepsilon + \tau^2) H_i(\tau) \, d\tau ds, \quad 0 < r \le 1/2.
		$$
		By a direct computation,
		$$
		Lp_i = L(hw_i) = h w_i'' + \left[ 2h' + \left( \frac{n-2}{r} + \frac{2r}{\varepsilon + r^2} \right)h \right]w_i' = \frac{h}{G}(Gw_i')' = {H_i},
		$$
		where $G= h^2 r^{n-2}(\varepsilon + r^2)$. The estimate $|p_1(r)| \le C r^{1+\alpha}$ can be proved in the same way as in Step 2 of the proof of \cite{DongYangZhu26}*{{Theorem} 1.5}. Here we show the estimate $|p_2(r)| \le C r^{1+\beta}$ using the improved estimate \eqref{h_upper_lower}.
		Since $h$ is increasing, by \eqref{AB_bounds}, we have
		\begin{align*}
			|w_2(r)|\le C\int_0^r\frac{1}{h^2(s) s^{n-2} (\varepsilon + s^2)} \int_0^s h(\tau) \tau^{n-1+\beta}  \, d\tau ds\le C\int_0^r\frac{s^{2+\beta}}{h(s)  (\varepsilon + s^2)}\, ds.
		\end{align*}
		Using \eqref{h_upper_lower}, we can estimate that
		$$\frac{s^{2+\beta}}{h(s)  (\varepsilon + s^2)}\le C \frac{s^{2+\beta}}{\varepsilon^\frac{\alpha-1}{2}s\,\varepsilon}\le C s^{\beta-\alpha} \quad \mbox{for } s\in (0,{4}\sqrt{\varepsilon}).$$
		By \eqref{h_lowerbound},  we also have 
		$$\frac{s^{2+\beta}}{h(s)  (\varepsilon + s^2)}\le C \frac{s^{2+\beta}}{s^\alpha\,s^2}\le C s^{\beta-\alpha} \quad \mbox{for } s\in [{4}\sqrt{\varepsilon},1/2].$$
		Therefore, using \eqref{h_bounds},
		\begin{align*}
			|p_2(r)| &=   h(r) |w_2(r)|\le Ch(r)\int_0^r\frac{s^{2+\beta}}{h(s)  (\varepsilon + s^2)}\, ds\\
			&\le Cr^\alpha\int_0^r s^{\beta-\alpha}\,ds\le  Cr^{1+\beta}, \quad\quad 0<r\le 1/2.
		\end{align*}
		Since $V_{1,1} - p_1-p_2$ is bounded and satisfies $L(V_{1,1} - p_1-p_2) = 0$ in $(0,1/2)$ and $V_{1,1}(0) - p_1(0)-p_2(0) = 0$, by Lemma \ref{ODE_lemma}, there exists a constant $C_1(\varepsilon)$ such that \begin{equation*}
			V_{1,1} - p_1-p_2 = C_1(\varepsilon)h.
		\end{equation*}
		Hence \eqref{V_11_formula} follows.
		
		\textbf{Step 3.} We will show that 
		\begin{equation}\label{C_1_epsilon_lowerbound}
			C_1 (\varepsilon) > \frac{1}{C}
		\end{equation}
		for some positive $\varepsilon$-independent constant $C$.
		
		By the definition of $\tilde u_+$ in \eqref{subtr} {and the choice of $a$ and $b$},
		$$\tilde u_+=u_++\frac{1}{{2}(\gamma-1)} (\partial_{x_1} g_1(0')+\partial_{x_1}g_2(0'))x_1. $$
		Therefore, by Lemma \ref{linear_lower} and \eqref{est:g_x1_lower}, we have
		$\tilde u_+ \ge x_1$ in $\{x_1 \ge 0\}\cap\widetilde\Omega$ and $\tilde u_+ \le x_1$ in $\{x_1 \le 0\}\cap\widetilde\Omega$. Then
		$$\left\{\begin{aligned}
			&\bar{v}(y') \ge y_1, &&\mbox{when}~y_1\ge 0,\\ 
			&\bar{v}(y') \le y_1, &&\mbox{when}~y_1\le 0.\\ 
		\end{aligned}\right.
		$$
		Recalling that $Y_{1,1}(\xi)=\xi_1/c_n$, we obtain
		$$
		V_{1,1}(r) = \int_{\bS^{n-2}}  \bar{v}(r,\xi) Y_{1,1}(\xi) \, d\xi \ge c_n r, \quad 0 < r < 3/4,
		$$
		where $c_n:=(\int_{\bS^{n-2}}  {\xi_1^2} d\xi)^{1/2}>0$.
		Combining with \eqref{h_bounds} and \eqref{V_11_formula}, we have
		\begin{align*}
			c_n r \le V_{1,1}(r) \le C_1(\varepsilon)h(r) + \frac{1}{2}c_n r \le C_1(\varepsilon)r^\alpha + \frac{1}{2}c_n r \quad \forall 0 < r \le r_0,
		\end{align*}
		where ${r_0 = \min\{1/2,(c_n/(2C_2))^{1/\min\{\alpha,\beta\}}\}}$. This implies
		$$
		C_1(\varepsilon) \ge \frac{1}{2}c_n r_0^{1-\alpha}.
		$$
		
		\textbf{Step 4.} Completion of the proof. 
		
		It follows, in view of \eqref{h_bounds}, \eqref{V_11_formula} and \eqref{C_1_epsilon_lowerbound}, that
		\begin{equation}
			\label{V_11_lowerbound}
			V_{1,1}(r) \ge \frac{1}{C}  h(r) - C_2 r^{1+\min\{\alpha,\beta\}} \ge \frac{1}{2C} h(r) , \quad 0 < r < r_1
		\end{equation}
		for some small $\varepsilon$-independent constant $r_1$. By \eqref{h_lowerbound},
		\begin{equation}
			\label{h1_lowerbound}
			h(2c\sqrt{\varepsilon}) > 2^{\alpha-1}(2^\alpha - 1)c^\alpha \varepsilon^{\alpha/2},
		\end{equation}
		where $c$ is the constant in Lemma \ref{lemma_subsolution}. By \eqref{v_bar_expansion}, \eqref{V_11_lowerbound}, and \eqref{h1_lowerbound}, we have
		\begin{align*}
			\left( \int_{\bS^{n-2}} |\bar{v}(2c\sqrt{\varepsilon},\xi)|^2 \, d\xi \right)^{1/2} &\ge |V_{1,1}(2c\sqrt{\varepsilon})| \ge \frac{1}{C} h(2c\sqrt{\varepsilon}) \ge \frac{1}{C}\varepsilon^{\alpha/2} \quad \forall 0 < \varepsilon < \varepsilon_0
		\end{align*}
		for some small positive $\varepsilon_0$ depending only on $n$ and $\gamma$. Then, there exists a $\xi_0 \in \bS^{n-2}$ such that $|\bar{v}(2c\sqrt{\varepsilon}, \xi_0)| \ge \frac{1}{C} \varepsilon^{\alpha/2}$. Since $\bar v$ is the vertical average of $v$, and recall that $v(y) = \tilde u_+(x)$, there exists an $x_n \in (-\varepsilon/2 + h_2(x'), \varepsilon/2 + h_1(x'))$ such that
		\begin{equation}
			\label{u_lower_bound}
			|\tilde u_+(2c\sqrt{\varepsilon}, \xi_0, x_n)| \ge \frac{1}{C} \varepsilon^{\frac{\alpha}{2}}.
		\end{equation}
		Estimate \eqref{grad_u_lower_bound} follows from \eqref{u_lower_bound}, $u(0) = 0$, and the fact that $\partial_{x_1} g_i(0')$ is uniformly bounded. This concludes the proof when $n \ge 3$.
		
		When $n = 2$, by Lemma \ref{lemma_reduction}, $\bar{v}$ satisfies the equation
		$$
		((\varepsilon+|y_1|^2)\bar v')'- 2\gamma^{-1} \bar{v} = F' + {G_1+G_2}\quad \mbox{in}~(-3/4,3/4),
		$$
		where $F$, {$G_1$, and $G_2$} satisfy the same bounds as in \eqref{FG_bound_2}. By using $\bar{v}$ in place of $V_{1,1}$ and repeating steps 2-4 with minor modifications, we can establish the estimate \eqref{grad_u_lower_bound} for $n = 2$.
	\end{proof}

	\begin{proof}[Proof of Theorem \ref{main_thm} when $\gamma=1/\mu$]
		As in the proof of the case when $\gamma>1/\mu$, we assume that $\mu=1$ and focus on the proof for $n\ge 3$. {We use the same definitions} and notation as those in the previous case when $\gamma>1/\mu$. {Note that in this case, the linear function in \eqref{subtr} is identically equal to zero.}  By Lemma \ref{lemma_reduction}, we know that $\bar{v}$ satisfies the equation \eqref{equation_v_bar}
		$$
		\dv((\varepsilon+|y'|^2)\nabla \bar v) - 2\gamma^{-1} \bar{v}(y') = \dv F + {G}_1+{G}_2 \quad \mbox{in}~B_{3/4} \subset \bR^{n-1}.
		$$
		Since \eqref{prelim_estimate} still holds in this case, by \eqref{FG_bound}, for $|y'|<3/4$ we have
		\begin{equation}\label{FG_bound_3}
			|F(y')| \le C\Big(|y'|(\varepsilon + |y'|^2) + |y'|^4 (\varepsilon +|y'|^2)^{-1/2}\Big), \quad
			|G_1(y')| \le C(\varepsilon + |y'|^2).
		\end{equation} 
		Since $\bar v$ is odd with respect to $y_1$, and in particular $\bar{v}(0) = 0$, we have the decomposition \eqref{v_bar_expansion}.
		Since $\bar{v}(0) = 0$ and $\varepsilon + |y'|^2$ is independent of $\xi$, we know that $V_{1,1}$ satisfies $V_{1,1}(0) = 0$, and
		\begin{equation*}
			LV_{1,1} = H_1(r) + H_2(r), \quad 0 < r <3/4,
		\end{equation*}
		where $L$ is defined in \eqref{def_L},
		\begin{align*}
			H_1(r) &= -\frac{1}{\gamma} \int_{\bS^{n-2}} \frac{(\partial_{y_1}g_1(0) + \partial_{y_1}g_2(0))y_1 Y_{1,1}(\xi)}{\varepsilon+r^2} \, d\xi \\
			&= - {c_n}\frac{\partial_{y_1}g_1(0) + \partial_{y_1}g_2(0)}{\gamma} \frac{r}{\varepsilon+r^2},\quad {c_n>0,}
		\end{align*}
		and
		\begin{align*}
			H_2(r) &= \int_{\bS^{n-2}} \frac{(\dv F + {G_1}) Y_{1,1}(\xi)}{\varepsilon+r^2} \, d\xi\\
			& -\sum_{i=1}^2 \frac{1}{\gamma} \int_{\bS^{n-2}} \frac{ \big( \sqrt{1+ |\nabla_{y'}h_i|^2} g_i(y') - \partial_{y_1}g_i(0)y_1 \big) Y_{1,1}(\xi)}{\varepsilon+r^2} \, d\xi\\
			&= \partial_r \left(\int_{\bS^{n-2}} \frac{F_r}{\varepsilon + r^2} Y_{1,1}(\xi) \, d\xi \right) + \int_{\bS^{n-2}} \Big(\frac{2r}{(\varepsilon+r^2)^2}{+\frac{n-2}{r(\varepsilon+r^2)}}\Big) F_r Y_{1,1}\\
			&\quad - \frac{F_\xi\cdot\nabla_\xi Y_{1,1}}{r(\varepsilon + r^2)} + \frac{{G_1} Y_{1,1}}{\varepsilon+ r^2}\, d\xi\\
			&\quad  -\sum_{i=1}^2 \frac{1}{\gamma} \int_{\bS^{n-2}} \frac{ \big( \sqrt{1+ |\nabla_{y'}h_i|^2} g_i(y') - \partial_{y_1}g_i(0)y_1 \big) Y_{1,1}(\xi)}{\varepsilon+r^2} \, d\xi\\
			&=: A'(r) + B(r) + D(r), \quad 0 < r < 3/4.
		\end{align*}
		By \eqref{FG_bound_3}, \eqref{fg_0}, \eqref{def:c_2}, and $\|\nabla g_i\|_{C^{\beta}}\le C/k_i$ as a consequence of \eqref{u-_1+holder},
		one can show that $A(r),B(r), D(r) \in C^1((0,3/4))$ satisfy
		\begin{equation}
			\label{ABD_bounds}
			|A(r)| \le C(n)r, \quad |B(r)| \le C(n), \quad |D(r)| \le C(n,k_1,k_2) \frac{r^{1+\beta}}{\varepsilon+r^2}
		\end{equation}
		for $r\in(0,3/4)$ and $\beta \in(0, 1)$.
		
		{ Note that $h(r)=r$ is a homogeneous solution for $L$ when $\gamma=1$. One can also use the same method of reduction of order as in the proof of \cite{DongYangZhu26}*{Lemma 7.1} to show that $h(r) = r$ is the unique solution of $Lh=0$ in $(0,1/2)$, such that $h(0)=0$ and $h(1/2)=1/2$.}
		Next, we use the method of reduction of order again to find two bounded particular solutions $p_1$ and $p_2$ satisfying $Lp_i = H_i$ in $(0,1/2)$ for $i = 1,2$. 
		We define
		\begin{align*}
			w_1(r) :=& -\int_r^{\frac{1}{2}}\frac{1}{h^2(s) s^{n-2} (\varepsilon + s^2)} \int_0^s h(\tau) \tau^{n-2} (\varepsilon + \tau^2) H_1(\tau) \, d\tau ds\\
			=& {c_n} \frac{\partial_{y_1}g_1(0) + \partial_{y_1}g_2(0)}{2\gamma (n+1)} \Big[ {\log(\varepsilon + \frac{1}{4})} - \log (\varepsilon + r^2) \Big],
		\end{align*}
		we know that
		$$
		p_1(r) := h(r) w_1(r) = {c_n}\frac{\partial_{y_1}g_1(0) + \partial_{y_1}g_2(0)}{2\gamma (n+1)} r\Big[ \log(\varepsilon + \frac{1}{4}) - \log (\varepsilon + r^2) \Big]
		$$
		is a bounded particular solution to ${L}p_1=H_1$. Using \eqref{ABD_bounds}, we can construct and estimate $|p_2| \le Cr^{1+\min\{\alpha,\beta\}}$ as in the case $\gamma>1$. Then there exists a constant $C_1(\varepsilon)$ such that
		$$
		V_{1,1}(r) = p_1(r) + p_2(r) + C_1(\varepsilon)r.
		$$
		Plugging in $r = 1/2$, we can see that $|C_1(\varepsilon)| \le C$ independent of $\varepsilon$. Therefore, $p_1(r)$ is the leading order term. Namely,
		\begin{equation}\label{V11_lower_2}
		\begin{aligned}
		V_{1,1}(r)&\ge p_1(r)-Cr^{1+\beta}-Cr\\
		&\ge -\frac{{c_n}((1+k_1)^{-1}+(1+k_2)^{-1})}{4\gamma(n+1)}r\log(\varepsilon+r^2),\\
		&\hspace{2em} r\in(0,r_1),\quad {0<\varepsilon<r_1^2}.
		\end{aligned}
		\end{equation}
		where $r_1=r_1(n,\gamma,k_1,k_2)$ is a positive constant independent of $\varepsilon$ {and in the last inequality we used \eqref{est:g_x1_lower}}.
		By \eqref{v_bar_expansion} and \eqref{V11_lower_2}, we have
		\begin{align*}
			\left( \int_{\bS^{n-2}} |\bar{v}(\sqrt{\varepsilon},\xi)|^2 \, d\xi \right)^{1/2} &\ge |V_{1,1}(\sqrt{\varepsilon})| \ge -\frac{1}{C} \sqrt{\varepsilon}\log(\varepsilon)\quad \forall 0 < \varepsilon < \varepsilon_0
		\end{align*}
		for some small positive $\varepsilon_0$ depending only on $n$, $\gamma$, $k_1$, $k_2$. Then {arguing as in the case when $\gamma>1/\mu$,}
		{we conclude the proof} when $n \ge 3$.
		
		When $n = 2$, by Lemma \ref{lemma_reduction}, $\bar{v}$ satisfies the equation
		$$
		((\varepsilon+|y_1|^2)\bar v')'- 2\gamma^{-1} \bar{v} = F' + G_1+G_2\quad \mbox{in}~(-3/4,3/4),
		$$
		where $F$ and $G_1$ satisfy the same bounds as in \eqref{FG_bound_3}. By using $\bar{v}$ in place of $V_{1,1}$ and repeating steps 2-4 with minor modifications, we can establish the estimate \eqref{grad_u_lower_bound_new} for $n = 2$.
	\end{proof}

	\section{Variable bonding parameters}\label{sec:variable-bonding}
	
	In applications, the bonding properties along the inclusion surfaces may vary with position. It is therefore natural to allow the bonding parameters to depend on $x$. In this section, we briefly explain how the preceding results and proofs extend to this case.
	
	For $i=1,2$, assume that $\gamma_i\in C^{1,1}(\partial\cD_i)$ and that
	\begin{equation}
		\label{eq9.38}
		\gamma_i(x)\geq \gamma_0>0\quad\mbox{on}~\partial\cD_i.
	\end{equation}
	We consider the problem
	\begin{equation}\label{robin_variable}
		\begin{cases}
			\Delta u_+ =0&\mbox{in}~\widetilde\Omega,\\
			\Delta u_- =0&\mbox{in}~\mathcal{D}_1\cup\mathcal{D}_2,\\ \partial_\nu u_+=k_i \partial_\nu u_-= -\gamma_i(x)^{-1}(u_+-u_-)&\mbox{on}~\partial \mathcal{D}_i, \quad i=1,2,\\
			u_+= \varphi&\mbox{on}~\partial\Omega.
		\end{cases}
	\end{equation}
	The existence, uniqueness, and the maximum principle results established in Section \ref{sec:prelim} still hold with minor modifications.
	
	We define $\bar\gamma$ as the harmonic mean of the two bonding parameters evaluated at the closest points
	\begin{equation}\label{def:gamma_bar}
		\bar\gamma:= \frac{2}{\dfrac{1}{\gamma_1(0', \varepsilon/2)} + \dfrac{1}{\gamma_2(0', -\varepsilon/2)}}.
	\end{equation}
	We discuss separately the case with symmetry and the general case without it.\\
	
	\textbf{Case 1: Symmetric domains and data.}
	
	Under the assumption on symmetry imposed in Theorem \ref{thm_upperbound}, the conclusions of Theorem \ref{thm_upperbound} remain valid, and the optimality of the resulting estimates follows from Theorem \ref{main_thm}.
	
	\begin{theorem}
		{Under the assumptions of Theorem \ref{thm_upperbound}, let} $\Omega_{R_0}$ be symmetric in $x_j$, { let $\gamma_1(x)$ and $\gamma_2(x)$ be even in $x_j$,} and let $u$ be the solution of \eqref{robin_variable} that is odd in $x_j$ for some $1 \le j \le n-1$. Then estimates \eqref{grad_u_upperbound1}--\eqref{grad_u_upperbound2} remain valid with $\gamma$ replaced by $\bar\gamma$ defined in \eqref{def:gamma_bar}.
	\end{theorem}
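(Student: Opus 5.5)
We retrace the proof of Theorem \ref{thm_upperbound} and check, step by step, where the constant $\gamma$ was used. Each appearance will either survive with $\gamma$ replaced by a variable function, or produce only an error term that is of higher order near the origin.

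First, the local estimate Theorem \ref{thm-1/2} carries over to variable $\gamma_i\in C^{1,1}$. In Step 1 we subtract $Ax_n+B$ using the values $\gamma_i(x_0)$. In Step 3 the functions $\tilde A,\tilde B$ are built from $F_1/\gamma_1$ and $F_2/\gamma_2$; since $\gamma_i$ is Lipschitz and bounded below, the bounds \eqref{est:psi} and those on $c_L,c_R,b$ are unchanged. The extra contribution $(\gamma_i(x)-\gamma_i(x_0))\partial_\nu(Ax_n)$ to the Robin data has $C^\beta_{x'}$ size $Cr^{1-\beta}|g_1(x_0')|$, which is harmless. Hence \eqref{gradient-1/2a} holds, and Lemma \ref{lem:u_holder} follows as before. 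The symmetry assumptions ($\gamma_i$ even and $u$ odd in $x_j$) give $g_1(0')=g_2(0')=0$, so we may use \eqref{gradient-1/2a}.

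Second, we redo the dimensional reduction of Lemma \ref{lemma_reduction}. The boundary terms become $\gamma_1(y')^{-1}\sqrt{1+|\nabla h_1|^2}\,(v(y',\varepsilon)-g_1)$ and the analogous term on the bottom. We write
\[
\gamma_1^{-1}v(y',\varepsilon)+\gamma_2^{-1}v(y',-\varepsilon)=2\bar\gamma^{-1}\bar v+\gamma_1^{-1}\bigl(v(y',\varepsilon)-\bar v\bigr)+\gamma_2^{-1}\bigl(v(y',-\varepsilon)-\bar v\bigr)+\bigl(\gamma_1^{-1}(y')-\gamma_1^{-1}(0')+\gamma_2^{-1}(y')-\gamma_2^{-1}(0')\bigr)\bar v .
\]
The symmetry makes $\gamma_i$ even in $x_j$, but not necessarily $C^1$-flat at $0'$ in other directions. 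Thus the last term is $O(|y'|)\,|\bar v|$ rather than $O(|y'|^2)|\bar v|$, and it must be absorbed into $G_1$. We will use the bootstrap itself to control it. From \eqref{v_bar_L2_sphere_old} and oddness we get $|\bar v|\lesssim |y'|^{\tilde\alpha}$ on spheres, so this term is bounded by $|y'|^{1+\tilde\alpha}$. This bound is of order $|y'|^{\sigma-1}(\varepsilon+|y'|^2)^{1-s}$ for admissible $s$, because in the norm \eqref{def_norm} with $s\le 1/2$ one only needs $|y'|^{1+\sigma}$ decay. Alternatively, $\bar v$ bounded already gives $O(|y'|)$, which is of order $|y'|^{\sigma-1}(\varepsilon+|y'|^2)^{1-s}$ once $s\ge \frac{2-\sigma}{2}\cdot\frac12$. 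This term should be checked against the iteration, and we expect this bookkeeping to be the main obstacle.

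The $G_2$ term and the linear subtraction \eqref{subtr} are handled similarly. The choice of $b'$ is now governed by $\gamma_i(0)$. The condition for the critical case becomes $\bar\gamma=1$, because the coefficient multiplying $b'\cdot x'$ in $\tilde G_2$ is $\gamma_1^{-1}(0)+\gamma_2^{-1}(0)-\bigl(\text{an } O(1)\text{ factor from } \nabla h_1-\nabla h_2\bigr)$, which after normalization is proportional to $\bar\gamma^{-1}-1$. With these modifications, the reduced equation is \eqref{equation_v_bar_2} with $\bar\gamma$ in place of $\gamma$, and the error terms satisfy \eqref{FG_bound_b}, up to the extra term discussed above. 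Proposition \ref{prop_grad_v_bar_control} depends only on the homogeneous operator, so it applies with $\alpha(n,\bar\gamma,\mu)$. The Case 1/Case 2 iterations in the proof of Theorem \ref{thm_upperbound} then yield \eqref{grad_u_upperbound1}--\eqref{grad_u_upperbound2} with $\bar\gamma$ in place of $\gamma$.
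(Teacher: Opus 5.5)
Your argument is correct and follows the paper's route. The only new term in the reduced equation is $(\gamma_1^{-1}(y')-\gamma_1^{-1}(0')+\gamma_2^{-1}(y')-\gamma_2^{-1}(0'))\bar v=O(|y'|)\,|\bar v|$. It is of higher order because oddness forces $\bar v(0')=0$. The homogeneous operator, the exponent $\alpha$, and the critical value are all governed by $\bar\gamma$. Your identification of the critical case through the coefficient $2(\bar\gamma^{-1}-1)$ of $b'\cdot x'$ is correct and more explicit than the paper's sketch.

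\textbf{The local estimate.} This is where you diverge from the paper, and your claim that the bounds on $c_L,c_R$ are unchanged is false. With $\tilde A,\tilde B$ built from $F_1/\gamma_1$ and $F_2/\gamma_2$, one has $F_1/\gamma_1-F_2/\gamma_2\approx F_0(\gamma_1^{-1}-\gamma_2^{-1})=O(1)$. Hence $\tilde A\sim r^{-4}$, and $\partial_{y_n}^2\psi$ acquires a term $6\tilde A y_n\sim r^{-2}$. The leading zeroth-order coefficient is then about $-a^{nn}F_0r^{-2}(2\gamma_1^{-1}-\gamma_2^{-1})$ at $y_n=r^2$, which is positive when $\gamma_1>2\gamma_2$.

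The paper avoids this by carrying the $O(1)$ asymmetry in the term $F_0\,\tfrac{\gamma_1^{-1}-\gamma_2^{-1}}{2}\,y_n$, which is linear in $y_n$. Only the $O(r^2)$ corrections $E_i=F_i-F_0$ are left in the cubic part. As a result $c_L=-\tfrac{a^{nn}F_0}{2r^2}(\gamma_1^{-1}+\gamma_2^{-1})<0$ survives. This sign is what yields \eqref{gradient-1/2}, which is needed in the nonsymmetric cases.

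For the present theorem your shortcut is still enough. Under the symmetry you only use \eqref{gradient-1/2a}. Its proof needs only \eqref{est:psi}, which your $\psi$ does satisfy, and $|\bar c|\le C$, not the sign of $c_L$.

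\textbf{Pointwise control of the extra term.} The decay you take from \eqref{v_bar_L2_sphere_old} is in $L^2(\partial B_R)$, but \eqref{def_norm} is a pointwise norm. You need to upgrade it, as the paper does. Integrate the current bound $|\nabla\bar v|\le C(\varepsilon+|y'|^2)^{-s}$ starting from $\bar v(0')=0$. For $s<1/2$ this gives $|\bar v(y')|\le C|y'|(\varepsilon+|y'|^2)^{-s}$. The extra term is then admissible in $\|\cdot\|_{\varepsilon,\sigma-1,s}$ at every stage of the iteration.

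With only $|\bar v|\le C$, the extra term is admissible exactly when $s\ge\sigma/2$, not $s\ge(2-\sigma)/4$ as you wrote. So the decay of $\bar v$ is genuinely needed in the last steps of the iteration, including the step $s=\sigma/4$ in the critical case.
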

	
	Since the proof essentially follows that of Theorem \ref{thm_upperbound} with minor modifications, we point out only the necessary modifications and give a brief proof sketch. In the proof of Theorem \ref{thm-1/2}, we need to change the auxiliary function $\psi$ to be
\[
\begin{aligned}
\psi(y)
={}&F_0(y')\left[
\frac{\gamma_1^{-1}+\gamma_2^{-1}}{4r^2}(y_n^2-r^4)
+\frac{\gamma_1^{-1}-\gamma_2^{-1}}{2}y_n
\right]\\
&+\frac{y_n^2-r^4}{4r^4}
\left[
\frac{y_n+r^2}{\gamma_1}E_1(y)
+\frac{r^2-y_n}{\gamma_2}E_2(y)
\right],
\end{aligned}
\]
where
\[
F_0(y'):=\frac{\varepsilon+h_1(y')-h_2(y')}{2r^2},
\qquad E_i(y):=F_i(y)-F_0(y'),\quad i=1,2.
\]
Then we can take
\[
c_L(y):=-\frac{a^{nn}(y)F_0(y')}{2r^2}
\left(\frac{1}{\gamma_1}+\frac{1}{\gamma_2}\right).
\]

Another main change occurs in the dimensional-reduction argument of Section \ref{sec4}. When the bonding parameters $\gamma_i$ are nonconstant, $G_1$ contains an extra term
	\begin{equation} \label{extra_G1}
		\Big(\frac{1}{\gamma_1(y')} - \frac{1}{\gamma_1(0')} + \frac{1}{\gamma_2(y')} - \frac{1}{\gamma_2(0')} \Big) \bar{v},
	\end{equation}
	which can be controlled by $$C (\|\nabla \gamma_1 \|_{L^\infty} + \|\nabla \gamma_2 \|_{L^\infty}) |y'| |\bar v|.$$ 
	Since $u$ is odd in $x_j$, so is $\bar v$, and hence $\bar v(0')=0$. Together with the preceding bound {of $\nabla \bar v$}, the extra term \eqref{extra_G1} will serve as a higher-order error  relative to the leading homogeneous part of the equation \eqref{equation_v_bar}.  Consequently, the additional term does not affect the iteration process in the proof of Theorem \ref{thm_upperbound}, and the same estimates remain valid with $\gamma$ replaced by $\bar\gamma$.
	
	\textbf{Case 2: General domains and data with ``nearly constant" bonding parameters.}
	
	In the absence of symmetry assumptions on the solution $u$, we instead suppose that
	$$
	\nabla_{x'} \gamma_1(0', \varepsilon/2) = \nabla_{x'} \gamma_2(0', -\varepsilon/2) = 0.
	$$
	That is, $\gamma_1$ and $\gamma_2$ are ``nearly constant" at the closest points. In this case, the extra term \eqref{extra_G1} can be controlled by
	$$C (\|\nabla^2 \gamma_1 \|_{L^\infty} + \|\nabla^2 \gamma_2 \|_{L^\infty}) |y'|^2 |\bar v|.$$ 
	Even though $\bar v$ may not vanish at $y' = 0'$, the $|y'|^2$ term is sufficient to make \eqref{extra_G1} a higher-order error relative to the leading homogeneous part of the equation \eqref{equation_v_bar}. As a consequence, the conclusions of Theorems \ref{thm_upperbound_3} and \ref{thm_upperbound_2} remain valid with $\gamma$ replaced by $\bar\gamma$.
	
	\textbf{Case 3: General domains, data, and bonding parameters.}
	
	Now we discuss the most general case {without symmetry assumptions}, and $\nabla_{x'} \gamma_1(0', \varepsilon/2)$ and $\nabla_{x'} \gamma_2(0', -\varepsilon/2)$ are {not both zero}. In this case, the extra term \eqref{extra_G1} generally satisfies only the first-order bound
	$$|\eqref{extra_G1}| \le C (\|\nabla \gamma_1 \|_{L^\infty} + \|\nabla \gamma_2 \|_{L^\infty}) |y'| |\bar v|.$$ 
	Moreover, $\bar v$ need not vanish at $y'=0'$. Consequently, when $\bar\gamma$ takes the critical value $2/(n\mu)$, {similar to the $G_2$ term in the symmetric case when $\gamma=1/\mu$,} this extra term has the same order as the leading homogeneous part of \eqref{equation_v_bar}. This would produce an additional logarithmic factor in the iteration process. To summarize, we have the following theorem.
	
	\begin{theorem}
		For $n \ge 3$, $\varepsilon \in (0,R_0/4)$, $\mu > 0$, let $h_1,h_2$ satisfy \eqref{fg_0}, \eqref{fg_2}, and \eqref{fg_3}. For $i=1,2$, let $\gamma_{i}\in C^{1,1}(\partial\cD_i)$ satisfy \eqref{eq9.38}. Let $u$ be the solution of \eqref{robin_variable}. Then there exists a positive universal constant $C$ depending on $\bar{\gamma}$, such that
		\begin{itemize}
			\item when $0 <\bar \gamma < 2/(n\mu)$,
			\begin{equation*}
				|\nabla u(x)| \le C \|\varphi\|_{L^\infty(\partial\Omega)}\quad \mbox{in}~ \Omega_{R_0/2},
			\end{equation*}
			\item when $\bar\gamma = 2/(n\mu)$,
			\begin{equation*}
				|\nabla u(x)| \le C \|\varphi\|_{L^\infty(\partial\Omega)} |\log (\varepsilon+|x'|^2)|\quad \mbox{in}~ \Omega_{R_0/2},
			\end{equation*}
			\item when $\bar\gamma >2/(n\mu)$,
			\begin{equation*}
				|\nabla u(x)| \le C \|\varphi\|_{L^\infty(\partial\Omega)}(\varepsilon + |x'|^2)^{\frac{\alpha_0-1}{2}} \quad \mbox{in}~  \Omega_{R_0/2},
			\end{equation*}
		\end{itemize}
		where $\alpha_0$ is given in \eqref{alpha_n_zero}, with $\gamma$ replaced by $\bar \gamma$.
	\end{theorem}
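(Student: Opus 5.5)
The proof follows the structure of the proof of Theorem~\ref{thm_upperbound_2}, with the modifications indicated in Case~3. We normalize $\mu=1$ and $\|\varphi\|_{L^\infty(\partial\Omega)}=1$.

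\textbf{Preliminary estimates.} First we redo the preliminaries for \eqref{robin_variable}. The maximum principle and the H\"older bounds \eqref{u_C0}--\eqref{u_C1_outside} carry over verbatim. For the local estimate we use Theorem~\ref{thm-1/2} with the modified auxiliary function $\psi$ and the modified coefficient $c_L$ described in Case~1. The oscillation version \eqref{gradient-1/2} then holds because $c_L$ is still negative, of size $\sim r^{-2}$, and Lipschitz at scale $r$. Repeating the argument of Lemma~\ref{lem:u_holder} gives the uniform $C^\beta$ bound for $u_+$ and $\|\nabla g_i\|_{C^\beta}\le C/k_i$.

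\textbf{Reduced equation.} Next we subtract the affine function from \eqref{subtr}, with $\gamma$ replaced by $\bar\gamma$ in the choice of $b'$. We then perform the reduction of Section~\ref{sec4}. The reduced equation reads
\[
\dv((\varepsilon+|y'|^2)\nabla\bar v)-2\bar\gamma^{-1}\bar v=\dv F+G_1+G_2+G_3,
\]
where $G_3$ is the term \eqref{extra_G1}. It obeys $|G_3|\le C|y'|\,|\bar v|$. We decompose $\bar v=\bar v(0)+(\bar v-\bar v(0))$ and split $G_3=G_3^{(0)}+G_3^{(1)}$, where $G_3^{(0)}$ is the part linear in $\bar v(0)$. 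Since $|\bar v(0)|\le1$, we have $|G_3^{(0)}|\le C|y'|$, and $G_3^{(1)}\le C|y'|\,|\bar v-\bar v(0)|$. Given the decay of $\bar v-\bar v(0)$ provided by \eqref{v_bar_L2_sphere_new}, the term $G_3^{(1)}$ is higher order and can be absorbed into $G_1$ at every stage of the iteration. The term $G_3^{(0)}$ plays exactly the role that $G_2$ plays when $\gamma=1$: it is an $O(|y'|)$ source, and it satisfies $\|G_3^{(0)}\|_{\varepsilon,\sigma-1,s}\le C$ whenever $\sigma\le 2s$.

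\textbf{Iteration for $\bar\gamma\neq 2/n$.} We run the bootstrap of Theorem~\ref{thm_upperbound_2}, Case~2, using \eqref{v_bar_L2_sphere_new} with exponents $s_k=1/2-k\sigma/2$ and feeding the improved gradient bounds into \eqref{gradient-1/2}. The source $G_3^{(0)}$ limits the attainable exponent $1+\sigma-2s$ to at most $1$. Whenever $1\neq\widehat\alpha_0$, that is $\bar\gamma\neq2/n$, the iteration ends with the exponent $\min\{\alpha_0,1\}$. For $\bar\gamma<2/n$ this gives $\alpha_0>1$, so the gradient is bounded. For $\bar\gamma>2/n$ it gives the rate $(\varepsilon+|x'|^2)^{(\alpha_0-1)/2}$, and the iteration stops before reaching the $G_3^{(0)}$ threshold, as in Case~2 of Theorem~\ref{thm_upperbound_2}.

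\textbf{Critical case and main obstacle.} At $\bar\gamma=2/n$ we have $\widehat\alpha_0=1$, and the last step is resonant. We need a zero-mode analogue of case~(ii) of Proposition~\ref{prop_grad_v_bar_control}. Lemma~\ref{lemma_v1_zero_mode} gives decay $((\varepsilon+\rho^2)/(\varepsilon+R^2))^{1/2}$ for the zero mode, and Lemma~\ref{lemma_v1} gives decay $(\rho/R)^{1}$ for the higher modes. Together these yield, through \eqref{omega_iteration_zero}, the recursion $\omega_0(\rho)\le C(\rho/R)\omega_0(R)+CR$ for $R\ge\sqrt\varepsilon$, up to constants. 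Summing this recursion dyadically as in case~(ii) gives $\omega_0(R)\le CR|\log R|$ for $R\ge\sqrt\varepsilon$. For $R<\sqrt\varepsilon$ we conclude as in \eqref{w0_3}, which gives $\omega_0(R)\le C\sqrt\varepsilon|\log\varepsilon|$. Inserting these oscillation bounds into \eqref{gradient-1/2} produces the estimate $|\nabla u|\le C|\log(\varepsilon+|x'|^2)|$.

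The main obstacle is controlling the term $r\|\partial_{x_n}u\|_{L^\infty}$ that appears in \eqref{gradient-1/2}. The argument of Lemma~\ref{lem:u_holder} (harmonicity of $\partial_n u$ together with the maximum principle) bounds $\partial_n u$, but the Robin data now involve the nonconstant $\gamma_i$. Controlling this term requires the $C^{1,1}$ regularity of $\gamma_i$ and a uniform bound on $u$, and both are available.
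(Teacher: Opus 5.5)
Your treatment of $\bar\gamma>2/n$ and $\bar\gamma=2/n$ follows the paper's sketch. The term \eqref{extra_G1} is an $O(|y'|)$ source because $\bar v(0)\neq 0$. This forces $\sigma\le 2s$, i.e. $1+\sigma-2s\le 1$. That restriction is harmless when $\alpha_0<1$ and produces the logarithm at $\alpha_0=1$.

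The gap is in the subcritical case. You say the iteration is non-resonant whenever $1\neq\widehat\alpha_0$, ``that is $\bar\gamma\neq 2/n$''. But $\widehat\alpha_0=\min\{\alpha_0,1\}=1$ for \emph{every} $\bar\gamma\le 2/n$, not only at equality. With an $O(|y'|)$ source you are confined to $1+\sigma-2s\le 1=\widehat\alpha_0$. Strict inequality in Proposition \ref{prop_grad_v_bar_control}(iii) gives only the exponent $1+\sigma-2s<1$, which is a blow-up bound. Equality is the resonant case and gives $R\log(R_0/R)$, exactly as at criticality. So for $\bar\gamma<2/n$ your argument proves at best $|\nabla u|\le C|\log(\varepsilon+|x'|^2)|$, not boundedness. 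The information $\alpha_0>1$ is lost because the decay you feed into \eqref{omega_iteration_zero} is capped at exponent $1$ (Lemma \ref{lemma_v1} for the non-radial modes).

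An extra idea is needed. One that works is to cancel the $O(|y'|)$ part rather than carry it.
\begin{itemize}
\item With $\mu=1$, the leading part of $G_3^{(0)}$ is the linear function $\bar v(0)\,\nabla_{y'}(\gamma_1^{-1}+\gamma_2^{-1})(0')\cdot y'$, with an $O(|y'|^2)$ remainder since $\gamma_i\in C^{1,1}$.
\item $\bar v(0)$ equals the average of $u_+$ over $\{x'=0',\,|x_n|<\varepsilon/2\}$ minus $a$, so it does not depend on $b'$.
\item $b'$ enters the linear part of $G_2$ with coefficient $-2(1-\bar\gamma^{-1})$. This is bounded away from zero for $\bar\gamma\le 2/n\le 2/3$.
\end{itemize}
So choose $b'$ in \eqref{subtr} to cancel the whole linear part of $G_2+G_3^{(0)}$. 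That part includes the $\gamma_i^{-1}(0')\nabla g_i(0')$ terms and the $b_n\nabla\gamma_i^{-1}$ terms, so replacing $\gamma$ by $\bar\gamma$ in the old formula for $b'$ is not enough.

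The remaining source is $O(|y'|^{1+\beta})$ plus $G_3^{(1)}$. Since \eqref{def_norm} is a pointwise norm, bound $G_3^{(1)}$ with the current gradient bound, $|\bar v(y')-\bar v(0)|\le C|y'|(\varepsilon+|y'|^2)^{-s}$ for $s<1/2$, and with $|\bar v|\le C$ at $s=1/2$. Do not use the $L^2(\partial B_R)$ estimate \eqref{v_bar_L2_sphere_new} for this. These bounds give $\|G_3^{(1)}\|_{\varepsilon,\sigma-1,s}\le C$ for $\sigma\le 1$. Case 1 of the proof of Theorem \ref{thm_upperbound_2} then goes through and gives boundedness for $\bar\gamma<2/n$. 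As far as I can see it also removes the logarithm at $\bar\gamma=2/n$, which is consistent with the stated upper bound.

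Two minor points:
\begin{itemize}
\item In the critical step, keep the telescoping factor $((\varepsilon+\rho^2)/(\varepsilon+R^2))^{1/2}$ from \eqref{omega_iteration_zero}. Iterating $\omega_0(\rho)\le C(\rho/R)\omega_0(R)+CR$ with $C>1$ would compound.
\item The bound on $\partial_{x_n}u$ is not a real obstacle. It follows exactly as in Lemma \ref{lem:u_holder}, since $\partial_\nu u=\gamma_i^{-1}(g_i-u)$ and $\gamma_i\ge\gamma_0$.
\end{itemize}
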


	{Finally, the flattening map $\Phi^{-1}$ depends only on the geometry. The assumed $C^{1,1}$ regularity of the bonding parameters is used in the Robin transformation and in estimates for the resulting coefficients and source terms.}

	
	\appendix
	
	\section{\texorpdfstring{{The normal-preserving flattening map}}{The normal-preserving flattening map}}\label{sec_appendix}
	In this appendix, we provide the definition of the flattening map introduced in \cites{DYZ23,DongYangZhu26} and related estimates. The flattening map preserves the normal directions on the upper and lower boundaries.
	
Choose $\chi\in C_c^\infty(B_{R_0})$ with $0\le\chi\le1$ and $\chi=1$ on $B_{7R_0/8}$. For $i=1,2$, define $\tilde h_i=\chi h_i$ in $B_{R_0}$ and extend it by zero outside $B_{R_0}$. Then $\tilde h_i\in C^{1,1}(\bR^{n-1})$, with norm controlled by $R_0$ and $\|h_i\|_{C^{1,1}(B_{R_0})}$.
	Recall that for any $x_0 \in \Omega_{R_0/2}$, 
	$$
	Q_{s,t} = \{ y = (y', y_n) \in \bR^n:~ |y'-x_0'| < s,\, |y_n| < t \}.
	$$
	For any $y \in \overline{Q_{2r,r^2}}$, where $r=\frac{1}{4}(\varepsilon+|x'_0|^2)^{\frac{1}{2}}$, we define the map $x = \Phi(y)$ by
	\begin{equation}\label{Phi}
		\left\{
		\begin{aligned}
			x' &= y' - h(y),\\
			x_n &= \frac{1}{2} \Big[\frac{y_n}{r^2} (\varepsilon + h_1(y') - h_2(y')) + h_1(y') +h_2(y') \Big],
		\end{aligned}
		\right.
	\end{equation}
	where
	\begin{equation*}
		h(y) = (y_n - r^2)(y_n + r^2)(\Theta y_n + \Xi),
	\end{equation*}
	$$
	\left\{
	\begin{aligned}
		\Theta &= \frac{1}{8r^6} [\varepsilon + h_1(y') - h_2(y')] \nabla_{y'} [\widehat h_1(y) + \widehat h_2(y)],\\
		\Xi &= \frac{1}{8r^4} [\varepsilon + h_1(y') - h_2(y')] \nabla_{y'} [\widehat h_1(y) - \widehat h_2(y)],
	\end{aligned}
	\right.
	$$
	$\widehat h_1$ and $\widehat h_2$ are mollifications of $\tilde h_1$ and $\tilde h_2$ given by
	\begin{equation}\label{mollification}
		\widehat h_1(y) := \int_{\bR^{n-1}} \tilde h_1(y' - \kappa z') \eta(z') \, dz',\quad \widehat h_2(y) := \int_{\bR^{n-1}} \tilde h_2(y' - \kappa z') \eta(z') \, dz',
	\end{equation}
	where $\eta$ is a nonnegative smooth function with unit integral supported in $B_1 \subset \bR^{n-1}$,
	and 
	$$\kappa = \frac{r^4 - y_n^2}{r}\ge 0.$$
	{Note that for sufficiently small $r$, we have \eqref{Phi} 
$\Phi(Q_{1.9r, r^2})\subset \Omega_{x_0,2r}$ and $\Omega_{x_0,r} \subset \Phi(Q_{1.1r, r^2})$.}
    For $y \in Q_{2r,r^2}$, some key estimates for $\widehat h_1(y)$ and $\widehat h_2(y)$ are
	\begin{equation}\label{est:mollification}
		\begin{aligned}
			&|\nabla_{y'}^k \widehat h_i(y)| \le C r^{2-k} \quad \mbox{for}~k = 0,1,2,\\
			&|\partial_{y_n} \nabla_{y'} \widehat h_i(y)| \le C r,\\
			&|\nabla_{y'}^3 \widehat h_i(y)| \le \frac{Cr}{r^4 - y_n^2},\\
			&|\partial_{y_n}\nabla_{y'}^2 \widehat h_i(y)| \le \frac{Cr^2}{r^4 - y_n^2},\\
			&|\partial_{y_n}^2\nabla_{y'} \widehat h_i(y)| \le \frac{Cr^3}{r^4 - y_n^2}
		\end{aligned}
	\end{equation}
	for $i = 1,2$. We refer to \cite{DYZ23}*{Section 3} for the details. Note that the map $\Phi$ is invertible when $r$ is sufficiently small.
	
	\bibliographystyle{amsplain}

\end{document}